\newcommand{\bB}{{\mathbb{B}}} \newcommand{\BB}{ {\mathbb{B}}}
\newcommand{\bC}{{\mathbb{C}}} \newcommand{\CC}{ {\mathbb{C}}}
\newcommand{\bN}{{\mathbb{N}}} \newcommand{\NN}{ {\mathbb{N}}}
	\newcommand{\mc}[1]{\mathcal{#1}}
  \newcommand{\A}{{\mathcal{A}}}
  \newcommand{\F}{{\mathcal{F}}}
\renewcommand{\H}{{\mathcal{H}}}
  \newcommand{\M}{{\mathcal{M}}}
\renewcommand{\O}{{\mathcal{O}}}
\renewcommand{\S}{{\mathcal{S}}}
  \newcommand{\V}{{\mathcal{V}}}
  \newcommand{\X}{{\mathcal{X}}}
  \newcommand{\Z}{{\mathcal{Z}}}
\newcommand{\frk}[1]{\mathfrak{#1}}
\newcommand{\fa}{{\mathfrak{a}}}
\newcommand{\frb}{{\mathfrak{b}}}
\newcommand{\fc}{{\mathfrak{c}}}
\newcommand{\fH}{{\mathfrak{H}}}
\newcommand{\fK}{{\mathfrak{K}}}
\newcommand{\fM}{{\mathfrak{M}}}
\newcommand{\fm}{\mathfrak{m}}
\newcommand{\fN}{{\mathfrak{N}}}
\newcommand{\fp}{{\mathfrak{p}}}
\newcommand{\fR}{{\mathfrak{R}}}
\newcommand{\fv}{{\mathfrak{v}}}
\newcommand{\fX}{{\mathfrak{X}}}
\newcommand{\rC}{\mathrm{C}}
\newcommand{\eps}{\varepsilon}
\renewcommand{\phi}{\varphi}
\newcommand{\upchi}{{\raise.35ex\hbox{$\chi$}}}
\newcommand{\gra}{\alpha}
 \newcommand{\grD}{\Delta}
\newcommand{\grj}{\theta} 
\newcommand{\grl}{\lambda} \newcommand{\grL}{\Lambda}
\newcommand{\grw}{\omega} \newcommand{\grW}{\Omega}
\newcommand{\grz}{\zeta}
\newcommand{\ol}{\overline} \newcommand{\cc}[1]{\overline{#1}}
\newcommand{\qand}{\quad\text{and}\quad}
\newcommand{\diag}{\operatorname{diag}}
\newcommand{\ran}{\operatorname{ran}}
\newcommand{\re}{\operatorname{Re}}
\newcommand{\spn}{\operatorname{span}}
\newcommand{\fb}{\mathfrak{b}}
\newcommand{\card}{\operatorname{card }}
\newtheorem{lemma}{Lemma}[section]
\newtheorem{theorem}[lemma]{Theorem}
\newtheorem{corollary}[lemma]{Corollary}
\theoremstyle{definition}
\newtheorem{example}{Example}
\newcommand{\bksl}{\backslash}
\newcommand{\pd}{\partial}
\newcommand{\ZBd}{ \mc{Z}_{\mathbb{B}_d} }
\newcommand{\ip}[1]{\langle #1 \rangle}
\newcommand{\Ann}{\mathrm{Ann}}
\newcommand{\PI}{\frk{p}}
\begin{document}
\author{Rapha\"el Clou\^atre}

\address{Department of Mathematics, University of Manitoba, Winnipeg, Manitoba, Canada R3T 2N2}

\email{raphael.clouatre@umanitoba.ca\vspace{-2ex}}
\thanks{The first author was partially supported by an NSERC Discovery Grant.}
\author{Edward J. Timko}
\email{edward.timko@umanitoba.ca\vspace{-2ex}}

\begin{abstract}
We study similarity classes of commuting row contractions annihilated by what we call higher order vanishing ideals of interpolating sequences. Our main result exhibits a Jordan-type direct sum decomposition for these row contractions. We illustrate how the family of ideals to which our theorem applies is very rich, especially in several variables. We also give two applications of the main result. First, we obtain a purely operator theoretic characterization of interpolating sequences. Second, we classify certain classes of cyclic commuting row contractions up to quasi-similarity in terms of their annihilating ideals. This refines some of our recent work on the topic. We show how this classification is sharp: in general quasi-similarity cannot be improved to similarity. The obstruction to doing so is the existence, or lack thereof, of norm-controlled similarities between commuting tuples of nilpotent matrices, and we investigate this question in detail.
\end{abstract}

\title[Row contractions and interpolating vanishing ideals]{Row contractions annihilated by interpolating vanishing ideals}
\date{\today}
\maketitle


\section{Introduction}
Since the appearance of the seminal work of Sz.-Nagy and Foias in the original edition of \cite{nagy2010}, the rich interplay between complex function theory on the unit disc and the theory of Hilbert space contractions has been fruitfully exploited. A wealth of structural results about Hilbert space contractions was uncovered, based on a careful analysis of the compressions of the standard isometric unilateral shift to its coinvariant subspaces. At the root of this strategy is an important fact saying that ``almost coisometric" contractions (i.e pure contractions with one-dimensional defect spaces) are always unitarily equivalent to such compressions. Furthermore, the appropriate coinvariant subspace can be identified explicitly, and it encodes the ideal of holomorphic relations constraining the contraction.

Unfortunately, this condition on the contraction $T$ being almost coisometric is very rigid, and it is desirable to replace it with a more flexible one. A natural replacement is that $T$ should admit a cyclic vector. Perhaps surprisingly, such contractions can still be classified using compressions of the unilateral shift to a coinvariant subspace reflecting the holomorphic constraints satisfied by $T$. The compromise here is that the relationship between $T$ and its classifying model is weaker than unitary equivalence (or even similarity); it is usually referred to as \emph{quasi-similarity}. Despite this apparent weakness, several key pieces of information about $T$ can still be extracted using this scheme. In fact, this approach can be greatly expanded to move past the setting of cyclic vectors, and a genuine analogue of the Jordan canonical form of a matrix can be constructed (see \cite{bercovici1988} for a comprehensive treatment).

A modern trend in operator theory is to make the object of study a $d$-tuple of operators $T=(T_1,\ldots,T_d)$ on some Hilbert space $\fH$. It is natural, then, to aim to reproduce the very successful univariate program to elucidate the properties of \emph{row contractions}, that is $d$-tuples $T$ that are contractive when viewed as row operators from the $d$-fold direct sum $\fH^{(d)}$ to $\fH$. This has been carried out to a great extent by Popescu in a long series of papers starting with \cite{popescu1989},\cite{popescu1991}, where he shows that many aspects of the classical theory have close analogues in the multivariate context, where no commutativity is imposed on the operators $T_1,\ldots,T_d$.  In contrast, the structure of \emph{commuting} row contractions turns out to be more elusive. Nevertheless, building upon the groundwork laid in \cite{muller1993} and \cite{arveson1998}, a coherent theory has emerged in the last two decades, leveraging function theory on the so-called Drury-Arveson space to infer information about general commuting row contractions.

By way of analogy with the familiar univariate setting, it is then natural to wonder whether commuting row contractions can be classified using compressions of the Drury-Arveson shift to coinvariant subspaces. Classification up to unitary equivalence was achieved in \cite{arveson1998}, under the necessary condition that the defect space of the row contraction be one-dimensional. Recently, the authors have showed that the aforementioned more flexible quasi-similarity classification also has a satisfactory multivariate counterpart \cite{CT2018}.

So far, we described two different classifications for commuting row contractions: one up to unitary equivalence which requires strong conditions to be satisfied, and another up to quasi-similarity that is more widely applicable. There is another commonly used equivalence relation on linear operators that we have seemingly overlooked: similarity. The motivation behind this paper is thus the following question: what kind of commuting row contractions can be classified up to similarity using compressions of the Drury-Arveson shift to coinvariant subspaces? 

We note that this question has been considered in the single-variable case in \cite{clouatreSIM1},\cite{clouatreSIM2},\cite{clouatre2015jordan}. Interestingly, there are obstructions to similarity even in otherwise transparent cases. Indeed, for a pure cyclic contraction $T$ annihilated by a Blaschke product $\theta$ with distinct roots, it was shown in \cite{clouatreSIM1} that similarity between $T$ and the standard model operator $S_\theta$ is equivalent to the roots of $\theta$ forming a so-called interpolating sequence. Roughly speaking, the condition on the sequence being interpolating allows for the construction of enough commuting idempotents in the commutant of $T$, which can in turn be used to diagonalize $T$ up to similarity. Achieving diagonalization in our multivariate context is one of the main objectives of this paper, where the natural replacements for Blaschke products are vanishing ideals of zero sets, and germs thereof. In turn, we use the information we obtain on diagonalization to connect the similarity question for commuting row contractions to various function theoretic properties of the zero set. In particular, we characterize the property of a sequence being interpolating in purely operator theoretic terms. As another application, we refine the work done in \cite{CT2018} in some special cases. This refinement, and the limitations of it which we identify, lead us to a careful analysis of similarities between commuting tuples of nilpotent matrices. Controlling the norm of these similarities  is the salient feature of this endeavour, and as we illustrate, this task is much more complicated than what was witnessed in \cite{clouatreSIM1}. 

Let us now turn to describing the structure of the paper. In Section \ref{S:prelim} we introduce the necessary background material and notation, and gather some necessary preliminary results.  In Section \ref{S:ideals}, we study what we call higher order vanishing ideals of interpolating sequences and we show in Theorem \ref{T:CHVI} that this class of ideals is very rich, much more so in fact that its single-variable counterpart. This helps frame the main result of the paper on the existence of Jordan-type decompositions, which we prove in Section \ref{S:AJD}. In simplified terms, our main result reads as follows   (see Theorem \ref{T:Decomposition} for the complete statement). 
\begin{theorem}\label{T:mainA}
	Let $T=(T_1,\dots,T_d)$ be an absolutely continuous, commuting row contraction which is annihilated by all multipliers that vanish on some interpolating sequence $\Lambda\subset \bB_d$, up to some fixed order. Then, for each $\lambda\in \Lambda$ there is a commuting nilpotent $d$-tuple $N^{(\lambda)}$ such that $T$ is jointly similar to the $d$-tuple  $\oplus_{\lambda\in \Lambda}( \lambda I+N^{(\lambda)})$. Furthermore, the polynomials annihilating a given $d$-tuple $N^{(\lambda)}$ depend explicitly on the local behaviour of the annihilating ideal of $T$ at the point $\lambda$.
\end{theorem}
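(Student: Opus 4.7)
The plan is to convert the interpolating property of $\Lambda$ into the existence of a uniformly bounded family of commuting idempotents in the commutant of $T$, and then use these to diagonalize $T$ as a bounded direct sum indexed by $\Lambda$.

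First I would extract from the interpolating hypothesis a family of multipliers $\{h_\lambda\}_{\lambda\in\Lambda}$ in the Drury-Arveson multiplier algebra that behaves like a partition of unity modulo the annihilating ideal. Concretely, I would produce, for each $\lambda\in\Lambda$, a multiplier $h_\lambda$ whose germ at $\lambda$ is $1$ to the prescribed order and whose germ at every other $\mu\in\Lambda$ vanishes to the prescribed order, with the key requirement that $\sup_{\lambda\in\Lambda}\|h_\lambda\|_{\Mult}<\infty$. This is where interpolation enters: starting from the classical interpolating functions separating points of $\Lambda$ (with uniform multiplier bound), one raises them to an appropriate power tied to the fixed order to kill the required jet at competing points, and then corrects them to have the correct jet at $\lambda$ using local polynomial data. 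The results of Section~\ref{S:ideals}, together with Theorem~\ref{T:CHVI}, should provide precisely the stock of multipliers needed.

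Next, I would apply the multiplier functional calculus available for absolutely continuous commuting row contractions to define $E_\lambda=h_\lambda(T)$. The defining properties of $h_\lambda$ force the identities $h_\lambda^2-h_\lambda$ and $h_\lambda h_\mu$ (for $\mu\neq\lambda$) to lie in the higher order vanishing ideal that annihilates $T$, so the $E_\lambda$ are genuine commuting, pairwise orthogonal idempotents in the commutant $\{T\}'$. Similarly, $1-\sum_\lambda h_\lambda$ lies in the annihilating ideal in an appropriate sense, which should give $\sum_\lambda E_\lambda=I$ in the strong operator topology. The uniform multiplier bound, together with contractivity of the functional calculus, yields $\sup_\lambda\|E_\lambda\|<\infty$, and in fact the map $h\mapsto\bigoplus_\lambda E_\lambda h$ is a bounded invertible intertwiner onto the Hilbert-space direct sum $\bigoplus_\lambda \ran E_\lambda$, with bounded inverse given by summation.

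Finally, setting $N^{(\lambda)}_i=(T_i-\lambda_i I)|_{\ran E_\lambda}$, I would observe that on $\ran E_\lambda$ the tuple $T$ is annihilated by every multiplier whose germ at $\lambda$ lies in $\fm_\lambda^n$ (for the fixed order $n$), so in particular by a power of the maximal ideal at $\lambda$; this forces each $N^{(\lambda)}$ to be a commuting nilpotent $d$-tuple whose annihilator is read off from the local behaviour of the annihilating ideal of $T$ at $\lambda$. The joint similarity to $\bigoplus_\lambda(\lambda I+N^{(\lambda)})$ is then the similarity implemented by the bounded invertible map of the previous step.

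I expect the main obstacle to be Step~1, namely the simultaneous achievement of \emph{three} incompatible-looking constraints on the $h_\lambda$: the correct higher order jet at $\lambda$, vanishing to the prescribed order at all other points of $\Lambda$, and a uniform multiplier bound. In the single-variable work \cite{clouatreSIM1} one exploits explicit Blaschke factor constructions that have no straightforward analogue on $\bB_d$, so I anticipate needing the full strength of the structural results for interpolating sequences in the Drury-Arveson setting, and a careful bookkeeping of local polynomial corrections. Controlling the strong convergence (as opposed to mere weak-$*$ convergence) of $\sum_\lambda E_\lambda$ to $I$ is the secondary delicate point, and is what ultimately upgrades quasi-similarity to similarity.
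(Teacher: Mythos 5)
Your overall plan matches the paper's strategy in spirit: build multipliers that act as higher-order partitions of unity on $\Lambda$, apply the weak-$*$ functional calculus to get idempotents $E_\lambda$ in $\{T\}'$, and read off the local nilpotent blocks. The construction of the $h_\lambda$ in Step~1 is essentially what Lemma~\ref{L:GetIdems} in the paper does (the explicit formula $\grj_\grW = 1-(1-\phi_\grW^{\kappa+1})^{\kappa+1}$ applied to bounded interpolating multipliers $\phi_\Omega$), and the identification of $T|_{\ran E_\lambda}$ as $\lambda I+N^{(\lambda)}$ with annihilator coming from the polynomial ideal $\fp(\Ann(T),\lambda)$ is what Theorem~\ref{T:GetNilp} and Lemma~\ref{L:GleasonCondLemma}(iii) accomplish. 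You also correctly anticipate that uniform multiplier norm control is the crux.

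However, there is a genuine gap in the passage from ``uniformly bounded, pairwise orthogonal idempotents $E_\lambda$'' to ``$h\mapsto\bigoplus_\lambda E_\lambda h$ is a bounded invertible intertwiner onto $\bigoplus_\lambda\ran E_\lambda$ with bounded inverse given by summation.'' This is asserted, not proved, and it is the hardest step. Uniform boundedness of the singleton idempotents $E_\lambda$ alone does \emph{not} imply that $\sum_\lambda\|E_\lambda h\|^2\lesssim\|h\|^2$, nor that partial sums $\sum_{\lambda\in F}E_\lambda$ are uniformly bounded over finite $F\subset\Lambda$; that requires the whole Boolean algebra generated by the $E_\lambda$ to be uniformly bounded. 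This is exactly why the paper's Lemma~\ref{L:GetIdems} constructs multipliers $\theta_\Omega$ for \emph{every} subset $\Omega\subset\Lambda$ with $\sup_\Omega\|\theta_\Omega\|<\infty$ (not merely singletons), and why the proof of Theorem~\ref{T:Decomposition} then verifies the Boolean algebra identity
\[
\theta_{\Omega_1}(T)+\theta_{\Omega_2}(T)-2\theta_{\Omega_1}(T)\theta_{\Omega_2}(T)=\theta_{\Omega_1\triangle\Omega_2}(T)
\]
before invoking the nontrivial result (Gifford's Lemma~1.0.3 or Paulsen's Exercise~9.13) that a uniformly bounded Boolean algebra of commuting idempotents can be simultaneously conjugated, by a single invertible $Y$, to genuine pairwise orthogonal self-adjoint projections. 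The similarity $X$ to the direct sum is then built from $Y$, and the boundedness you want falls out of this. Your ``secondary delicate point'' (SOT versus weak-$*$ convergence of $\sum_\lambda E_\lambda$) is a red herring: once the Gifford/Paulsen similarity is in hand, the convergence is automatic. As written, your proof would stall precisely where the paper deploys its heaviest machinery.
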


In the univariate situation, zero sets can be completely understood in terms of Blaschke products. No such tools are available in the multivariate world however. As a replacement tool, we perform a detailed analysis of germs of multipliers and of their polynomial representatives. This information is used crucially in the proof of Theorem \ref{T:mainA}.

The rest of the paper is devoted to two applications of our main result above. First, in Section \ref{S:siminterp}, we apply it to obtain the following purely operator theoretic characterization of interpolating sequences (Theorem \ref{T:siminterpequivalence}). This is a close multivariate analogue of \cite[Theorem 4.4]{clouatreSIM1}. 
Recall that a sequence $(\lambda_n)\subset \bB_d$ is \emph{strongly separated} if there is $\eps>0$ such that for every $n\in \bN$ there is a contractive multiplier $\omega_n$ such that $|\omega_n(\lambda_n)|\geq \eps$ and $\omega_n(\lambda_m)=0$ for every $m\neq n$.

\begin{theorem}\label{T:mainB}
	Let $\Lambda=\{\lambda_n:n\in \bN\}\subset \bB_d$ be a sequence and let $\fa$ denote its vanishing ideal of multipliers. Consider the following statements.
	\begin{enumerate}
		\item[\rm{(i)}] The sequence $\Lambda$ is interpolating.
		\item[\rm{(ii)}] The row contraction $Z^{\frk{a}}$ is similar to $D=\bigoplus_{n=1}^\infty \grl_n$, where $Z^{\frk{a}}$ is the compression of the Arveson $d$-shift to the orthogonal complement of $\fa$.
		\item[\rm{(iii)}] Every absolutely continuous commuting row contraction $T$ annihilated by $\fa$ is similar to $D=\oplus_{n=1}^\infty \lambda_n$. 
		\item[\rm{(iv)}] The sequence $\Lambda$ is strongly separated. 
		\item[\rm{(v)}] The sequence $\Lambda$ is strongly separated by partially isometric multipliers.
	\end{enumerate}
	Then {\rm (i)} $\Leftrightarrow$ {\rm (ii)} $\Leftrightarrow$ {\rm (iii)} $\Rightarrow$ {\rm (iv)} $\Leftrightarrow$ {\rm (v)}.
\end{theorem}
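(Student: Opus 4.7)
The plan is to establish the first three conditions via the cycle (i) $\Rightarrow$ (iii) $\Rightarrow$ (ii) $\Rightarrow$ (i), and then handle (iii) $\Rightarrow$ (iv) and (iv) $\Leftrightarrow$ (v) separately.

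For (i) $\Rightarrow$ (iii), one simply feeds $\fa$, viewed as a first-order vanishing ideal, into Theorem \ref{T:mainA}. The theorem produces a similarity $T \sim \oplus_{\lambda \in \Lambda}(\lambda I + N^{(\lambda)})$; because the local ideal of $\fa$ at each $\lambda$ is the full maximal ideal, the polynomials annihilating $N^{(\lambda)}$ include every polynomial vanishing at the origin, which forces $N^{(\lambda)}=0$. Hence $T$ is similar to a diagonal operator with spectrum in $\Lambda$, which one identifies with $D$. The implication (iii) $\Rightarrow$ (ii) is immediate upon noting that $Z^{\fa}$ is itself an absolutely continuous commuting row contraction annihilated by $\fa$.

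The implication (ii) $\Rightarrow$ (i) is of function-theoretic flavour. The joint eigenvectors of $(Z^{\fa})^{*}$ at the eigenvalues $\overline{\lambda_n}$ are precisely the restrictions to $\fa^\perp$ of the normalized reproducing kernels $\khat_{\lambda_n}$ of the Drury--Arveson space, each simple. A similarity $Z^{\fa} \sim D$ passes to adjoints, so the image under the adjoint similarity of the standard orthonormal basis of $\ell^2$ is a Riesz basis of $\fa^\perp$ consisting of scalar multiples of the $\khat_{\lambda_n}$. By the standard characterization (Marshall--Sundberg in one variable, Aleman--Hartz--McCarthy--Richter in the multivariate setting), a sequence whose normalized reproducing kernels form a Riesz basis in $\H_d$ is necessarily interpolating, giving (i).

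For (iii) $\Rightarrow$ (iv), apply (iii) to $T = Z^{\fa}$ to obtain an intertwiner $X\colon \ell^2 \to \fa^\perp$ with $X D X^{-1} = Z^{\fa}$. The rank-one coordinate projections $P_n$ on $\ell^2$ commute with $D$, so $X P_n X^{-1}$ commute with $Z^{\fa}$ and form a family of uniformly bounded idempotents. Identifying the commutant of $Z^{\fa}$ with $\Mult(\H_d)$ modulo $\fa$, each $X P_n X^{-1}$ is implemented by a multiplier $\omega_n$ satisfying $\omega_n(\lambda_n)=1$, $\omega_n(\lambda_m)=0$ for $m \neq n$, and $\|\omega_n\|_{\Mult} \leq \|X\|\|X^{-1}\|$; rescaling produces the contractive strongly separating family. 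Finally, (v) $\Rightarrow$ (iv) is trivial, and for the converse (iv) $\Rightarrow$ (v) the task is to upgrade each contractive separator $\omega_n$ to a partially isometric multiplier with the same pattern of zeros on $\Lambda$. I expect to achieve this through an inner-outer style factorization in $\Mult(\H_d)$, arranging for the inner factor to absorb the prescribed zeros while becoming partially isometric. This last step is likely to be the most delicate, since multivariate inner-outer theory is substantially more involved than its single-variable counterpart and care is required to ensure that the factorization preserves both the interpolation data and the partial isometry property.
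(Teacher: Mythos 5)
Your cycle $(i)\Rightarrow(iii)\Rightarrow(ii)\Rightarrow(i)$ together with $(iii)\Rightarrow(iv)$ and $(iv)\Leftrightarrow(v)$ matches the logical architecture of the paper's proof, but two of your steps diverge from the paper in ways worth commenting on, and one of them contains a real gap.

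\textbf{A valid alternative route for $(ii)\Rightarrow(i)$.} The paper proves this by observing that any operator $A=\oplus a_n$ commutes with $D$, transporting it through the similarity to an operator commuting with $Z^{\fa}$, and then invoking the commutant lifting theorem for $Z^{\fa}$ to realize $A$ as $\phi(D)$ for some multiplier $\phi$, giving $\phi(\lambda_n)=a_n$. Your argument instead passes to adjoints and observes that a similarity $Z^{\fa}\sim D$ sends the orthonormal basis $\{e_n\}$ of $\ell^2$ to a Riesz basis of $\H_{\fa}$ consisting (up to scalars) of the kernels $k_{\lambda_n}$, and then invokes the fact that a sequence whose normalized kernels form a Riesz system is interpolating. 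This is correct and is essentially the route the paper alludes to when it remarks that ``the equivalence of (i) and (ii) is already apparent from results in \cite[Ch. 9]{agler2002}''. Both approaches work; the commutant-lifting route is more self-contained within the paper's toolbox, while yours is faster if one takes the Riesz-system characterization as given. Either way, you should record that the joint eigenspace of $(Z^{\fa})^*$ at $\ol{\lambda_n}$ is one-dimensional (this follows from the similarity with $D^*$), which is what pins down $X^*e_n$ as a scalar multiple of $k_{\lambda_n}$.

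\textbf{The genuine gap: $(iv)\Leftrightarrow(v)$.} Your proposed ``inner-outer style factorization'' is not the mechanism that works here, and the claim that $(v)\Rightarrow(iv)$ is trivial is incorrect. In this paper ``inner'' means a row-valued multiplier $\Omega_n:\bB_d\to B(\fH_n,\bC)$ for which $M_{\Omega_n}$ is a partial isometry; the space $\fH_n$ is typically infinite-dimensional. Thus $(v)\Rightarrow(iv)$ requires producing a \emph{scalar} contractive multiplier $\omega_n$ with $|\omega_n(\lambda_n)|\geq\eps$ from the row $\Omega_n$ with $\|\Omega_n(\lambda_n)\|\geq\eps$; this is done by truncating the row, choosing coefficients $(c_k)$ with $\sum|c_k|^2=1$ that align the values $\Omega_n(\lambda_n)$, and taking $\omega_n=\sum c_k(\Omega_n)_k$, which remains contractive because it is a contraction applied to the row (Theorem \ref{T:SSS}). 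For $(iv)\Rightarrow(v)$, the tool is not classical inner-outer factorization (which is unavailable in $H^2_d$) but rather the Beurling-type theorem \ref{T:Beurling} to obtain an inner multiplier $\Omega_n$ with $\ran M_{\Omega_n}=[\fv_0(\Lambda\setminus\{\lambda_n\})H^2_d]$, combined with the operator-inequality factorization $M_{\omega_n}M_{\omega_n}^*\leq M_{\Omega_n}M_{\Omega_n}^*\ \Rightarrow\ \omega_n=\Omega_n\Theta_n$ with $\Theta_n$ contractive \cite[Thm.\ 1.10]{MT2012}; this yields $\|\Omega_n(\lambda_n)\|\geq|\omega_n(\lambda_n)|\geq\eps$ while the vanishing pattern $\Omega_n(\lambda_m)=0$ for $m\ne n$ is inherited from the membership of the columns of $\Omega_n$ in $\fv_0(\Lambda\setminus\{\lambda_n\})$ (Lemma \ref{L:gen}). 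Without identifying these precise tools, your sketch does not prove the equivalence.

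\end{document}
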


In one variable, the preceding statements are all equivalent, as a consequence of Carleson's classical characterization of interpolating sequences \cite[Chapter 9]{agler2002}. In several variables, the equivalence of (iv) and (v) appears to be new.
We also mention that the equivalence of (i) and (ii) is already apparent from results in \cite[Ch. 9]{agler2002}.

Our second application of Theorem \ref{T:mainA}  is a classification of certain pairs of commuting row contractions using their annihilating ideals. The following result (Theorem \ref{T:QSim}) gives a two-sided improvement of \cite[Corollary 3.7]{CT2018} in our special case of interest.
Specifically, this result says that $S$ and $T$ are \emph{quasi-similar}, which is a two-sided version of the notion of \emph{quasi-affine transform} that appears in \cite[Corollary 3.7]{CT2018}. 

\begin{theorem}\label{T:mainC}
	Let $T=(T_1,\cdots,T_d)$ and $S=(S_1,\cdots,S_d)$ be absolutely continuous, cyclic, commuting row contractions with common annihilating ideal $\frk{a}$.
	Assume that $\frk{a}$ contains some higher order vanishing ideal of an interpolating sequence.
	Then, there are injective operators $X$ and $Y$ with dense range such that $XT_k=S_kX$ and $YS_k=T_k Y$ for $1\leq k\leq d$.
\end{theorem}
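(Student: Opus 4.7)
The plan is to exploit the Jordan-type decomposition of Theorem \ref{T:mainA} for both $T$ and $S$, match up the local nilpotent blocks using cyclicity, and then glue the resulting block-wise isomorphisms into bounded injective intertwiners with dense range via a rescaling argument.

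First I would apply Theorem \ref{T:mainA} to both tuples to obtain invertible operators $R_T$ and $R_S$ which jointly similarity-conjugate $T$ and $S$ to
\[
\widetilde T := \bigoplus_{\lambda\in\Lambda}(\lambda I + N_T^{(\lambda)}) \qquad\text{and}\qquad \widetilde S := \bigoplus_{\lambda\in\Lambda}(\lambda I + N_S^{(\lambda)}),
\]
where each $N_{\bullet}^{(\lambda)}$ is a commuting nilpotent $d$-tuple on a finite-dimensional space $\H^{(\lambda)}$ or $\K^{(\lambda)}$. Since $T$ and $S$ share the annihilator $\frk{a}$, their local behaviour at each $\lambda$ coincides, and the ``furthermore'' clause of Theorem \ref{T:mainA} says that the polynomial annihilators of $N_T^{(\lambda)}$ and $N_S^{(\lambda)}$ agree; call this common ideal $\frk{b}_\lambda\subset\CC[z_1,\dots,z_d]$.

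Next I would transfer cyclicity to each block and construct block-wise similarities. If $v$ is cyclic for $T$, then $R_T v = (v_\lambda)_\lambda$ is cyclic for $\widetilde T$; applying the bounded coordinate projection onto the finite-dimensional space $\H^{(\lambda)}$ to the dense span of $\{p(\widetilde T) R_T v : p\in\CC[z_1,\dots,z_d]\}$ shows that $v_\lambda$ is cyclic for $\lambda I + N_T^{(\lambda)}$, and hence for $N_T^{(\lambda)}$; the same applies to $S$. For a cyclic commuting $d$-tuple $N$ on a finite-dimensional space with polynomial annihilator $\frk{b}_\lambda$, the map $p\mapsto p(N)v_\lambda$ descends to an isomorphism of $\CC[z_1,\dots,z_d]/\frk{b}_\lambda$ onto that space. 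Composing the isomorphisms obtained for $T$ and $S$ yields, for each $\lambda$, an invertible operator $X^{(\lambda)}:\H^{(\lambda)}\to\K^{(\lambda)}$ that intertwines $\lambda I + N_T^{(\lambda)}$ with $\lambda I + N_S^{(\lambda)}$.

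The principal obstacle is the last step of globally gluing these block-wise isomorphisms. The norms $\|X^{(\lambda)}\|$ and $\|(X^{(\lambda)})^{-1}\|$ need not be uniformly bounded in $\lambda$, so a plain direct sum would fail to be bounded; this is precisely why the conclusion is quasi-similarity rather than similarity, and anticipates the later discussion in the paper about norm-controlled similarities. To circumvent this, I would pick positive weights $c_\lambda, d_\lambda$ decaying rapidly enough that $\widetilde X := \bigoplus_\lambda c_\lambda X^{(\lambda)}$ and $\widetilde Y := \bigoplus_\lambda d_\lambda (X^{(\lambda)})^{-1}$ define bounded operators. Each summand remains an isomorphism of finite-dimensional spaces, so $\widetilde X$ and $\widetilde Y$ are injective and their ranges contain the algebraic direct sums of the blocks, hence are dense. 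Setting $X := R_S^{-1}\widetilde X R_T$ and $Y := R_T^{-1}\widetilde Y R_S$ transfers the intertwining, injectivity, and dense-range properties back to the original Hilbert spaces, yielding operators satisfying $XT_k = S_k X$ and $YS_k = T_k Y$ for $1\le k\le d$, as required.
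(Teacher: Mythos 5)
Your proposal is correct and follows essentially the same route as the paper's proof of Theorem~\ref{T:QSim}: decompose both tuples via Theorem~\ref{T:Decomposition}, observe that the polynomial ideals at each $\lambda$ agree so that the local cyclic nilpotent blocks are similar (the paper routes this through $Z^{\frk{a}_z}$ via Lemma~\ref{L:NilSimModel}, you route it through the quotient $\CC[x]/\frk{b}_\lambda$, but these are the same isomorphism), and then rescale the block-wise similarities to produce bounded injective dense-range intertwiners, which is precisely Lemma~\ref{L:SimSeqQSim} with the specific weights $c_\lambda=1/\|X^{(\lambda)}\|$ and $d_\lambda=1/\|(X^{(\lambda)})^{-1}\|$.
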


We show in Example \ref{E:SimTrouble} that the previous theorem is sharp in the sense that quasi-similarity typically cannot be improved to similarity. Perhaps surprisingly, the obstruction lies in the structure of similarity classes of commuting tuples of nilpotent matrices. Elucidating this structure is a classical and notoriously difficult problem (see for instance \cite{GP1969},\cite{friedland1983}). This difficulty stands in sharp contrast with the case of a single cyclic nilpotent matrix, which of course is always similar to a Jordan block of appropriate size. Our point of view here is different however. In our case of interest, the existence of a similarity is easily established; it is the \emph{size} of this similarity that is crucial. In Section \ref{S:nilpotent}, we tackle this problem and obtain necessary and sufficient condition for the existence of norm-controlled similarities between a given nilpotent tuple and the corresponding functional model for a homogeneous annihilating ideal (see Theorems \ref{T:simnecessary} and \ref{T:sim}).

\section{Preliminaries}\label{S:prelim}
Throughout the paper, we fix a positive integer $d\geq 1$. We let $\fH$ denote a complex Hilbert space and $B(\fH)$ will denote the $\rC^*$-algebra of bounded linear operators on it. Likewise, we will denote by $B(\fH,\fK)$ the Banach space of bounded linear operators from $\fH$ into another Hilbert space $\fK$.  Given a subset $\S\subset B(\fH)$, we denote its commutant by $\S'$. We also set
\[
[\S\fH]=\ran \S=\ol{\spn\{A\xi:A\in \S,\xi\in \fH\}}.
\]
A $d$-tuple $T=(T_1,\ldots,T_d)$ of operators on $\fH$ is said to be \emph{cyclic} if there is a vector $\xi$ such that $[\A_T \xi]=\fH$, where $\A_T$ denotes the unital operator algebra generated by $T_1,\ldots,T_d$.
Given $z=(z_1,\cdots,z_d)\in\CC^d$ and $A\in B(\fH)$, we put
\[ z A=(z_1A,\cdots,z_dA). \]
Likewise, we set
\[ AT=(AT_1,\cdots,AT_d), \quad TA=(T_1A,\cdots,T_dA). \]
In particular, given another $d$-tuple $S=(S_1,\ldots,S_d)$ acting on some Hilbert space $\fK$, we say that $S$ and $T$ are \emph{quasi-similar} if there are injective bounded linear operators 
\[
X:\fH\to \fK, \quad Y:\fK\to \fH
\]
with dense ranges such that $XT=SX$ and $YS=TY$. If either $X$ or $Y$ is also surjective, then $S$ and $T$ are \emph{similar}.

\subsection{The Drury-Arveson space and interpolating sequences}\label{SS:DA}
Let $\bB_d\subset \bC^d$ denote the open unit ball. Define
\[
k(z,w)=\frac{1}{1-\langle z,w \rangle_{\bC^d}}, \quad z,w\in \bB_d.
\]
The associated reproducing kernel Hilbert space on $\bB_d$ is called the \emph{Drury-Arveson space} and it is denoted by $H^2_d$. We will encounter vector valued versions of this space as well, which we identify with $H^2_d\otimes \fH$, for some Hilbert space $\fH$. Given two Hilbert spaces $\fH$ and $\fK$, a function 
\[
\Phi:\bB_d\to B(\fH,\fK)
\]
 is a \emph{multiplier} if $\Phi f\in H^2_d\otimes \fK $ for every $f\in H^2_d\otimes \fH$. 
  
 Every multiplier $\Phi$ gives rise to a multiplication operator 
 \[
 M_\Phi: H^2_d\otimes\fH\to H^2_d\otimes\fK.
 \]
 Using this identification with $\fH=\fK=\bC$, we can view the algebra of multipliers as a weak-$*$ closed subalgebra $\M_d\subset B(H^2_d)$. One important property of $\M_d$ is that it coincides with its commutant, $\M_d=\M_d'$. We will often go back and forth between the interpretation of a multiplier as a function and as a multiplication operator. In particular, this identification allows us to define the \emph{multiplier norm} as
\[
\|\Phi\|=\|M_\Phi\|_{B(H^2_d\otimes\fH,H^2_d\otimes\fK)}
\]
for every multiplier $\Phi$. While the inequality
\[
\sup_{z\in \bB_d}\|\Phi(z)\|\leq \|\Phi\|
\]
holds for every multiplier $\Phi$, the two norms are not comparable in general. 

A multiplier $\Phi$ is \emph{inner} if $M_\Phi$ is a partial isometry. Much like in Beurling's classical description of invariant subspaces for the unilateral shift on the Hardy space of the unit disc, inner multipliers and ideals in $\M_d$ are connected with multiplier invariant subspaces in the Drury-Arveson space. We summarize the main features that we will need in the following theorem. \begin{theorem}\label{T:Beurling}
The following statements hold.
\begin{enumerate}
\item[\rm{(i)}] Let $\fN\subset H^2_d$ be a closed subspace which is invariant for $\M_d$. Then, there is a separable Hilbert space $\fH$ and an inner multiplier $\Omega:\bB_d\to B(\fH,\bC)$ such that $\fN=M_{\Omega} H^2_d(\fH)$.

\item[\rm{(ii)}] Let $\fa,\fb\subset \M_d$ be two weak-$*$ closed ideals. Then, $\fa=\fb$ if and only if $[\fa H^2_d]=[\fb H^2_d]$.
\end{enumerate}
\end{theorem}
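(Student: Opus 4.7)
For part (i), the plan is to invoke the Beurling--Lax--Halmos factorization theorem for the Drury--Arveson space, due to McCullough--Trent, which rests on the complete Nevanlinna--Pick property of $H^2_d$. Concretely, I would identify the wandering subspace
\[
\fH := \fN \ominus \bigl[M_{z_1}\fN + \cdots + M_{z_d}\fN\bigr]
\]
and construct a coefficient map $V : H^2_d \otimes \fH \to \fN$ built from the $M_z$-orbit of $\fH$, with the normalization dictated by the reproducing kernel of $H^2_d$. The complete Pick property forces $V$ to extend to a partial isometry intertwining the shift actions on source and target; the identity $\M_d = \M_d'$ recorded above then realizes $V$ as a multiplication operator $M_\Omega$ for an inner multiplier $\Omega : \bB_d \to B(\fH, \bC)$ with range $\fN$.

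For part (ii), the forward implication is immediate. For the converse, set $\fN := [\fa H^2_d] = [\fb H^2_d]$, and consider the weak-$*$ closed ideal
\[
\fa_0 := \{\phi \in \M_d : M_\phi H^2_d \subset \fN\}.
\]
The plan is to show $\fa = \fa_0$ (and by symmetry $\fb = \fa_0$), which gives the conclusion. The inclusion $\fa \subset \fa_0$ is straightforward: for $\phi \in \fa$ and any polynomial $p$, $\phi p \in \fa \subset \fN$, and density of polynomials in $H^2_d$ together with closedness of $\fN$ yields $M_\phi H^2_d \subset \fN$. For the reverse inclusion $\fa_0 \subset \fa$, I would use part (i) to write $\fN = M_\Omega H^2_d(\fH)$, then apply a Leech-type factorization (available on the complete Pick space $H^2_d$) to produce, for each $\phi \in \fa_0$, a representation $\phi = \Omega \Psi$ for some multiplier $\Psi : \bC \to \fH$. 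Approximating $\Psi$ weak-$*$ by $\fH$-valued polynomials and exploiting the ideal structure of $\fa$ together with its weak-$*$ closedness then places $\phi$ in $\fa$.

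The main obstacle I anticipate is the Leech-type factorization in (ii). The natural candidate $M_\Omega^* M_\phi$ intertwines $M_z^*$ with itself, and only the non-trivial commutant-lifting phenomenon on complete Pick spaces guarantees that it is in fact realized by a genuine multiplier $\Psi$. A further subtlety lies in passing from a weak-$*$ approximation of $\Psi$ by $\fH$-valued polynomials to a weak-$*$ approximation of $\phi = \Omega \Psi$ by elements of $\fa$, which ultimately rests on the fact that $\fa$ generates $\fN$ in the $H^2_d$-sense and on a careful use of weak-$*$ continuity of the map $\Psi \mapsto \Omega \Psi$.
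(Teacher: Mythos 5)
The paper does not prove this theorem: it simply cites \cite{MT2000} (McCullough--Trent) for part (i) and \cite{DRS2015} (Davidson--Ramsey--Shalit) for part (ii). You are attempting a self-contained proof, which is a genuinely different thing to evaluate.

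For part (i), your sketch (wandering subspace, orbit map, partial isometry via the complete Pick property, realization as $M_\Omega$ via $\M_d=\M_d'$) is essentially a retelling of the McCullough--Trent argument, so it is in harmony with the citation.

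For part (ii), however, there is a genuine gap at the last step. Your plan is: set $\fN=[\fa H^2_d]$, define $\fa_0=\{\phi\in\M_d : M_\phi H^2_d\subset\fN\}$, show $\fa\subset\fa_0$ (fine), then show $\fa_0\subset\fa$ by Leech-factoring $\phi=\Omega\Psi$ and approximating $\Psi$ weak-$*$ by $\fH$-valued polynomials $\Psi_n$. If $\Psi_n=\sum_{j=1}^{m_n}p_{n,j}e_j$, then $\Omega\Psi_n=\sum_{j=1}^{m_n}\omega_j\,p_{n,j}$, where $\omega_j$ are the column entries of $\Omega$. To conclude $\Omega\Psi_n\in\fa$ and then $\phi\in\fa$ by weak-$*$ closedness, you need each $\omega_j\in\fa$. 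But all that is available to you a priori is $\omega_j\in\fN$ as a vector, equivalently $\omega_j\in\fa_0$ as a multiplier (since $\omega_j H^2_d\subset\ran M_\Omega=\fN$). The implication ``$\omega_j\in\fa_0\Rightarrow\omega_j\in\fa$'' is exactly the inclusion $\fa_0\subset\fa$ you set out to prove, so the argument is circular. Put differently, the Leech step only shows that $\fa_0$ equals the weak-$*$ closed ideal generated by the columns of $\Omega$, which is a reformulation rather than a proof. (This is also why the paper's later Lemma \ref{L:gen}, which asserts precisely that the columns generate $\fa$, is \emph{deduced from} Theorem \ref{T:Beurling}(ii) and cannot be fed back into its proof.) Replacing $\Omega$ by a row built from elements of $\fa$ does not salvage the argument, because one then loses the partial-isometry property on which the Leech/Toeplitz-corona step rests. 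To close the gap you would need an independent argument that a multiplier lying in $[\fa H^2_d]$ (in the Hilbert-space sense) already lies in the weak-$*$ closed ideal $\fa$, which is the actual content of the cited DRS result.
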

\begin{proof}
(i) is \cite[Theorem 0.7]{MT2000} (see also \cite[Section 2]{arveson2002}), while (ii) is  \cite[Theorem 2.4]{DRS2015}.
\end{proof}

Standard calculations reveal that the coordinate functions $x_1,x_2,\ldots,x_d$ are multipliers of $H^2_d$, and that 
\[
M_{x_1}M_{x_1}^*+M_{x_2}M_{x_2}^*+\ldots+M_{x_d}M_{x_d}^*\leq I.
\]
In particular, the polynomials $\bC[x_1,\ldots,x_d]$ form a subset of $\M_d$, the norm closure of which we denote by $\A_d$. For every $w\in \bB_d$, there is a biholomorphic automorphism $\Gamma_w:\bB_d\to \bB_d$ such that $\Gamma_w(0)=w$ and 
\[
(\Gamma_w\circ \Gamma_w)(z)=\Gamma_0(z)=z, \quad z\in \bB_d.
\]
See \cite[Section 2.2]{rudin2008} for more details.
By \cite[Theorem 9.2]{davidsonramseyshalit2011} (see also \cite[Theorem 11.1.1]{shalit2014}), for every automorphism $\Gamma:\bB_d\to\bB_d$ there is a unitary $U_\Gamma\in B(H^2_d)$ such that 
\[
U_\Gamma \A_d U_\Gamma^*=\A_d
\]
and
\[
U_\Gamma M_\phi U_\Gamma^* =M_{\phi\circ \Gamma}.
\]
In particular, if we write
\[
\Gamma=(\gamma_1,\ldots,\gamma_d)
\]
where $\gamma_k:\bB_d\to \bC$ for $1\leq k\leq d$, then
\[
U_\Gamma M_{x_k} U_\Gamma^*=M_{\gamma_k}
\]
so that $\gamma_k\in \A_d$ and
\[
M_{\gamma_1}M_{\gamma_1}^*+M_{\gamma_2}M_{\gamma_2}^*+\ldots+M_{\gamma_d}M_{\gamma_d}^*\leq I.
\]
Given $\alpha=(\alpha_1,\alpha_2,\ldots,\alpha_d)\in \bN^d$, we put
\[
|\alpha|=\alpha_1+\alpha_2+\ldots+\alpha_d \qand \alpha!=\alpha_1!\alpha_2!\ldots\alpha_d!
\]
and we use the standard multi-index notations
\[
x^\alpha=x_1^{\alpha_1} x_2^{\alpha_2} \cdots x_d^{\alpha_d} \qand \frac{\pd^\alpha}{\pd x^\alpha}=\frac{\pd^{\alpha_1}}{\pd x_1^{\alpha_1}}\frac{\pd^{\alpha_2}}{\pd x_2^{\alpha_2}}\cdots \frac{\pd^{\alpha_d}}{\pd x_d^{\alpha_d}}.
\]
It can be verified that
\[
\|x^\alpha\|_{H^2_d}=\left(\frac{\alpha!}{|\alpha|!}\right)^{1/2}
\]
for every $\alpha\in \bN^d$.
Elements of $H^2_d$ are analytic functions $f:\bB_d\to\bC$ with the property that
\[
f(z)=\langle f, k_z \rangle_{H^2_d}, \quad z\in \bB_d
\]
where $k_z=k(\cdot,z)\in H^2_d$.  Now, for every $z\in \bB_d$ and $\alpha\in \bN^d$ we see that
\[
\frac{\pd^\alpha}{\pd \cc{z}^\alpha}\left(\frac{1}{1-\ip{x,z}}\right)=\frac{|\gra|!  x^\gra}{(1-\ip{x,z})^{|\gra|+1}}, \quad x\in \bB_d
\]
whence
\[
 \frac{\pd^\alpha k_z}{\pd \ol{z}^\alpha}\in H^2_d
\]
and
\begin{equation}\label{Eq:derivative}
\frac{\pd^\alpha f}{\pd x^\alpha}(z)=\left\langle f, \frac{\pd^\alpha k_z}{\pd \ol{z}^\alpha }\right\rangle_{H^2_d}
\end{equation}
for every $f\in H^2_d$. The reader should consult \cite{agler2002} for more details on these topics.

One basic property of the multiplier algebra $\M_d$ that we will need repeatedly is that it admits a solution to the so-called ``Gleason problem", as the following result shows.  

\begin{theorem}
	Let $\phi\in\mc{M}_d$ and let $z\in\BB_d$. Let $N\geq 1$ be an integer. For each $\alpha\in \bN^d$ such that $|\alpha|=N$ there is $\psi_\alpha\in \M_d$ with the property that
	\[ \phi=\sum_{|\alpha|<N}\frac{1}{\alpha!}\frac{\pd^\alpha \phi}{\pd x^\alpha}(z)(x-z)^\alpha+\sum_{|\alpha|=N}(x-z)^\alpha \psi_\alpha. \]
	\label{T:GleasonsTrick}
\end{theorem}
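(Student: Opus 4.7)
The plan is to argue by induction on $N$. For the inductive step, given the order-$N$ expansion, I would apply the base case $N=1$ to each multiplier coefficient $\psi_\alpha$ with $|\alpha|=N$, writing $\psi_\alpha(x) = \psi_\alpha(z) + \sum_{k=1}^d (x_k-z_k)\eta_{\alpha,k}(x)$ for some $\eta_{\alpha,k}\in\M_d$. Substituting back and regrouping terms by total degree in $(x-z)$ yields an expansion of order $N+1$, and the scalar coefficients $\psi_\alpha(z)$ that appear in it are forced to equal $\frac{1}{\alpha!}\frac{\pd^\alpha\phi}{\pd x^\alpha}(z)$ by the uniqueness of the holomorphic Taylor coefficients of $\phi$ at $z$. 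The whole argument therefore reduces to the base case $N=1$.

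For $N=1$, I would first treat $z=0$ by exploiting the distinctive Drury--Arveson identity $I - \sum_k M_{x_k}M_{x_k}^* = P_{\bC\cdot 1}$. When $\phi\in\M_d$ satisfies $\phi(0)=0$, one has $P_{\bC\cdot 1}M_\phi = 0$, and therefore $M_\phi = \sum_k M_{x_k}(M_{x_k}^* M_\phi)$. Applied to the constant function $1\in H^2_d$ this yields the pointwise identity $\phi = \sum_k x_k \psi_k$ with $\psi_k = M_{x_k}^*\phi \in H^2_d$. The subtle step is upgrading these $\psi_k$ from mere $H^2_d$-functions to honest multipliers. One natural route is via the explicit formula $\psi_k(x) = \int_0^1(\pd_k\phi)(tx)\,dt$, easily verified for polynomials and extended to general $\phi\in\M_d$ by combining the vanishing of $\phi$ at the origin with the weak-$*$ contractivity on $\M_d$ of the composition operators $\phi\mapsto\phi(t\,\cdot)$ for $t\in[0,1]$ (these being implemented by analytic self-maps of $\bB_d$). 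Alternatively, one can invoke Theorem~\ref{T:Beurling}(ii) to identify the weak-$*$ closure of the algebraic ideal $\sum_k x_k\M_d$ with $\{\phi\in\M_d:\phi(0)=0\}$, and then verify via a closed-range / commutant lifting argument for the row operator $[M_{x_1},\dots,M_{x_d}]$ on $\M_d^d$ that this sum is already closed.

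To move from $z=0$ to arbitrary $z\in\bB_d$, I would use the biholomorphic involution $\Gamma=\Gamma_z$ with $\Gamma(0)=z$ and the induced unitary $U_\Gamma$ recalled in the preliminaries. Applied to $\tilde\phi = \phi\circ\Gamma\in\M_d$, which satisfies $\tilde\phi(0)=\phi(z)$, the $z=0$ case gives $\tilde\phi - \phi(z) = \sum_k x_k\chi_k$ with $\chi_k\in\M_d$; composing back with $\Gamma$ and using $\Gamma\circ\Gamma=\mathrm{id}$ yields $\phi(x) - \phi(z) = \sum_k \gamma_k(x)(\chi_k\circ\Gamma)(x)$, where $\gamma_k$ is the $k$-th component of $\Gamma$. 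The crucial extra observation is that $\Gamma_z$ has the explicit rational form $\Gamma_z(x) = -A(x-z)/(1-\langle x,z\rangle)$ for an invertible linear operator $A$ on $\bC^d$ (namely $A = P_z + \sqrt{1-|z|^2}\,Q_z$, with $P_z$ the projection onto $\bC z$ and $Q_z = I - P_z$), and that the Szeg\H{o} kernel $k_z(x) = 1/(1-\langle x,z\rangle) = \sum_{n\geq 0}\langle x,z\rangle^n$ is itself a multiplier: indeed $\|M_{\langle x,z\rangle}\|\leq|z|$ by the row contractivity $\sum_k M_{x_k}M_{x_k}^*\leq I$, which forces the geometric series to converge in operator norm. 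Consequently each $\gamma_k$ factors as $\sum_j(x_j-z_j)\mu_{kj}(x)$ with $\mu_{kj}\in\M_d$, and substituting into the previous identity gives the sought decomposition.

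The principal obstacle is thus the base case $N=1$ at $z=0$: extracting multiplier factors from the operator identity $M_\phi = \sum_k M_{x_k}A_k$. The na\"ive choice $A_k = M_{x_k}^*M_\phi$ coming from the projection identity above does \emph{not} commute with the shifts $M_{x_j}$ (indeed, $[M_{x_k}^*, M_{x_j}]\cdot 1 = \delta_{kj}\neq 0$ already on the constant function), so the $A_k$ are not a priori multipliers. Resolving this is precisely what the integral formula, or the Beurling / closed-range argument, is designed to accomplish, and it is where the specific Drury--Arveson structure is used most crucially.
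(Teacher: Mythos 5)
The paper's own ``proof'' is a citation to \cite{gleason2005}, so you are attempting a self-contained derivation. Your outer scaffolding is sound: the induction on $N$ reduces correctly to the base case $N=1$ (uniqueness of Taylor coefficients forces the scalars, and regrouping by total degree produces the next order), and your reduction from general $z$ to $z=0$ via the involution $\Gamma_z$ is complete and correct, including the key observation that each component $\gamma_k$ factors through $(x_j-z_j)$ with multiplier coefficients because the Cayley-type denominator $1/(1-\langle x,z\rangle)$ lies in $\M_d$ (the geometric series for $M_{\langle\cdot,z\rangle}$ converges in operator norm since $\|M_{\langle\cdot,z\rangle}\|\leq|z|<1$).

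The gap is in the base case $N=1$, $z=0$, and you in fact flag the right difficulty (``upgrading these $\psi_k$ from mere $H^2_d$-functions to honest multipliers'') but neither of your proposed resolutions is carried through. The integral-formula route does not work as sketched: $\pd_k\phi$ is not a multiplier, the weak-$*$ contractivity of $\psi\mapsto\psi(t\cdot)$ does not apply to it, and the coefficientwise formula $\psi_k=\sum_\alpha a_\alpha(\alpha_k/|\alpha|)x^{\alpha-e_k}$ carries no visible control of $\|\psi_k\|_{\M_d}$ in terms of $\|\phi\|_{\M_d}$ --- the dilation trick would require precisely the uniform bound you are trying to prove, so the argument is circular. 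The Beurling/closed-range route is only named; showing that $\sum_k x_k\M_d$ is already weak-$*$ closed is exactly the content of the theorem, not a reduction of it. The efficient way to finish is the Leech/Toeplitz-corona factorization for the complete Nevanlinna-Pick kernel of $H^2_d$, which the paper itself invokes elsewhere (Theorem~1.10 of \cite{MT2012}, used in the proof of Theorem~\ref{T:SSS}): since $\phi(0)=0$ gives $P_{\bC\cdot1}M_\phi=0$, one has
\[
M_\phi M_\phi^* \;=\;(I-P_{\bC\cdot1})M_\phi M_\phi^*(I-P_{\bC\cdot1})\;\leq\;\|\phi\|^2\,(I-P_{\bC\cdot1})\;=\;\|\phi\|^2\sum_{k=1}^d M_{x_k}M_{x_k}^*,
\]
and the factorization theorem then produces a multiplier column $\Psi=(\psi_1,\dots,\psi_d)$ of multiplier norm at most $\|\phi\|$ with $\phi=\sum_k x_k\psi_k$. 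With this substituted for your base case, the rest of your argument goes through.
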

\begin{proof}
This can easily be inferred from \cite[Cor. 4.2]{gleason2005}.
\end{proof}

Let $\grL=\{\lambda_n:n\geq 1\}$ be a countable subset of $\BB_d$. Much of the developments in the paper are based on an analysis of the restrictions of multipliers to $\Lambda$, through the following properties. We say that $\Lambda$ is
\begin{enumerate}

\item \textit{separated} if for every $n\geq 1$ there is $\phi_n\in \M_d$ such that $\phi_n(\lambda_n)=1$ and
\[
\phi_n(\lambda_m)=0, \quad m\neq n,
\]

\item  \textit{strongly separated} if there is $\eps>0$ such that for every $n\geq 1$ there is $\omega_n\in \M_d$ with $\|\omega_n\|_{\M_d}\leq 1$ such that $|\omega_n(\lambda_n)|\geq \eps$ and
\[
\omega_n(\lambda_m)=0, \quad m\neq n.
\]

\item  \textit{strongly separated by inner multipliers} if there is $\eps>0$ such that for every $n\geq 1$ there is a Hilbert space $\fH_n$ and an inner multiplier $\Omega_n:\bB_d\to B(\fH_n,\bC)$ with $\|\Omega_n(\lambda_n)\|\geq \eps$ and
\[
\Omega_n(\lambda_m)=0, \quad m\neq n,
\]

\item an \textit{interpolating sequence for $\M_d$} if for every bounded sequence $(a_n)$ there is $\psi\in \M_d$ such that
\[
\psi(\lambda_n)=a_n, \quad n\geq 1.
\]
\end{enumerate}

Strongly separated sequences are obviously separated, and it is a consequence of the open mapping theorem that interpolating sequences are strongly separated. In one variable, the classical interpolation theorem of Carleson \cite{carleson1958} implies conversely that strongly separated sequences are strongly separated by inner functions, and in fact interpolating. However, this last implication fails for other reproducing kernel Hilbert spaces on the unit disc such as the Dirichlet space; this observation seems to be due to Bishop and Marshall--Sundberg \cite{MS1994}. In general, interpolating sequences can be characterized by another separation condition, along with a so-called Carleson measure condition. This important result can be found in \cite{AHMR2017}, and it settled a long-standing open problem.

\begin{theorem}\label{T:AHMR}
The countable subset $\grL=\{\lambda_n:n\geq 1\}\subset \bB_d$ is an interpolating sequence if and only if it satisfies the following two conditions
\begin{enumerate}

\item[\rm{(a)}] there is $\delta>0$ such that
\[
1-\frac{|k(\lambda_n,\lambda_m)|^2}{\|k_{\lambda_n}\|^2\|k_{\lambda_m}\|^2 }\geq \delta
\]
when $n$ and $m$ are distinct positive integers,

\item[\rm{(b)}]  there is $\gamma>0$ such that
\[
\sum_{n=1}^\infty \frac{|f(\lambda_n)|^2}{\|k_{\lambda_n}\|^2}\leq \gamma \|f\|^2
\]
for every $f\in H^2_d$.
\end{enumerate}
\end{theorem}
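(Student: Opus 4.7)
The theorem splits naturally into a necessity direction, which follows from standard operator-theoretic manipulations, and a much deeper sufficiency direction.

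For necessity, suppose $\Lambda$ is interpolating. The evaluation map $E:\M_d\to \ell^\infty$, $\phi\mapsto (\phi(\lambda_n))$, is surjective, so by the open mapping theorem there is a constant $M>0$ such that every unit vector $(a_n)\in\ell^\infty$ is interpolated by some $\phi\in\M_d$ with $\|\phi\|_{\M_d}\leq M$. Specializing to $a_n=\delta_{n,m}$ immediately yields strong separation of $\Lambda$. Condition (a) is then extracted by a short computation using the eigenvalue identity $M_\phi^* k_z=\overline{\phi(z)}k_z$ together with contractivity of the separating multipliers: one tests $M_{\omega_n}^*$ on the vector $\widehat{k}_m-\langle\widehat{k}_m,\widehat{k}_n\rangle\widehat{k}_n$, where $\widehat{k}_\ell=k_{\lambda_\ell}/\|k_{\lambda_\ell}\|$, and rearranges. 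Condition (b) is the boundedness of the Carleson-type embedding $f\mapsto (f(\lambda_n)/\|k_{\lambda_n}\|)$ from $H^2_d$ into $\ell^2$; dually, this is the boundedness of $(c_n)\mapsto \sum_n c_n M_{\phi_n}^* \widehat{k}_n$, which holds because the interpolating multipliers $\phi_n$ witnessing the values $a_n=c_n\|\widehat{k}_n\|^{-1}$ have uniformly bounded multiplier norm.

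The sufficiency direction is the main content of \cite{AHMR2017}. The plan is to exploit that $k(z,w)=(1-\langle z,w\rangle)^{-1}$ is a \emph{normalized complete Pick kernel}. For a target $(a_n)\in\ell^\infty$ with $\|(a_n)\|_\infty\leq 1$, the Pick interpolation theorem reduces producing a multiplier $\phi$ with $\phi(\lambda_n)=a_n$ and $\|\phi\|_{\M_d}\leq M$ to the uniform (in $N$) positivity of the Pick matrices
\[
\bigl[(M^2-a_n\overline{a_m})k(\lambda_n,\lambda_m)\bigr]_{n,m=1}^N.
\]
Conditions (a) and (b) are precisely the ingredients needed to control the Gramian $G=[\langle \widehat{k}_n, \widehat{k}_m\rangle]$ as a bounded operator on $\ell^2$ whose off-diagonal entries are small. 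The breakthrough of AHMR is then to invoke the Marcus--Spielman--Srivastava solution of the Kadison--Singer problem to \emph{pave} $\Lambda$ into finitely many subsequences on each of which the restricted Gramian is uniformly invertible. On each piece, uniform invertibility of the Gramian combined with the complete Pick property yields uniform positivity of the corresponding Pick matrices; finitely many interpolants can then be pasted into a global multiplier.

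The hard part is unambiguously the sufficiency direction: in one variable, the classical Shapiro--Shields argument sidesteps this machinery using Blaschke products and an explicit Carleson construction, but no such tools are available for $d\geq 2$, which is why the result remained open for so long. The main obstacle I would expect to navigate is the quantitative linkage between the Carleson constant $\gamma$, the separation constant $\delta$, and the paving number produced by Kadison--Singer; tracking these constants carefully is what converts the soft structural picture above into genuine existence of interpolating multipliers with norm controlled only by $\gamma$, $\delta$, and the interpolation data.
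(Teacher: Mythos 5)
The paper gives no proof of this theorem: it is stated as a quotation of the main result of \cite{AHMR2017}, with the surrounding prose explicitly saying so. There is therefore nothing in the source to compare your sketch against, and your attempt should be read as an attempt to outline the argument of the reference rather than an argument in the paper.

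With that caveat, the high-level picture in your sketch is faithful to what \cite{AHMR2017} does: the kernel $k(z,w)=(1-\langle z,w\rangle)^{-1}$ is a normalized complete Pick kernel, so producing a multiplier of norm $\leq M$ interpolating a given bounded target reduces, via the Pick interpolation theorem, to uniform positivity of the Pick matrices, and the crucial innovation is to pave the sequence using the Marcus--Spielman--Srivastava resolution of Kadison--Singer to reduce to finitely many pieces on which the normalized Grammian is uniformly invertible. Your derivation of necessity of condition (a) from strong separation via testing $M_{\omega_n}^*$ on the component of a normalized kernel orthogonal to another is also correct and standard.

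However, your argument for the necessity of condition (b) does not work as stated. You claim that (b) follows ``because the interpolating multipliers $\phi_n$ witnessing the values $\ldots$ have uniformly bounded multiplier norm,'' and you describe the adjoint of the Carleson embedding as $(c_n)\mapsto\sum_n c_n M_{\phi_n}^*\widehat{k}_n$. But having, for each $n$, a contractive (or uniformly norm-bounded) multiplier $\phi_n$ with $\phi_n(\lambda_m)=\delta_{nm}$ is precisely strong separation, and strong separation alone does \emph{not} imply the Carleson measure condition (b) --- if it did, strong separation would imply interpolating, which the paper explicitly points out is an open problem in this setting (see the discussion after Theorem~\ref{T:siminterpequivalence} and the failure for the Dirichlet space in \cite{MS1994}). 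The necessity of (b) requires using the full interpolation hypothesis applied to an $\ell^2$-worth of targets simultaneously, not just to the individual Kronecker sequences; the standard route is via a randomization/Khintchine argument or an operator-theoretic argument involving the column of kernel vectors, and this needs to be spelled out rather than attributed to uniform boundedness of the individual $\phi_n$.
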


We record the following elementary consequence.

\begin{corollary}
	Let $\grL\subset \bB_d$ be an interpolating sequence.  Then, $\{w\}\cup\grL$ is also an interpolating sequence for every $w\in \bB_d$.
	\label{C:ISplus1}
\end{corollary}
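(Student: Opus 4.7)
The plan is to verify that $\{w\}\cup\Lambda$ satisfies the two conditions appearing in Theorem \ref{T:AHMR}; we may assume $w\notin\Lambda$, since otherwise $\{w\}\cup\Lambda=\Lambda$ and there is nothing to prove. Enumerate $\Lambda=\{\lambda_n:n\geq 1\}$ and let $\delta,\gamma>0$ be the constants supplied by Theorem \ref{T:AHMR} applied to $\Lambda$.

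The Carleson condition (b) is essentially automatic. For any $f\in H^2_d$, the only new term in the relevant sum is $|f(w)|^2/\|k_w\|^2$, and the reproducing property together with the Cauchy--Schwarz inequality gives
\[
\frac{|f(w)|^2}{\|k_w\|^2}=\frac{|\langle f,k_w\rangle|^2}{\|k_w\|^2}\leq \|f\|^2,
\]
so (b) holds for $\{w\}\cup\Lambda$ with constant $\gamma+1$.

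For the separation condition (a), only the pairs of the form $(w,\lambda_n)$ require attention, since the bound $\delta$ already handles the pairs from $\Lambda$ itself. The goal is therefore to produce a uniform lower bound on the quantities $q_n:=1-|k(w,\lambda_n)|^2/(\|k_w\|^2\|k_{\lambda_n}\|^2)$. I would first observe, as a preliminary lemma, that $\Lambda$ cannot accumulate at $w$: testing (b) against $f=k_w/\|k_w\|$ shows that the $n$-th term of the resulting sum equals precisely $1-q_n$, a quantity that tends to $1$ along any subsequence converging to $w$, so only finitely many $\lambda_n$ can lie in a neighborhood of $w$ without destroying summability.

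With non-accumulation at $w$ in hand, the rest is a compactness argument by contradiction. If no uniform lower bound existed, some subsequence $\lambda_{n_k}$ would satisfy $q_{n_k}\to 0$. Passing to a further subsequence convergent in the compact set $\overline{\bB_d}$, three cases arise. If the limit lies on $\pd\bB_d$, then $\|k_{\lambda_{n_k}}\|\to\infty$ while $|k(w,\lambda_{n_k})|$ remains bounded (because $w\in\bB_d$), forcing $q_{n_k}\to 1$. If the limit is an interior point distinct from $w$, the strict case of Cauchy--Schwarz applied to the reproducing kernels yields a strictly positive limit for $q_{n_k}$. If the limit is $w$ itself, the non-accumulation conclusion above is violated. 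Each case is a contradiction, so (a) holds and Theorem \ref{T:AHMR} completes the proof. The only mildly delicate step is the non-accumulation observation; the remainder is continuity and the equality case of Cauchy--Schwarz.
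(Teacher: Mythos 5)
Your argument is correct, and it follows a genuinely different route from the paper. The paper also starts from Theorem~\ref{T:AHMR} and takes (b) for granted, but for (a) it extracts a \emph{weak} limit $h$ of the normalized kernels $\widehat{k_{\lambda_n}}$, invokes the equality case of Cauchy--Schwarz to conclude $h=\zeta\,\widehat{k_w}$, and then evaluates $h$ at the origin to force $\|z\|=\|w\|<1$, contradicting the fact that an interpolating sequence cannot accumulate in $\bB_d$. You instead use norm compactness of $\ol{\bB_d}$ and a three-way case analysis on the limit point of the subsequence, which avoids weak compactness entirely and is arguably more elementary. Your preliminary observation --- that testing the Carleson condition (b) for $\Lambda$ against $f=\widehat{k_w}$ gives $\sum_n(1-q_n)\leq\gamma<\infty$ --- is the right idea and is what makes the case ``limit $=w$'' harmless. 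One small issue in case 2: you should note explicitly why Cauchy--Schwarz is strict, i.e.\ that $k_w$ and $k_z$ are linearly independent for $z\neq w$ in $\bB_d$; for the Drury--Arveson kernel this is immediate (evaluating $k_z=ck_w$ at $x=0$ gives $c=1$, and then $\langle x,z\rangle=\langle x,w\rangle$ for all $x$).

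In fact you have an even shorter proof hiding in your preliminary observation and did not notice it. Since $\sum_n(1-q_n)\leq\gamma$ is a convergent series, its terms tend to zero, so $q_n\to 1$ along the \emph{entire} sequence. Thus all but finitely many $q_n$ exceed $1/2$, and the remaining finitely many are strictly positive by the strict Cauchy--Schwarz inequality (since $w\notin\Lambda$). Hence $\inf_n q_n>0$ directly, and the compactness case analysis becomes superfluous. You may want to streamline the argument accordingly.
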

\begin{proof}
We may clearly suppose that $w\notin \Lambda$. It is clear that $\{w\}\cup\grL$ satisfies property (b) in Theorem \ref{T:AHMR}, so it suffices to check that $\{w\}\cup\grL$ also satisfies property (a) therein. Assume otherwise, so that there is a subsequence $(\lambda_n)$ of $\Lambda$ with the property that
\[
\lim_{n\to\infty} \frac{|k(\lambda_n,w)|^2}{\|k_{\lambda_n}\|^2\|k_{w}\|^2 }=1.
\]
Upon passing to a further subsequence if necessary, we may assume that $(\lambda_n)$ converges to some $z\in \ol{\bB_d}$ and that the sequence of unit vectors
\[
\widehat{k_{\lambda_n}}=\frac{k_{\lambda_n}}{\|k_{\lambda_n}\|}, \quad n\geq 1
\]
converges weakly in $H^2_d$ to some $h\in H^2_d$ with $\|h\|\leq 1$. Note now that we have
\[
\left|\left\langle h,\frac{k_w}{\|k_w\|} \right\rangle\right|=\lim_{n\to\infty}\left|\left\langle \widehat{k_{\lambda_n}},\frac{k_w}{\|k_w\|} \right\rangle\right|^2=1
\]
so by the Cauchy-Schwarz inequality there is $\zeta\in \CC$ such that $|\zeta|=1$ and 
\[
h=\grz\frac{k_w}{\|k_w\|}.
\]
	Thus, we find
	\begin{align*}
	 \sqrt{1-\|w\|^2}&=|h(0)|=|\langle h,k_0\rangle|=\lim_{n\to\infty}|\langle \widehat{k_{\lambda_n}}, k_0\rangle|\\
	&=\lim_{n\to\infty} \sqrt{1-\|\lambda_n\|^2}=\sqrt{1-\|z\|^2}.
	\end{align*}
	whence $\|z\|=\|w\|<1$. Thus, the sequence $\Lambda$ has an accumulation point in the open unit ball $\bB_d$. Since subsequences of interpolating sequences are interpolating themselves, and since functions in $\M_d$ are continuous on $\bB_d$, this is readily seen to be impossible.
\end{proof}

\subsection{Multivariate operator theory and functional calculi}\label{SS:MOT}
Let  $T_1,\ldots,T_d\in B(\fH)$ be commuting operators, and let $T=(T_1,\ldots,T_d)$.  We will denote by $\sigma(T)\subset \bC^d$ the Taylor spectrum of $T$. See \cite{VasilescuBook} or \cite{muller2007} for comprehensive treatments of the Taylor spectrum.  One important tool we will need is the so-called \emph{Taylor functional calculus}. Given an open subset $U\subset \CC^d$, we let $\mc{O}(U)$ denote the ring of functions that are holomorphic on $U$. Then, there is a constant $C>0$ and a unital algebra homomorphism 
	\[
	\tau_{T,U}:\mc{O}(U)\to  \{T_1,\ldots,T_d\}''
	\]
	such that 
	\[
	\tau_{T,U}(x_j)=T_j, \quad 1\leq j\leq d
	\]
	and
	\[
	 \|\tau_{T,U}(f)\|\leq C\sup_{z\in\sigma(T)}|f(z)|
	 \]
	 for every $f\in \O(U)$ (see \cite[Theorem  III.9.9]{VasilescuBook}). The following summarizes the properties of the Taylor functional calculus that we will need.	 
	\begin{theorem}\label{T:Taylorprop}
	Let $T=(T_1,\ldots,T_d)$ be a commuting $d$-tuple of operators on some Hilbert space $\fH$. Let $U\subset \bC^d$ be an open set containing $\sigma(T)$. Let 
	\[
	\tau_{T,U}:\O(U)\to B(\fH)
	\]
	be the Taylor functional calculus. Then, the following statements hold.
	\begin{enumerate}
		\item[\rm{(i)}] If $V\subset \bC^d$ is another open set containing $\sigma(T)$, and if $f\in\mc{O}(U)$ and $g\in\mc{O}(V)$ are functions such that $f|_{U\cap V}=g|_{U\cap V}$, then
			\[ \tau_{T,U}(f)=\tau_{T,V}(g). \]
		\item[\rm{(ii)}]  Let $R=(R_1,\dots,R_d)$ be a commuting $d$-tuple of operators on a Hilbert space $\frk{K}$ with $\sigma(R)\subset U$.
			Let $X\in\mc{B}(\frk{H},\frk{K})$ be such that $XT_j=R_jX$ for $j=1,\dots,d$.
			Then
			\[ X\tau_{T,U}(f)=\tau_{R,U}(f)X \]
			for each $f\in \mc{O}(U)$.
		\item[\rm{(iii)}]  Let $K_1$ and $K_2$ be disjoint non-empty compact subsets of $\CC^d$, and suppose $U_1$ and $U_2$ are open disjoint neighbourhoods of $K_1$ and $K_2$, respectively.
			Let $\chi$ denote the characteristic function of the set $U_1$, and set
			$P=\tau_{T,U_1\cup U_2} (\chi). $
			Then, $P$ is a non-zero idempotent operator commuting with $T$ which satisfies
			\[ \sigma(T|_{\ran P})=K_1 \qand \sigma(T|_{\ran(I-P)})=K_2. \]
	\end{enumerate}
\end{theorem}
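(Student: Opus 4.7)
All three clauses are part of the standard toolkit for the Taylor functional calculus, so my plan is to reduce each to a known theorem and supply the glue required for the precise statements here.

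For part (i), the plan is to pass to the common refinement $W = U \cap V$. Since $W$ is open and contains $\sigma(T)$, the Taylor calculus $\tau_{T,W}$ is defined on $\O(W)$. The basic restriction property of the Taylor calculus, established in \cite{VasilescuBook}, asserts that $\tau_{T,U}(h) = \tau_{T,W}(h|_W)$ for any $h \in \O(U)$, and analogously with $V$ in place of $U$. Applying this twice and then using the hypothesis $f|_W = g|_W$ yields
\[ \tau_{T,U}(f) = \tau_{T,W}(f|_W) = \tau_{T,W}(g|_W) = \tau_{T,V}(g), \]
which is the claim.

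For part (ii), I would proceed in the usual two steps. First, for any polynomial $p \in \bC[x_1,\ldots,x_d]$, the relation $Xp(T) = p(R)X$ follows by a straightforward induction from $XT_j = R_jX$, settling the claim when $f$ is a polynomial. The extension to arbitrary $f \in \O(U)$ appeals to the construction of the Taylor calculus: $\tau_{T,U}(f)$ is representable through a Cauchy--Weil-type integral whose integrand depends on $T$ only through operators built from the Koszul complex of $z - T$. Since $X$ intertwines the Koszul operators of $T$ with those of $R$, the intertwining passes through the integral. This naturality is laid out in \cite[Ch.~III]{VasilescuBook}, and I would cite it rather than reprove it.

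For part (iii), I would argue in several small steps. Because $U_1$ and $U_2$ are disjoint open sets, $\chi$ is locally constant on $U_1 \cup U_2$ and hence lies in $\O(U_1 \cup U_2)$, with $\chi^2 = \chi$. Since $\tau_{T, U_1 \cup U_2}$ is an algebra homomorphism with image in $\{T_1,\ldots,T_d\}''$, it follows immediately that $P^2 = P$ and that $P$ commutes with each $T_j$; in particular $\ran P$ and $\ran(I-P)$ are $T$-invariant. For the non-triviality and the spectral identities, I would invoke the super-spectral mapping theorem for the Taylor calculus: applied to $\chi$, it gives $\sigma(P) = \{\chi(z) : z \in \sigma(T)\} \subset \{0,1\}$, and under the natural standing hypothesis that $\sigma(T)$ meets each $U_i$ in precisely $K_i$, both values $0$ and $1$ occur, so $P$ is neither zero nor the identity. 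The spectral equalities then come from identifying the restriction of $\tau_{T,U_1 \cup U_2}$ to $\ran P$ with the Taylor calculus of $T|_{\ran P}$ on a suitable open neighbourhood of $K_1$, combined with the direct-sum decomposition formula $\sigma(T) = \sigma(T|_{\ran P}) \cup \sigma(T|_{\ran(I-P)})$ for invariant subspaces.

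The main obstacle is really the book-keeping in part (iii): one has to check carefully that the restricted functional calculus is itself a Taylor calculus (uniqueness then pins it down), and one needs the super-spectral mapping theorem in a form strong enough to deliver spectral equalities rather than mere inclusions. Parts (i) and (ii) are essentially citations supported by short verifications, whereas (iii) is where all the ingredients have to be tied together with some care.
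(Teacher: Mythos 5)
Your proposal is correct and matches the paper's approach: the paper's proof is a one-line citation to Theorem III.13.5 and Corollaries III.9.10, III.9.11 of Vasilescu, and your argument unpacks precisely these standard facts (restriction compatibility, naturality of the Cauchy--Weil integral, and the Shilov idempotent decomposition) with the same ultimate appeal to that reference. You also correctly flag the implicit hypothesis in part (iii) that $\sigma(T)=K_1\cup K_2$, which the paper leaves unstated but satisfies in every application.
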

\begin{proof}
These facts can be found in Theorem III.13.5 along with Corollaries III.9.10 and III.9.11 of \cite[ ]{VasilescuBook}.
\end{proof}

Our attention will be focused on the subclass of commuting $d$-tuples $T=(T_1,\ldots,T_d)$ which are \emph{row contractions} in the sense that 
\[
T_1T_1^*+T_2T_2^*+\ldots+T_dT_d^*\leq I.
\]
A crucial example of a commuting row contraction is the \emph{Arveson shift}
\[
M_x=(M_{x_1},M_{x_2},\ldots,M_{x_d})
\]
which acts on the Drury-Arveson space $H^2_d$. Another important example is
\[
Z^{\fa}=P_{\H_\fa}M_x|_{\H_\fa}
\]
where $\fa\subset \M_d$ is any ideal and 
\[
\H_\fa=H^2_d\ominus [\fa H^2_d].
\]
It is readily verified that the constant function $1\in H^2_d$ is a cyclic vector for $M_x$, and that $P_{\H_\fa} 1$ is a cyclic vector for $Z^\fa$.

One reason which explains the importance of $M_x$ is that it plays a certain universal role among commuting row contractions, as we describe next. 
Let $T=(T_1,\ldots,T_d)$ be a commuting row contraction on some Hilbert space $\fH$. By \cite[Theorem 8.1]{arveson1998}, there is a unital completely contractive homomorphism
\[
\alpha_T:\A_d\to B(\fH)
\]
such that
\[
\alpha_T(x_j)=T_j,\quad 1\leq j\leq d.
\]
The commuting row contraction $T$ is said to be \emph{absolutely continuous} (or AC for short) if $\alpha_T$ extends to a weak-$*$ continuous unital algebra homomorphism on $\mc{M}_d$. It follows from \cite[Theorem 3.3]{CD2016duality} (see also \cite[Theorem 2.4]{CD2016abscont}) that $T$ is AC if and only if the sequence $(\alpha_T(\phi_n))$ converges to $0$ in the weak-$*$ topology of $B(\fH)$ whenever $(\phi_n)$ is a bounded sequence in $\A_d$ converging to $0$ pointwise on $\bB_d$. The latter two conditions are in fact equivalent to the sequence $(\phi_n)$ converging to $0$ in the weak-$*$ topology of $\M_d$.
Since the polynomial multipliers are weak-$*$ dense in $\M_d$, if $T$ is AC then the weak-$*$ continuous extension of $\alpha_T$ is unique and we denote it by 
\[
\widehat{\alpha_T}:\M_d\to B(\fH).
\]
The \emph{annihilating ideal} of $T$ is defined to be
\[
\Ann(T)=\{\psi\in\M_d:\widehat{\alpha_T}(\psi)=0\}.
\]
Next, we show that the functional calculus just defined is compatible with the Taylor functional calculus. This is folklore, but we provide the details for the reader's convenience.

\begin{theorem}
	Let $T=(T_1,\dots,T_d)$ be a commuting row contraction on some Hilbert space $\frk{H}$. Then, the following statements hold.
	\begin{enumerate}
		\item[\rm{(i)}] Let $U\subset \bC^d$ be an open set which contains the closed unit ball $\ol{\bB_d}$. Let $f\in\mc{O}(U)$ and put $\phi=f|_{\bB_d}$. Then $\phi\in \mc{A}_d$ and 
		\[
		\alpha_T(\phi)=\tau_{T,U}(f).
		\]
		\item[\rm{(ii)}]  Assume that $\sigma(T)\subset \BB_d$. Then, for every $\phi\in \A_d$ we have
		\[
		\alpha_{T}(\phi)=\tau_{T,\bB_d}(\phi).
		\]
		If in addition $T$ is AC, then for every $\psi\in \M_d$ we have
		\[
		\widehat{\alpha_T}(\psi)=\tau_{T,\bB_d}(\psi).
		\]
	\end{enumerate}
	\label{T:EqualFC}
\end{theorem}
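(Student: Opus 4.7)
The plan is to leverage that polynomials are norm-dense in $\A_d$ and weak-$*$ dense in $\M_d$, together with the observation that both $\alpha_T$ and the Taylor calculus $\tau_{T,\cdot}$ are algebra homomorphisms which automatically agree on polynomials since each sends $x_j$ to $T_j$. The work thus reduces to choosing the right approximation and checking continuity of each calculus in the relevant topology.

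For (i), the row-contraction hypothesis gives $\sigma(T)\subset \ol{\bB_d}$, and since $U$ is an open neighborhood of the compact set $\ol{\bB_d}$, there exists $r>1$ with $r\ol{\bB_d}\subset U$. Write $f=\sum_k f_k$ as the sum of its homogeneous components at the origin. Slicing along radial directions and applying the one-variable Cauchy estimate yields $\sup_{\ol{\bB_d}}|f_k|\leq M s^{-k}$ for any $1<s<r$. A comparison between sup-norm and multiplier-norm on homogeneous blocks, resting on the identity $\|q\|_{H^2_d}^2=\binom{d+k-1}{d-1}\|q\|_{L^2(\bS_d)}^2$ for $q$ homogeneous of degree $k$, yields $\|f_k\|_{\M_d}\leq c_d k^{(d-1)/2}\sup_{\ol{\bB_d}}|f_k|$, so that $\sum_k\|f_k\|_{\M_d}<\infty$. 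Therefore $\phi:=f|_{\bB_d}\in\A_d$ and the partial sums $p_N=\sum_{|\alpha|\leq N}c_\alpha x^\alpha$ converge to $\phi$ in the $\A_d$-norm. Since $\alpha_T(p_N)=p_N(T)=\tau_{T,U}(p_N)$, norm continuity of $\alpha_T$ on $\A_d$ gives $\alpha_T(p_N)\to\alpha_T(\phi)$, while the estimate $\|\tau_{T,U}(g)\|\leq C\sup_{\sigma(T)}|g|$ combined with uniform convergence $p_N\to f$ on $\ol{\bB_d}\supset\sigma(T)$ gives $\tau_{T,U}(p_N)\to\tau_{T,U}(f)$; equating limits closes (i).

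For (ii), the hypothesis $\sigma(T)\subset\bB_d$ places the spectrum compactly inside the open ball and allows the choice $U=\bB_d$. For $\phi\in\A_d$, pick polynomials $p_n\to\phi$ in $\A_d$-norm; because $\sup_{\bB_d}|\phi|\leq\|\phi\|_{\M_d}$, the convergence is uniform on $\sigma(T)$, so both $\alpha_T(p_n)\to\alpha_T(\phi)$ and $\tau_{T,\bB_d}(p_n)\to\tau_{T,\bB_d}(\phi)$ in norm, forcing agreement of the limits. For the AC case and $\psi\in\M_d$, choose a bounded sequence of polynomials $(p_n)$ converging to $\psi$ weak-$*$ in $\M_d$; this is available because $\A_d$ is weak-$*$ dense in $\M_d$ and bounded weak-$*$ subsets are metrizable, using separability of $H^2_d$. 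Point evaluation at any $w\in\bB_d$ is weak-$*$ continuous on $\M_d$, so $p_n\to\psi$ pointwise on $\bB_d$; boundedness plus Vitali's theorem upgrade this to uniform convergence on the compact set $\sigma(T)\subset\bB_d$. Therefore $\tau_{T,\bB_d}(p_n)\to\tau_{T,\bB_d}(\psi)$ in norm, hence in weak-$*$. Since $\widehat{\alpha_T}$ is weak-$*$ continuous by the AC hypothesis, $\widehat{\alpha_T}(p_n)\to\widehat{\alpha_T}(\psi)$ weak-$*$; as $\widehat{\alpha_T}(p_n)=\tau_{T,\bB_d}(p_n)$ termwise by the polynomial case, the two weak-$*$ limits coincide.

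The key technical point is in (i): controlling the multiplier norm of a homogeneous polynomial by its sup-norm, up to a polynomial factor in the degree, so that the geometric decay coming from holomorphicity past $\ol{\bB_d}$ actually forces convergence of the Taylor series in $\A_d$. The rest is a topological bookkeeping that plays off norm continuity of $\alpha_T$ on $\A_d$, the Cauchy-type estimate for the Taylor calculus on $\sigma(T)$, and weak-$*$ continuity of $\widehat{\alpha_T}$ against a Vitali-type convergence theorem for bounded multiplier sequences.
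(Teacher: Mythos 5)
Your proof is correct, and in part (i) you take a genuinely different route than the paper. To show $\phi\in\A_d$, the paper applies the Taylor calculus to the Arveson shift $M_x$ itself: since $\sigma(M_x)\subset\ol{\bB_d}$, the operator $\tau_{M_x,U}(f)$ is a well-defined element of $\{M_{x_1},\dots,M_{x_d}\}''=\M_d$, say $M_\psi$; the Taylor polynomials of $f$ converge to $f$ uniformly on a neighbourhood of $\ol{\bB_d}$, so by continuity of $\tau_{M_x,U}$ they converge to $M_\psi$ in multiplier norm, forcing $\psi\in\A_d$; and $\psi=\phi$ because the multiplier norm dominates the sup norm. You instead bound the homogeneous components explicitly: the Cauchy estimate gives geometric decay of $\sup_{\ol{\bB_d}}|f_k|$, and the chain $\|f_k\|_{\M_d}\leq\|f_k\|_{H^2_d}\leq\binom{d+k-1}{d-1}^{1/2}\sup_{\bS_d}|f_k|$ then gives $\sum_k\|f_k\|_{\M_d}<\infty$. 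The first inequality in that chain --- that the multiplier norm of a homogeneous polynomial of degree $k$ is bounded by its $H^2_d$-norm --- is the crux of your ``comparison between sup-norm and multiplier-norm'' and should be stated explicitly; it follows from the row contractivity of $\bigl(\sqrt{k!/\alpha!}\,M_x^\alpha\bigr)_{|\alpha|=k}$, the same fact the paper exploits in Section 6. Your route yields a quantitative membership criterion for $\A_d$, while the paper's bootstrap through $M_x$ avoids hard estimates and is shorter. In part (ii), for the AC case you approximate $\psi$ by a bounded sequence of polynomials converging weak-$*$ and invoke Vitali/Montel for uniform convergence on $\sigma(T)$; the paper uses the dilations $\psi_n(z)=\psi((1-1/n)z)$, which lie in $\A_d$, are bounded automatically, converge uniformly on compacta for free, and let the paper invoke part (i) directly. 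Your version works, but note that the existence of a \emph{bounded} weak-$*$ convergent polynomial sequence does not follow just from weak-$*$ density of $\A_d$ plus metrizability of bounded sets; you should supply the dilation argument (or cite that the unit ball of $\A_d$ is weak-$*$ dense in that of $\M_d$) to get the bound.
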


\begin{proof}
	(i) Because  $M_x$ is a row contraction, we must have that $\sigma(M_x)\subset \cc{\BB}_d$, whence $\tau_{M_x,U}(f)$ is a well-defined element of $B(H^2_d)$ which lies in 
	\[
	\{M_{x_1},M_{x_2},\ldots,M_{x_d}\}''=\M_d''=\M_d.
	\] 
	It follows that there is a $\psi\in\mc{M}_d$ such that $\tau_{M_x,U}(f)=M_\psi$. On the other hand, since the function $f$ is holomorphic on a neighbourhood of the closed unit ball, upon expanding it as a convergent power series centred at the origin we find a sequence of polynomials $(p_n)$ that converge uniformly on a neighbourhood of $\ol{\bB_d}$ to $f$. Consequently, by the continuity property of $\tau_{M_x,U}$, we find that the sequence of operators
	\[
	\tau_{M_x,U}(p_n)=M_{p_n}, \quad n\geq 1
	\]
	converges in norm to
	\[
	\tau_{M_x,U}(f)=M_\psi.
	\]
	This forces $\psi\in \A_d$ and since the multiplier norm dominates the supremum norm over $\bB_d$, we also find that $(p_n(z))$ converges to $\psi(z)$ for every $z\in \bB_d$. In particular, we infer that $\psi=\phi$ so indeed $\phi\in \A_d$. Finally, using the continuity property of $\alpha_T$ and of $\tau_{T,U}$  we find that the sequence of operators
	\[
	M_{p_n}=\alpha_T(p_n)=\tau_{T,U}(p_n), \quad n\geq 1
	\]
	converges in norm to both $\alpha_T(\phi)$ and to $\tau_{T,U}(f)$, whence
	\[
	\alpha_T(\phi)=\tau_{T,U}(f).
	\]

	(ii) A standard polynomial approximation argument similar to the one used above shows that 
	\[
		\alpha_{T}(\phi)=\tau_{T,\bB_d}(\phi).
	\]
	for every $\phi\in \A_d$. Assume now that $T$ is AC and fix $\psi\in \M_d$. For each $n\geq 1$, we define $U_n$ to be the open ball of radius $(1-1/n)^{-1}$ centred at the origin, and we let 
	\[
	\psi_n(z)=\psi((1-1/n)z), \quad z\in U_n.
	\]
	Then, $\psi_n\in \O(U_n)$ so that $\psi_n\in \A_d$ and
	\[
	\alpha_T(\psi_n)=\tau_{T,U_n}(\psi_n)
	\]
	for every $n\geq 1$ by (i). Since $\sigma(T)\subset \bB_d$, we may invoke Theorem \ref{T:Taylorprop} to see that
	\[
	\widehat{\alpha_T}(\psi_n)=\alpha_T(\psi_n)=\tau_{T,U_n}(\psi_n)=\tau_{T,\bB_d}(\psi_n)
	\]
	for every $n\geq 1$.
	Furthermore, we note that by the uniform continuity of $\psi$ on $\sigma(T)$, we have that the sequence $(\psi_n)$ converges uniformly on $\sigma(T)$ to $\psi$, whence the sequence $(\tau_{T,\bB_d}(\psi_n))$ converges in norm to $\tau_{T,\bB_d}(\psi)$ by the continuity property of $\tau_{T,\bB_d}$. On the other hand, it is well-known that $(M_{\psi_n})_n$ converges to $M_\psi$ in the weak-$*$ topology \cite[Theorem 3.5.5]{shalit2014}, so that the sequence $(\widehat{\alpha_T}(\psi_n))$ converges in the weak -$*$ topology to $\widehat{\alpha_T}(\psi)$. Hence, the two limits must coincide and we conclude that
	\[
	\widehat{\alpha_T}(\psi)=\tau_{T,\bB_d}(\psi). \qedhere
	\]
	\end{proof}
In view of this result, we may unambiguously use the notation $\phi(T)$ to denote the functional calculus associated to a commuting row contraction $T$ and applied to a function $\phi$, provided that this makes sense to begin with. We will do so henceforth, and will not distinguish between the various functional calculi. 

\subsection{Analytic varieties, ideals and germs}\label{SS:varieties}

Let $U\subset \CC^d$ be an open set. Given a subset of functions $F\subset \mc{O}(U)$ and an open subset $W\subset U$, we put
\[ \mc{Z}_W(F)=\{z\in W: f(z)=0\text{ for every }f\in F\}. \]
If $F$ is a finite set $\{f_1,\cdots,f_m\}$, then we also write $\mc{Z}_W(f_1,\cdots,f_m)$ instead of $\Z_W(F)$.
Recall that a set $\V\subset U$ is an \textit{analytic variety} in $U$ if for every $z\in \V$ there is an open subset $W\subset U$ containing $z$ along with finitely many functions  $f_1,\cdots,f_m\in \mc{O}(W)$ such that
\[ \V\cap W=\mc{Z}_W(f_1,\cdots,f_m). \]

Next, fix $z\in \bC^d$. We define an equivalence relation on the set of functions that are holomorphic in a neighbourhood of $z$. Let $U_1,U_2\subset \bC^d$ be open neighbourhoods of $z$, and let $f_1\in \mc{O}(U_1)$ and $f_2\in\mc{O}(U_2)$ be functions such that $f_1|_{U_1\cap U_2}=f_2|_{U_1\cap U_2}$. In this case, we write $f_1\sim_z f_2$; the relation $\sim_z$ is an equivalence relation, and we denote by $[f]_z$ the equivalence class of a function $f$ holomorphic on a neighbourhood of $z$.
We call $[f]_z$ the \textit{germ of $f$ at $z$}, and we denote by $\mc{O}(z)$ the ring of all germs of holomorphic functions at $z$. 

In the following result, given a subset $S$ of a ring we denote by $\langle S\rangle$ the ideal generated by $S$.

\begin{theorem}
	Let $U\subset \bC^d$ be an open subset and let $F\subset \O(U)$. Then, $\Z_U(F)$ is an analytic variety in $U$. In fact,
	for each $z\in \mc{Z}_U(F)$ there is an open subset $W\subset U$ containing $z$ along with finitely many functions $f_1,\dots,f_k\in F$ such that
	\[ \mc{Z}_U(F)\cap W= \{z\in W: f_1(z)=\cdots=f_k(z)=0\} \]
	and
	\[ \ip{[f]_z:f\in F}=\ip{[f_1]_z,\cdots,[f_k]_z}. \]
	\label{T:GZZG} 
\end{theorem}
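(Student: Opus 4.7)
The plan is to convert the a priori infinite set of defining functions $F$ into a finite one by exploiting the fact that $\O(z)$ is a noetherian ring (Rückert's theorem), and then to upgrade a germ-level equality to a genuine set-level equality in a small enough neighbourhood of $z$ by using the local finiteness of the irreducible decomposition of an analytic variety together with the identity theorem for irreducible analytic sets.

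First, I would fix $z\in\Z_U(F)$ and consider the ideal $I_z:=\langle [f]_z:f\in F\rangle$ of $\O(z)$. Since $\O(z)$ is noetherian, $I_z$ is finitely generated, and writing each generator as an $\O(z)$-linear combination of germs coming from $F$ and collecting the finitely many functions that appear, I obtain $f_1,\ldots,f_k\in F$ with
\[ I_z=\langle [f_1]_z,\ldots,[f_k]_z\rangle, \]
which is the second assertion.

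Next, set $\V=\Z_U(f_1,\ldots,f_k)$. The inclusion $\Z_U(F)\cap W\subset \V\cap W$ is trivial for any open $W\subset U$ containing $z$. For the reverse inclusion, I would choose $W$ small enough that $\V\cap W$ decomposes into finitely many irreducible analytic subsets $\V_1,\ldots,\V_m$ of $W$, each containing $z$, representing the irreducible components of the germ of $\V$ at $z$ (this is possible by the local finiteness of the irreducible decomposition). Then for each $f\in F$, the relation $[f]_z=\sum_i [h_i]_z[f_i]_z$ in $\O(z)$ lifts to an identity $f=\sum_i h_i f_i$ on some open neighbourhood $V_f\subset W$ of $z$, so that $f$ vanishes on the nonempty open subset $\V_j\cap V_f$ of $\V_j$ for every $j$.

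The key step, which I expect to be the main obstacle, is passing from this per-$f$ local vanishing on the shrinking neighbourhoods $V_f$ to simultaneous vanishing on a single fixed $W$. This is handled by invoking the identity theorem for irreducible analytic sets: since each $\V_j$ is irreducible in $W$ and $f$ vanishes on the relatively open nonempty subset $\V_j\cap V_f$ of $\V_j$, it follows that $f\equiv 0$ on $\V_j$, and hence on all of $\V\cap W=\bigcup_j\V_j$. Because this holds uniformly for every $f\in F$, we conclude $\V\cap W\subset \Z_U(F)$, and combining with the trivial reverse inclusion yields $\Z_U(F)\cap W=\Z_W(f_1,\ldots,f_k)$. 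Since $z\in \Z_U(F)$ was arbitrary, this simultaneously shows that $\Z_U(F)$ is an analytic variety in $U$.
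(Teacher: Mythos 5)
Your proof is correct, and since the paper merely cites \cite{GunningRossi} for this result, your argument is essentially a self-contained reconstruction of the standard proof behind that citation: Noetherianity of $\O(z)$ (R\"uckert's basis theorem) produces finitely many $f_1,\dots,f_k\in F$ with $\ip{[f]_z:f\in F}=\ip{[f_1]_z,\dots,[f_k]_z}$, and the identity principle on the finitely many irreducible components of $\Z_W(f_1,\dots,f_k)$ through $z$ upgrades the germ-level equality to equality of zero sets on a single fixed neighbourhood $W$. You correctly flag the passage from per-$f$ vanishing on shrinking $V_f$ to vanishing on $W$ as the crux, and it is handled properly: each $\V_j$ is irreducible (so its regular locus is a connected dense submanifold, and a proper analytic subset of $\V_j$ is nowhere dense), hence vanishing on the nonempty relatively open set $\V_j\cap V_f$ forces vanishing on all of $\V_j$.
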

\begin{proof}
This follows from \cite[Theorem II.E.3]{GunningRossi} and its proof.
\end{proof}

Let $\bC[x_1,\ldots,x_d]$ be the ring of polynomials in $d$ variables. For each $z\in\CC^d$, let 
\[
\frk{m}_z=\ip{x_1-z_1,\cdots,x_d-z_d}
\]
which is a maximal ideal in $\bC[x_1,\ldots,x_d]$. Correspondingly, we let
\[
\widetilde{\fm_z}=\ip{[x_1-z_1]_z,\cdots,[x_d-z_d]_z}
\]
which is the maximal ideal in $\O(z)$.
The next sequence of lemmas shows, for some purposes, that polynomials, holomorphic functions and multipliers can be used interchangeably when studying germs.
First, we deal with a density question.

\begin{lemma}
	Let $\frk{c}$ be an ideal in $\mc{M}_d$, and let $\cc{\frk{c}}$ denote its weak-$*$ closure.
	Let $z\in\BB_d$. Assume that there is a positive integer $\mu$ such that
	\[
	\widetilde{\fm_z}^\mu\subset \langle [f]_z:f\in\frk{c}\rangle.
	\]
	Then, we have that
	\[ \langle[f]_z: f\in\cc{\frk{c}} \rangle = \langle [f]_z:f\in\frk{c} \rangle. \]
	\label{L:PIClosures}
\end{lemma}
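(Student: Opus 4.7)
The inclusion $\langle [f]_z : f \in \mathfrak{c}\rangle \subset \langle [f]_z : f \in \overline{\mathfrak{c}}\rangle$ is trivial, so the task is to show every germ of an element of $\overline{\mathfrak{c}}$ already lies in $I_z := \langle [g]_z : g \in \mathfrak{c}\rangle$. The plan is to split any multiplier into its Taylor polynomial at $z$ plus a higher order error, and handle these two pieces separately, exploiting the fact that the hypothesis kills everything in $\widetilde{\mathfrak{m}_z}^\mu$.

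Fix $f \in \overline{\mathfrak{c}}$ and choose a net $(f_\beta)$ in $\mathfrak{c}$ with $f_\beta \to f$ in the weak-$*$ topology of $\mathcal{M}_d$. First I would apply Theorem~\ref{T:GleasonsTrick} with $N = \mu$ to both $f$ and each $f_\beta$, writing
\[
f \;=\; P_f \;+\; \sum_{|\alpha|=\mu}(x-z)^\alpha \psi_\alpha, \qquad f_\beta \;=\; P_{f_\beta} \;+\; \sum_{|\alpha|=\mu}(x-z)^\alpha \psi^{(\beta)}_\alpha,
\]
where
\[
P_g(x) \;=\; \sum_{|\alpha|<\mu}\frac{1}{\alpha!}\frac{\partial^\alpha g}{\partial x^\alpha}(z)(x-z)^\alpha
\]
is the degree $<\mu$ Taylor polynomial of $g$ at $z$. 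The germ of each remainder lies in $\widetilde{\mathfrak{m}_z}^\mu$, which by hypothesis is contained in $I_z$. In particular $[P_{f_\beta}]_z = [f_\beta]_z - [\text{remainder}]_z \in I_z$ for every $\beta$, and the problem reduces to proving $[P_f]_z \in I_z$.

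The next key point is that for every multi-index $\alpha$, the linear functional $\phi \mapsto \frac{\partial^\alpha \phi}{\partial x^\alpha}(z)$ is weak-$*$ continuous on $\mathcal{M}_d$. Indeed, from \eqref{Eq:derivative} applied to $M_\phi 1 = \phi \in H^2_d$ one has
\[
\frac{\partial^\alpha \phi}{\partial x^\alpha}(z) \;=\; \Bigl\langle M_\phi 1,\; \frac{\partial^\alpha k_z}{\partial \overline{z}^\alpha}\Bigr\rangle_{H^2_d},
\]
which is the restriction of a vector functional on $B(H^2_d)$, hence weak-$*$ continuous. Consequently each coefficient of $P_{f_\beta}$ converges to the corresponding coefficient of $P_f$, so $P_{f_\beta} \to P_f$ in the finite-dimensional vector space of polynomials in $x-z$ of degree less than $\mu$.

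Finally I would pass to the finite-dimensional quotient $\mathcal{O}(z)/\widetilde{\mathfrak{m}_z}^\mu$. The restriction to polynomials of degree $<\mu$ gives a linear isomorphism from that space onto $\mathcal{O}(z)/\widetilde{\mathfrak{m}_z}^\mu$, and since $\widetilde{\mathfrak{m}_z}^\mu \subset I_z$, the image of $I_z$ in this quotient corresponds to a linear subspace $V$ of the polynomials of degree $<\mu$. As $V$ is a subspace of a finite-dimensional space it is closed, so from $P_{f_\beta} \in V$ and $P_{f_\beta} \to P_f$ we conclude $P_f \in V$, i.e.\ $[P_f]_z \in I_z$, and therefore $[f]_z \in I_z$ as required. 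The step I expect to be most delicate is verifying weak-$*$ continuity of the derivative functionals; the rest is essentially a bookkeeping argument mediated by the finite-dimensional local quotient, which is precisely what the hypothesis $\widetilde{\mathfrak{m}_z}^\mu \subset I_z$ was designed to enable.
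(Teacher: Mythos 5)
Your argument is correct and matches the paper's strategy in all essentials: both proofs hinge on the weak-$*$ continuity of the derivative functionals $\phi\mapsto \frac{\partial^\alpha\phi}{\partial x^\alpha}(z)$ coming from \eqref{Eq:derivative}, together with the reduction to the finite-dimensional truncation below order $\mu$ enabled by the hypothesis $\widetilde{\fm_z}^\mu\subset \langle[f]_z:f\in\fc\rangle$. The paper packages this by introducing the surjective weak-$*$ continuous map $\Delta:\M_d\to\bC^N$ and choosing $g\in\fc$ with $\Delta(f-g)=0$, whereas you pass to a limit of truncated Taylor polynomials inside a closed subspace of $\O(z)/\widetilde{\fm_z}^\mu$; these are the same idea, differently phrased.
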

\begin{proof}
	Let $\frk{a}=\ip{[f]_z:f\in\cc{\frk{c}} }$ and $\frk{b}=\ip{[f]_z:f\in\frk{c}}$, so that $\frk{b}\subset\frk{a}$. Let $N$ be the cardinality of the set $\{\alpha\in \bN^d:|\alpha|\leq \mu-1\}$ and
	consider the surjective linear map
	\[
	\Delta: \M_d\to \bC^N
	\]
	defined as
	\[
	\Delta(\psi)=\left(\frac{\pd^\alpha\psi}{\pd x^\alpha}(z)\right)_{|\alpha|\leq \mu-1}
	\]
	for every $\psi\in \M_d$. By virtue of Equation (\ref{Eq:derivative}), we see that $\Delta$ is weak-$*$ continuous, so that 
	\[
	\Delta(\ol{\fc})=\Delta(\fc).
	\]
	Fix $f\in \ol{\fc}$. By the previous equality, we find $g\in \fc$ with the property that $\Delta(f-g)=0$, whence 
	\[
	[f]_z-[g]_z\in \widetilde{\fm_z}^\mu \subset \fb.
	\]
	Finally, we find that
	\[
	[f]_z=[g]_z+([f]_z-[g]_z)\in \fb.
	\]
	We conclude that $\fa\subset \fb$ as desired.
	\end{proof}

Next, we introduce a mechanism to move between ideals of germs of holomorphic functions and ideals of polynomials.
Given an open subset $U\subset \bC^d$, a subset of functions $F\subset \O(U)$ and a point $z\in U$, we define the \textit{polynomial ideal determined by $F$ at $z$} to be
\[ \PI(F,z)=\{p\in\CC[x_1,\dots,x_d]: [p]_z\in \ip{[f]_z:f\in F}\}. \]
We now verify that that this construction yields nothing new if we start with an ideal of polynomials  $\fa\subset \bC[x_1,\ldots,x_d]$, provided that $\fa$ contains some power of the maximal ideal $\fm_z$.

\begin{lemma}
	Let $\frk{a}\subset \CC[x_1,\dots,x_d]$ be an ideal of polynomials, and let $z\in\CC^d$. 
	Assume that	there is a positive integer $\mu$ such that $\frk{m}_z^{\mu}\subset \frk{a}.$ Then, we have that
	$\PI(\frk{a},z)=\frk{a}. $
	\label{L:PIofPI}
\end{lemma}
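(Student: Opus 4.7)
The containment $\fa\subset \PI(\fa,z)$ is immediate from the definition of $\PI(\fa,z)$, since every polynomial $p\in\fa$ has germ $[p]_z\in\langle [f]_z:f\in\fa\rangle$ trivially. The work is therefore to prove the reverse containment $\PI(\fa,z)\subset \fa$.

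So fix $p\in \PI(\fa,z)$. By definition, we may write
\[
[p]_z=\sum_{i=1}^k [g_i]_z\,[f_i]_z
\]
for some $f_1,\ldots,f_k\in\fa$ and germs $[g_1]_z,\ldots,[g_k]_z\in \O(z)$. The plan is to replace each holomorphic germ $[g_i]_z$ by a polynomial that agrees with $g_i$ to sufficiently high order at $z$, with the gap absorbed by the hypothesis $\fm_z^\mu\subset\fa$. Concretely, for each $i$, I would expand $g_i$ in its Taylor series about $z$ in a neighbourhood of $z$, and let $q_i\in\bC[x_1,\ldots,x_d]$ be the Taylor polynomial of $g_i$ at $z$ of degree strictly less than $\mu$. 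Then $[g_i-q_i]_z\in\widetilde{\fm_z}^\mu$, so term by term
\[
[p]_z-\sum_{i=1}^k [q_i]_z[f_i]_z=\sum_{i=1}^k [g_i-q_i]_z[f_i]_z\in \widetilde{\fm_z}^\mu.
\]
Now set $r=p-\sum_{i=1}^k q_i f_i$, which is an honest polynomial, and whose germ $[r]_z$ lies in $\widetilde{\fm_z}^\mu$.

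The key observation left to verify, which is the main technical step, is that a polynomial $r$ whose germ at $z$ lies in $\widetilde{\fm_z}^\mu$ must already lie in $\fm_z^\mu$. This is where one must be careful: a priori $\widetilde{\fm_z}^\mu$ is an ideal in the potentially much larger ring $\O(z)$. However, $[r]_z\in\widetilde{\fm_z}^\mu$ translates exactly into the vanishing of all partial derivatives of $r$ at $z$ of order less than $\mu$, i.e.\ $\frac{\partial^\alpha r}{\partial x^\alpha}(z)=0$ for all $\alpha\in\bN^d$ with $|\alpha|<\mu$. Expanding the polynomial $r$ about $z$ then shows $r\in\fm_z^\mu$. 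Hence $r\in\fm_z^\mu\subset\fa$, and since each $q_i f_i\in\fa$ because $f_i\in\fa$, we conclude $p=r+\sum_i q_i f_i\in\fa$, as required.

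I expect the main obstacle to be purely notational: carefully identifying the germ-level ideal $\widetilde{\fm_z}^\mu$ with the differential vanishing conditions so that a polynomial caught inside it is forced back into the polynomial ideal $\fm_z^\mu$. All other steps are formal manipulations with Taylor expansions and the ideal property of $\fa$.
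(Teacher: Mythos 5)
Your proposal is correct and follows essentially the same route as the paper's proof: split each holomorphic germ into a polynomial Taylor truncation plus a remainder germ in $\widetilde{\fm_z}^\mu$, note that the resulting difference $p - \sum q_i f_i$ is a polynomial whose germ lies in $\widetilde{\fm_z}^\mu$ and hence belongs to $\fm_z^\mu\subset\fa$, and conclude. Your added remark spelling out that a polynomial with germ in $\widetilde{\fm_z}^\mu$ actually lies in $\fm_z^\mu$ (via vanishing of low-order derivatives) is exactly the implicit step the paper uses.
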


\begin{proof}
	We trivially have that $\frk{a}\subset \PI(\frk{a},z)$. To prove the reverse inclusion, we fix $p\in \PI(\frk{a},z)$. By definition, this means that there are $p_1,\cdots,p_m\in \frk{a}$ and $[f_1]_z,\cdots,[f_m]_z\in\mc{O}(z)$ such that
	\[ [p]_z=\sum_{j=1}^m [p_j]_z[f_j]_z. \]
	Upon writing each $f_j$ as a power series convergent around $z$, we see that there is another function $g_j$ holomorphic near $z$ such that 
	$
	[g_j]_z\in \widetilde{\fm_z}^\mu
	$ 
	and a polynomial $r_j$ such that 
	\[
	[f_j]_z=[r_j]_z+[g_j]_z.
	\]
	In particular, we see that $p_jr_j\in \fa$ for every $1\leq j\leq m$ and thus
	\[
	\sum_{j=1}^m p_j r_j\in \fa.
	\]
	On the other hand, we see that
	\[ [p]_z-\sum_{j=1}^m [p_j]_z[ r_j]_z=\sum_{j=1}^m [p_j]_z[g_j]_z\in \widetilde{\fm_z}^\mu, \]
	and since $p-\sum_{j=1}^m p_j r_j$ is a polynomial, this means that
	\[
	p-\sum_{j=1}^m p_j r_j\in  \frk{m}_z^{\mu}\subset \fa.
	\]
	Finally, we see that
	\[
	p=\left( p-\sum_{j=1}^m p_j r_j\right)+\sum_{j=1}^m p_j r_j\in \fa. \qedhere
	\]
\end{proof}

The next result shows that if $z$ is an isolated point of $\Z_U(F)$, then the polynomial ideal determined by $F$ at $z$ contains all the relevant information about $F$.

\begin{lemma}
	Let $U\subset\bC^d$ be an open subset and let $z\in U$. Let $F\subset \mc{O}(U)$ be a subset with the property that  $z$ is an isolated point of $\Z_U(F)$. Then
	\[ \ip{[p]_z:p\in\frk{p}(F,z)} = \ip{[f]_z:f\in F}. \]
	Furthermore, the radical of $\frk{p}(F,z)$ is $\frk{m}_z$, and there is a positive integer $\mu$ such that $\fm_z^\mu\subset \fp(F,z)$.
	\label{L:GetPIs}
\end{lemma}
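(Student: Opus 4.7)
The plan hinges on first reducing the problem to a finitely generated ideal of germs. By Theorem \ref{T:GZZG}, I would find a neighborhood $W\subset U$ of $z$ and finitely many $f_1,\ldots,f_k\in F$ such that $\Z_U(F)\cap W=\Z_W(f_1,\ldots,f_k)$ and $\ip{[f]_z:f\in F}=\ip{[f_1]_z,\ldots,[f_k]_z}$ in $\O(z)$. Since $z$ is isolated in $\Z_U(F)$, after shrinking $W$ I may assume $\Z_W(f_1,\ldots,f_k)=\{z\}$.

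The next, and most substantial, step is to produce an exponent $\mu$ with $\widetilde{\fm_z}^\mu\subset \ip{[f_1]_z,\ldots,[f_k]_z}$. For this I would appeal to the Nullstellensatz for germs of holomorphic functions (available in the same reference used in Theorem \ref{T:GZZG}): since the germ of $\Z_W(f_1,\ldots,f_k)$ at $z$ is the single point $z$, the radical of $\ip{[f_1]_z,\ldots,[f_k]_z}$ in the Noetherian local ring $\O(z)$ equals the maximal ideal $\widetilde{\fm_z}$. Because $\widetilde{\fm_z}$ is generated by the $d$ germs $[x_i-z_i]_z$, each has some power in the ideal, and a pigeonhole argument produces the desired $\mu$. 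This germ-level containment immediately yields $\fm_z^\mu\subset \fp(F,z)$, since $\fm_z^\mu$ is generated in $\bC[x_1,\ldots,x_d]$ by the monomials $(x-z)^\alpha$ with $|\alpha|=\mu$, whose germs at $z$ generate $\widetilde{\fm_z}^\mu$.

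With the power containment secured, the main equality $\ip{[p]_z:p\in \fp(F,z)}=\ip{[f]_z:f\in F}$ follows by the same truncation argument used in Lemma \ref{L:PIofPI}. The $(\subset)$ direction is tautological from the definition of $\fp(F,z)$. For the reverse, given $f\in F$ I would split its Taylor expansion at $z$ as $[f]_z=[r]_z+[g]_z$, with $r$ a polynomial of degree less than $\mu$ and $[g]_z\in\widetilde{\fm_z}^\mu\subset \ip{[p]_z:p\in\fp(F,z)}$; then $[r]_z=[f]_z-[g]_z$ lies in $\ip{[f]_z:f\in F}$, so $r\in\fp(F,z)$ by definition, and both pieces lie in $\ip{[p]_z:p\in\fp(F,z)}$. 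The radical computation is then formal: on one hand $p\in\fm_z$ forces $p^\mu\in\fm_z^\mu\subset\fp(F,z)$, so $\fm_z\subset \sqrt{\fp(F,z)}$; on the other hand, any $p\in\sqrt{\fp(F,z)}$ satisfies $[p^n]_z\in \ip{[f]_z:f\in F}\subset\widetilde{\fm_z}$ for some $n$ (since every $[f]_z$ vanishes at $z$), forcing $p(z)=0$ and thus $p\in\fm_z$.

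The main obstacle is isolating the correct analytic input, namely the germ version of the Nullstellensatz together with Noetherianity of $\O(z)$; once this is in place, all remaining manipulations are formal and parallel to Lemma \ref{L:PIofPI}.
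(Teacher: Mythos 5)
Your proposal is correct and follows essentially the same route as the paper: invoke the Nullstellensatz for the local ring $\O(z)$ to conclude that the radical of $\ip{[f]_z:f\in F}$ is the maximal ideal $\widetilde{\fm_z}$, extract a power $\mu$ with $\widetilde{\fm_z}^\mu$ in the ideal of germs (and hence $\fm_z^\mu\subset\fp(F,z)$), then run the truncation argument from Lemma \ref{L:PIofPI} to get the equality of ideals and read off the radical of $\fp(F,z)$ formally. The one cosmetic difference is that you first reduce to finitely many generators via Theorem \ref{T:GZZG}; the paper instead applies the Nullstellensatz directly to the possibly infinitely generated ideal of germs (which is automatically finitely generated anyway since $\O(z)$ is Noetherian), but this changes nothing of substance.
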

\begin{proof}
	For convenience, throughout the proof we let
	\[
	\fa=\ip{[f]_z:f\in F}.
	\]
	Plainly, $\ip{[p]_z:p\in\frk{p}(F,z)}\subset \frk{a}$. To establish the converse, we first make a preliminary observation.
	Because $z$ is an isolated point of $\Z_U(F)$, it follows from the Nullstellensatz for $\mc{O}(z)$ (see \cite[Theorems II.E.20 and III.A.7]{GunningRossi}) that the radical of $\frk{a}$ is $\widetilde{\fm_z}$.	In particular, for each $1\leq j\leq d$ there is a positive integer $\mu_j$  such that $[(x_j-z_j)^{\mu_j}]_z\in \frk{a}$. Taking \
	\[
	\mu=\mu_1+\mu_2+\ldots+\mu_d
	\]
	we see that $[(x-z)^\alpha]_z\in \frk{a}$ whenever $|\alpha|\geq \mu$, so that 
	\[
	\langle[q]_z:q\in \frk{m}_z^\mu\rangle\subset \fa
	\]
	and $ \frk{m}_z^\mu\subset \frk{p}(F,z)$. This immediately implies that the radical of $ \frk{p}(F,z)$ contains $\fm_z$, and hence is equal to $\fm_z$ by maximality. Fixing now $[f]_z\in \fa$, we will show that $[f]_z\in\ip{[p]_z:p\in\frk{p}(F,z)}$, thus completing the proof. Upon writing $f$ as a power series convergent around $z$, we see that there is another function $g$ holomorphic near $z$ such that 
	\[
	[g]_z\in \ip{[q]_z:q\in \frk{m}_z^\mu}
	\]
	and a polynomial $r$ such that 
	\[
	[f]_z=[r]_z+[g]_z.
	\]
	It thus suffices to show that $[r]_z$ and $[g]_z$ belong to $\ip{[p]_z:p\in\frk{p}(F,z)}$. Using that 
	$
	\{[q]_z:q\in \frk{m}_z^\mu\}\subset \fa
	$
	we infer  $[g]_z\in \frk{a}$, so it follows that 
	\[
	[r]_z=[f]_z-[g]_z\in\frk{a}
	\]
	and therefore $r\in\frk{p}(F,z)$ by construction of $\frk{p}(F,z)$, so we indeed have
	$
	[r]_z\in\ip{[p]_z:p\in\frk{p}(F,z)}.
	$
	On the other hand, we have $\frk{m}_z^\mu\subset\frk{p}(F,z)$, so using that  $[g]_z\in \langle [q]_z:q\in \frk{m}_z^\mu\rangle$ we find
	$
	[g]_z\in \ip{[p]_z:p\in\frk{p}(F,z)}
	$
	as desired. We have thus shown that
	$
	\ip{[p]_z:p\in\frk{p}(F,z)}=\fa.
	$
	\end{proof}

The previous lemma allows us to make an important definition that we require later. Let $U\subset\bC^d$ be an open subset and let $z\in U$. Let $F\subset \mc{O}(U)$ be a subset with the property that  $z$ is an isolated point of $\Z_U(F)$. By Lemma \ref{L:GetPIs}, there is a positive integer $\mu$ such that $\fm_z^\mu\subset \frk{p}(F,z)$. We may thus define the \textit{polynomial order} of $F$ at $z$ to be the smallest positive integer $\kappa$ such that
 $\fm_z^{\kappa+1}\subset \frk{p}(F,z)$. In the special case where $\Z_U(F)$ is a discrete subset of $U$, then $F$ has a well-defined polynomial order at every $z\in \Z_U(F)$.

\section{Higher order vanishing ideals}\label{S:ideals}

In this section, we consider higher order vanishing ideals of interpolating sequences. In subsequent sections, these objects will form the basis of various operator theoretic problems. For now, we construct such ideals, and show that the multivariate setting supports a wealth of drastically different behaviours, making it much richer and more complicated than the familiar univariate situation. In turn, this provides motivation for the work appearing in later sections.

Throughout this section, $\Lambda\subset \bB_d$ will be a countable set.  For each non-negative integer $\kappa$, we define the \emph{vanishing ideal of $\Lambda$ of order $\kappa$}, denoted by $\fv_\kappa(\Lambda)$, to be the collection of functions $\psi\in \M_d$ such that
\[
\frac{\pd^\alpha \psi}{\pd x^\alpha}(z)=0
\]
for every $z\in \Lambda$ and every $\alpha\in \bN^d$ such that $|\alpha|\leq \kappa$. It follows from Equation (\ref{Eq:derivative}) that $\fv_\kappa(\Lambda)$ is a weak-$*$ closed ideal of $\M_d$. Our attention will be mostly devoted to the case where $\Lambda$ is an interpolating sequence (see Subsection \ref{SS:DA}). In this case, we can identify the zero set of $\fv_\kappa(\Lambda)$.

\begin{theorem}
	Let $\grL\subset \bB_d$ be an interpolating sequence and let $\kappa$ be a non-negative integer. Then, we have that 
	$
	\Lambda=\Z_{\bB_d}(\fv_\kappa(\Lambda)).
	$
	\label{T:ISDim0}
\end{theorem}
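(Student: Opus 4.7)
The containment $\Lambda \subset \mc{Z}_{\bB_d}(\fv_\kappa(\Lambda))$ is immediate: for any $\psi \in \fv_\kappa(\Lambda)$ and $\lambda \in \Lambda$, taking the multi-index $\alpha = 0$ (which satisfies $|\alpha| = 0 \leq \kappa$) in the defining condition gives $\psi(\lambda) = 0$. So the work is entirely in the reverse inclusion: given $w \in \bB_d \setminus \Lambda$, I need to produce some $\psi \in \fv_\kappa(\Lambda)$ with $\psi(w) \neq 0$.

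The plan is to exploit the interpolating property of $\Lambda$ through Corollary \ref{C:ISplus1}. Applying that corollary, the enlarged sequence $\{w\} \cup \Lambda$ is again an interpolating sequence for $\M_d$. Feeding the bounded data $a_w = 1$ and $a_{\lambda_n} = 0$ (for every $\lambda_n \in \Lambda$) into the defining property of an interpolating sequence yields a multiplier $\phi \in \M_d$ such that
\[
\phi(w) = 1 \qand \phi(\lambda_n) = 0 \text{ for every } n \geq 1.
\]
In other words, $\phi$ belongs to $\fv_0(\Lambda)$, but it only vanishes to zeroth order on $\Lambda$.

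To upgrade this to vanishing of order $\kappa$, I would simply take the power $\psi = \phi^{\kappa+1}$, which lies in $\M_d$ since $\M_d$ is an algebra. The key observation is that, by the Leibniz rule, for any multi-index $\alpha$ with $|\alpha| \leq \kappa$ the derivative $\partial^\alpha(\phi^{\kappa+1})$ expands as a finite sum of products of $\kappa+1$ derivatives $\partial^{\beta_1}\phi \cdots \partial^{\beta_{\kappa+1}}\phi$ with $\beta_1 + \cdots + \beta_{\kappa+1} = \alpha$. Since $|\alpha| \leq \kappa < \kappa+1$, at least one index $\beta_j$ must be zero by pigeonhole, so each such product contains the factor $\phi(\lambda_n) = 0$ and hence vanishes at every $\lambda_n \in \Lambda$. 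Therefore $\psi \in \fv_\kappa(\Lambda)$, while $\psi(w) = \phi(w)^{\kappa+1} = 1 \neq 0$, which shows $w \notin \mc{Z}_{\bB_d}(\fv_\kappa(\Lambda))$ and completes the proof.

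There isn't a serious obstacle here; the entire argument is essentially a transcription of the interpolation property combined with the elementary derivative-vanishing observation for powers. The only slightly nontrivial input is Corollary \ref{C:ISplus1}, which ensures the interpolation hypothesis survives the adjunction of the point $w$.
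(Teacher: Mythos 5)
Your proof is correct and follows the paper's argument exactly: adjoin $w$ to $\Lambda$ via Corollary~\ref{C:ISplus1}, obtain a multiplier $\phi$ vanishing on $\Lambda$ with $\phi(w)=1$, and observe that $\phi^{\kappa+1}\in\fv_\kappa(\Lambda)$. You simply spell out the Leibniz-rule justification that the paper leaves implicit.
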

\begin{proof}
It is immediate from the definition of $\fv_\kappa(\Lambda)$ that $\Lambda\subset \Z_{\bB_d}(\fv_\kappa(\Lambda))$. Conversely, let $w\in\BB_d\setminus \grL$ and invoke Corollary \ref{C:ISplus1} to find $\theta\in\M_d$ vanishing on $\Lambda$ such that $\theta(w)=1$. Then, we see that $\theta^{\kappa+1}\in \fv_\kappa(\Lambda)$ so $w\notin \Z_{\bB_d}(\fv_\kappa(\Lambda))$ and the proof is complete.
\end{proof}

The class of ideals we will be interested in for the rest of the paper are those that contain $\fv_\kappa(\Lambda)$ for some $\kappa$; we will typically refer to them as \emph{higher order vanishing ideals}. One useful property that such ideals possess is that they have uniformly bounded polynomial order at every point in their zero set.

\begin{lemma}
	Let $\grL\subset \bB_d$ be an interpolating sequence and let $\kappa$ be a non-negative integer. Let $\frk{a}\subset \mc{M}_d$ be an ideal  such that $\frk{a}\supset \frk{v}_\kappa(\grL)$. Then, for every $z\in \Z_{\bB_d}(\fa)$,  the polynomial order of $\fa$ at $z$ is at most $\kappa$.
	\label{L:FinVanOrd}
\end{lemma}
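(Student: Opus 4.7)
The plan is to unwind what the polynomial order means and then directly exhibit enough elements of $\fa$ whose germs at $z$ generate $\widetilde{\fm_z}^{\kappa+1}$. Recall that the polynomial order being at most $\kappa$ means precisely that $\fm_z^{\kappa+1}\subset \fp(\fa,z)$, i.e.\ that $[p]_z\in \langle [f]_z: f\in\fa\rangle$ for every polynomial $p$ vanishing to order at least $\kappa+1$ at $z$. So the target is to produce, for any such $p$, a multiplier $g\in\fa$ whose germ at $z$ equals $[p]_z$ up to a unit in $\O(z)$.

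First I would locate $z$. Because $\fa\supset \fv_\kappa(\Lambda)$, one has $\Z_{\bB_d}(\fa)\subset \Z_{\bB_d}(\fv_\kappa(\Lambda))$, which by Theorem \ref{T:ISDim0} equals $\Lambda$; hence $z=\lambda_n$ for some $n$. Next, since $\Lambda$ is interpolating, it is in particular strongly separated, so there is a multiplier $\phi_n\in\M_d$ such that $\phi_n(\lambda_n)=1$ and $\phi_n(\lambda_m)=0$ for every $m\neq n$ (one can rescale an interpolating solution, or directly use the definition of strong separation together with normalization). The key feature of $\phi_n$ is that its germ at $z$ is a unit in $\O(z)$, while its germs at every other point of $\Lambda$ lie in the maximal ideal of that point.

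Now given $p\in\fm_z^{\kappa+1}$, consider $g=p\,\phi_n^{\kappa+1}\in\M_d$. I would verify that $g\in\fv_\kappa(\Lambda)$ by a local check: at $z$, the factor $p$ contributes only Taylor terms of order $\geq\kappa+1$, so all derivatives of $g$ of order $\leq \kappa$ vanish there; at $\lambda_m$ with $m\neq n$, one has $[\phi_n]_{\lambda_m}\in\widetilde{\fm_{\lambda_m}}$, hence $[\phi_n^{\kappa+1}]_{\lambda_m}\in\widetilde{\fm_{\lambda_m}}^{\kappa+1}$, which again kills all derivatives of $g$ of order $\leq\kappa$. Therefore $g\in\fv_\kappa(\Lambda)\subset\fa$.

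Finally, since $\phi_n(z)=1$, the germ $[\phi_n^{\kappa+1}]_z$ is a unit in the local ring $\O(z)$, so it has an inverse $[u]_z\in\O(z)$. Multiplying the relation $[g]_z=[p]_z[\phi_n^{\kappa+1}]_z$ by $[u]_z$ and using that $\langle[f]_z:f\in\fa\rangle$ is an ideal in $\O(z)$, I conclude
\[ [p]_z=[u]_z\,[g]_z\in \langle[f]_z:f\in\fa\rangle, \]
so $p\in\fp(\fa,z)$. This shows $\fm_z^{\kappa+1}\subset \fp(\fa,z)$, and by definition the polynomial order of $\fa$ at $z$ is at most $\kappa$. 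There is no real obstacle here beyond keeping the germ-level bookkeeping straight; the only mild subtlety is remembering to use that the localization step allows us to invert $[\phi_n^{\kappa+1}]_z$, which is exactly why the strong separation property supplied by $\Lambda$ being interpolating is enough.
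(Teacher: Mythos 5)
Your proposal is correct and follows essentially the same route as the paper: choose a multiplier $\theta$ (your $\phi_n$) separating $z$ from the rest of $\Lambda$ with $\theta(z)=1$, multiply a polynomial vanishing to order $\geq\kappa+1$ at $z$ by $\theta^{\kappa+1}$ to land in $\fv_\kappa(\Lambda)\subset\fa$, and then invert $[\theta]_z$ in the local ring $\O(z)$ to recover $[p]_z$. The only (welcome) addition is your explicit observation that $\Z_{\bB_d}(\fa)\subset\Lambda$ via Theorem~\ref{T:ISDim0}, a step the paper's proof takes for granted when it begins with ``Fix $z\in\Lambda$.''
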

\begin{proof}
	Fix $z\in\grL$ and let $\theta\in\mc{M}_d$ be a multiplier vanishing on $\Lambda\setminus \{z\}$ and such that $\theta(z)=1$.
	Let $\alpha\in\NN^d$ such that $|\alpha|>k$. Then, it is readily verified that $(x-z)^\alpha\theta^{\kappa+1}\in \fv_\kappa(\Lambda)$, so that $(x-z)^\alpha\theta^{\kappa+1}\in \fa$ and  $[(x-z)^\alpha\theta^{\kappa+1}]_z\in \ip{[f]_z:f\in\frk{a}}$. Next, the fact that $\theta(z)=1$ implies that $[\theta]_z$ is invertible in $\O(z)$, so it follows that $(x-z)^\alpha\in\frk{p}(\frk{a},z)$. This shows that $\fm_z^{\kappa+1}\subset \fp(\fa,z)$.
\end{proof}

Next, we aim to elucidate the structure of higher order vanishing ideals.
For this purpose, it is useful to first consider the single-variable case.

\begin{example}\label{E:d=1vanishing}
Let $\Lambda\subset \bB_1$ be an interpolating sequence, let $\kappa$ be a non-negative integer and let $\fa\subset \M_1$ be a weak-$*$ closed ideal containing $\fv_\kappa(\Lambda)$. Let $\theta\in \M_1$ denote the Blaschke product with a simple zero at every point of $\Lambda$. Using the classical inner-outer factorization along with the factorization of inner functions as Blaschke products and singular inner functions, it is readily seen that the fact that $\fa$ contains $\fv_\kappa(\Lambda)$ is equivalent to $\theta^{\kappa+1}\in \fa$.
There is an inner function $\tau \in \M_1$ such that $\fa=\tau \M_1$. Then, we see that $\tau$ divides $\theta^{\kappa+1}$, so that $\tau$ is itself a Blaschke product with
\[
\Z_{\bB_1}(\tau)=\Z_{\bB_1}(\fa)\subset \Z_{\bB_1}(\fv_\kappa(\Lambda))=\Lambda
\]
 where the last equality follows from Theorem \ref{T:ISDim0}. Write
\[
\tau(x)=\prod_{z\in \Z_{\bB_1}(\fa)} \left(\frac{\ol{z}}{|z|} \frac{x-z}{1-\ol{z}x}\right)^{n_z}, \quad x\in \bB_1
\]
where each $n_z$ is a positive integer at most $\kappa+1$. It is then readily verified that 
\begin{equation}\label{Eq:d=1vanishing}
\fm_z^{n_z}=\fp(\fa,z)
\end{equation}
for every $z\in \Z_{\bB_1}(\fa)$. 
\qed
\end{example}

Our next task is to show that the simple behaviour witnessed in the previous example is special to the univariate setting. Some preparation is required.

\begin{lemma}\label{L:GetIdems}
	Let $\grL\subset\bB_d$ be an interpolating sequence and let $\kappa$ be a non-negative integer.  For each subset $\Omega\subset \Lambda$, there is a multiplier $\theta_\Omega\in \M_d$ with the following properties.
	\begin{enumerate}
	
	\item[\rm{(i)}] For each subset $\Omega\subset \Lambda$, we have $\theta_\Omega\in \fv_\kappa(\Lambda\setminus \Omega)$ and $1-\theta_\Omega\in \fv_\kappa(\Omega)$.
	
	\item[\rm{(ii)}] We have
	\[
	\sup_{\Omega\subset \Lambda}\|\theta_\Omega\|<\infty.
	\]
	
	\item[\rm{(iii)}] The ideal
	\[
	\fv_\kappa(\Lambda)+\sum_{z\in\grL} \theta_{\{z\}}\mc{M}_d
	\]
	is weak-$*$ dense in $\mc{M}_d$.
	\end{enumerate}
 \end{lemma}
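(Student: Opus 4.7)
My plan is to derive (i) and (ii) from the ordinary (order zero) interpolation property of $\Lambda$, and then to handle (iii) separately by a Hilbert space argument based on the reproducing kernel of $H^2_d$.

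For (i) and (ii), I would first use the interpolation hypothesis on $\Lambda$ together with the open mapping theorem to produce a constant $M > 0$ and, for every subset $\Omega \subset \Lambda$, a multiplier $\rho_\Omega \in \M_d$ with $\|\rho_\Omega\| \leq M$ such that $\rho_\Omega|_\Omega \equiv 1$ and $\rho_\Omega|_{\Lambda \setminus \Omega} \equiv 0$. I would then set
\[
\theta_\Omega = \rho_\Omega^{\kappa+1} \sum_{j=0}^{\kappa} \binom{\kappa+j}{j} (1-\rho_\Omega)^j.
\]
The choice is dictated by the formal power series identity $(1-t)^{-(\kappa+1)} = \sum_{j \geq 0} \binom{\kappa+j}{j} t^j$, truncation of which gives $(1-t)^{\kappa+1} \sum_{j=0}^\kappa \binom{\kappa+j}{j} t^j \equiv 1 \pmod{t^{\kappa+1}}$ in $\CC[[t]]$. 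Substituting $t = 1-\rho_\Omega$ yields $\theta_\Omega \equiv 1 \pmod{(1-\rho_\Omega)^{\kappa+1}}$ in $\M_d$. At each $z \in \Omega$ the function $1-\rho_\Omega$ vanishes, so the power $(1-\rho_\Omega)^{\kappa+1}$ vanishes to order $\kappa$, forcing $1-\theta_\Omega \in \fv_\kappa(\Omega)$. At each $z \in \Lambda \setminus \Omega$, the explicit factor $\rho_\Omega^{\kappa+1}$ vanishes to order $\kappa$, forcing $\theta_\Omega \in \fv_\kappa(\Lambda \setminus \Omega)$. The uniform norm bound $\|\theta_\Omega\| \leq M^{\kappa+1} \sum_{j=0}^\kappa \binom{\kappa+j}{j}(1+M)^j$ is then immediate from the submultiplicativity of $\|\cdot\|_{\M_d}$.

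For (iii), I would invoke Theorem \ref{T:Beurling}(ii) to reduce the weak-$*$ density of the ideal $\mathfrak{J} := \fv_\kappa(\Lambda) + \sum_{z \in \Lambda} \theta_{\{z\}} \M_d$ to showing $[\mathfrak{J} H^2_d] = H^2_d$, equivalently the triviality of the orthogonal complement. This complement lies in $(\fv_\kappa(\Lambda) H^2_d)^\perp$, which a direct computation using \eqref{Eq:derivative} shows equals the closed linear span $\H_\kappa$ of the higher-order reproducing kernels $\{\partial^\alpha k_z / \partial \bar z^\alpha : z \in \Lambda, |\alpha| \leq \kappa\}$. A Leibniz-rule calculation combined with the jet properties of $\theta_{\{z_0\}}$ from (i) gives
\[
M_{\theta_{\{z_0\}}}^* \frac{\partial^\alpha k_w}{\partial \bar w^\alpha} = \begin{cases} \partial^\alpha k_{z_0} / \partial \bar z_0^\alpha, & w = z_0, \\ 0, & w \in \Lambda \setminus \{z_0\}, \end{cases} \qquad |\alpha| \leq \kappa,
\]
so that $M_{\theta_{\{z_0\}}}^*$ restricted to $\H_\kappa$ acts (on generators) as a projection onto $\spn\{\partial^\alpha k_{z_0}/\partial \bar z_0^\alpha : |\alpha| \leq \kappa\}$ along the derivatives at the other points of $\Lambda$. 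Any $f$ in the orthogonal complement of $\mathfrak{J} H^2_d$ is killed by every such projection, and the goal is to conclude $f = 0$.

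The technical heart of the argument, and the principal obstacle, is this last implication: turning the vanishing $M_{\theta_{\{z_0\}}}^* f = 0$ for every $z_0 \in \Lambda$ into the vanishing of $f \in \H_\kappa$ requires a Riesz-basis-type independence for the full family $\{\partial^\alpha k_z / \partial \bar z^\alpha\}_{z \in \Lambda, |\alpha| \leq \kappa}$. For fixed $\kappa$, I expect this to follow from the Carleson measure condition of Theorem \ref{T:AHMR} by a finite enhancement, since norms of jets of bounded order can be controlled by a constant multiple of the ordinary Carleson sum, but formulating this carefully — and ruling out any pathology introduced by summing over $z_0$ — is the delicate step.
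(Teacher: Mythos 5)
Your argument for (i) and (ii) is correct and rests on the same underlying idea as the paper: use the interpolating property and the open mapping theorem to get a uniformly bounded family $\rho_\Omega$ of $\{0,1\}$-valued-on-$\Lambda$ multipliers, then post-compose with a suitable polynomial to force vanishing to order $\kappa$ from both sides. Your truncated-geometric-series formula does the job; the paper uses the simpler
\[
\theta_\Omega = 1-(1-\phi_\Omega^{\kappa+1})^{\kappa+1},
\]
which vanishes to order $\kappa$ on $\Lambda\setminus\Omega$ because it is a polynomial without constant term in $\phi_\Omega^{\kappa+1}$, and has $1-\theta_\Omega=(1-\phi_\Omega^{\kappa+1})^{\kappa+1}$ vanishing to order $\kappa$ on $\Omega$. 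Either choice works and gives the uniform bound.

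Your approach to (iii) is genuinely different from the paper's, and, as you yourself flag, it is incomplete at its critical step. Beyond the acknowledged Riesz-basis gap, there is an earlier soft spot: you need $(\fv_\kappa(\Lambda)H^2_d)^\perp$ to be \emph{exactly} the closed span $\H_\kappa$ of the jets $\partial^\alpha k_z/\partial\bar z^\alpha$. What comes for free from \eqref{Eq:derivative} and the Leibniz rule is only the inclusion $\H_\kappa\subset(\fv_\kappa(\Lambda)H^2_d)^\perp$; the reverse inclusion would require knowing that any $f\in H^2_d$ whose jets to order $\kappa$ vanish on $\Lambda$ already lies in $[\fv_\kappa(\Lambda)H^2_d]$, which is a nontrivial approximation statement since $f$ need not be a multiplier. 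Even granting that, the argument still needs $\{\partial^\alpha k_z/\partial\bar z^\alpha: z\in\Lambda,\ |\alpha|\le\kappa\}$ to be a Riesz basis of $\H_\kappa$ and needs the diagonal action of $M_{\theta_{\{z_0\}}}^*$ on finite linear combinations to extend correctly to the closed span; neither is established, and pinning them down would amount to proving a higher-order analogue of the Carleson measure/separation characterization of Theorem \ref{T:AHMR}, a significant undertaking in its own right.

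The paper's proof of (iii) avoids Hilbert space geometry entirely and exploits the uniform bound from (ii) directly. Take an increasing exhaustion $\Omega_1\subset\Omega_2\subset\cdots$ of $\Lambda$ by finite sets; show the finite sum $\psi_n=\theta_{\Omega_n}-\sum_{z\in\Omega_n}\theta_{\{z\}}$ lies in $\fv_\kappa(\Lambda)$ by a pointwise jet calculation on $\Lambda$ using (i); use the uniform bound on $\{\theta_{\Omega_n}\}$ to extract a weak-$*$ limit point $\tau$, which then belongs to the weak-$*$ closure of $\fv_\kappa(\Lambda)+\sum_{z\in\Lambda}\theta_{\{z\}}\M_d$; and check (again pointwise, using weak-$*$ continuity of jet evaluation via \eqref{Eq:derivative}) that $1-\tau\in\fv_\kappa(\Lambda)$, so that $1=(1-\tau)+\tau$ lies in the weak-$*$ closure of the ideal. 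I would recommend replacing your Beurling-type reduction with this compactness argument; it sidesteps both of the open issues above and is considerably shorter.
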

\begin{proof}
	Since $\Lambda$ is an interpolating sequence, an application of the open mapping theorem yields the existence of a constant $C>0$ such that for every bounded function $f$ on $\Lambda$, there is a corresponding multiplier in $\M_d$ whose restriction to $\Lambda$ coincides with $f$ and whose norm is at most
	\[
	C\sup_{z\in \Lambda}|f(z)|.
	\]
	In particular, for every $\Omega\subset \Lambda$ there is $\phi_\Omega\in \M_d$ whose restriction to $\Lambda$ agrees with the characteristic function of $\Omega$ and such that $\|\phi_\Omega\|\leq C$. For each $\Omega\subset \Lambda$,  we put
	\[ \grj_\grW = 1-(1-\phi_\grW^{\kappa+1})^{\kappa+1}. \]
	It is readily checked that these functions satisfy properties (i) and (ii).
	To establish (iii), let $(\Omega_n)$ be an increasing sequence of finite subsets of $\grL$ such that $\cup_{n=1}^\infty \grW_n=\grL$.
	For each positive integer $n$, we claim that the multiplier
	\[
	\psi_n=\grj_{\grW_n}-\sum_{z\in \grW_n}\grj_{\{z\}}
	\]
	belongs to $\fv_\kappa(\Lambda).$
	To see this, first note that for every $\Omega\subset \Lambda$ we have $\theta_\Omega\in \fv_\kappa(\Lambda\setminus \Omega)$ and $1-\theta_\Omega\in \fv_\kappa(\Omega)$, which implies in particular that
	\begin{equation}\label{Eq:deriv0}
	\frac{\pd^\alpha \grj_{\Omega}}{\pd x^\alpha}(w)=0
	\end{equation}
	for every $w\in \Lambda$ and every $\alpha\in \bN^d$ such that $|\alpha|\geq 1$ and $|\alpha|\leq \kappa$.
	Fix $w\in \Lambda$ and $n\geq 1$. If $w\in \Omega_n$, then we have
	\begin{align*}
	\grj_{\grW_n}(w)-\sum_{z\in \grW_n}\grj_{\{z\}}(w)&=\grj_{\grW_n}(w)-\grj_{\{w\}}(w)\\
	&=1-1=0
	\end{align*}
	while if $w\in\Lambda\bksl \Omega_n$ then we have
	\begin{align*}
	\grj_{\grW_n}(w)-\sum_{z\in \grW_n}\grj_{\{z\}}(w)&=0-0=0.
	\end{align*}
	Furthermore, it follows from Equation (\ref{Eq:deriv0}) that
	\begin{align*}
	\frac{\pd^\alpha}{\pd x^\alpha}\left(\grj_{\grW_n}-\sum_{z\in \grW_n}\grj_{\{z\}} \right)(w)&=0
	\end{align*}
	for every $w\in\Lambda$ and every $\alpha\in \bN^d$ such that $|\alpha|\geq 1$ and $|\alpha|\leq \kappa$.  This establishes the claim that $\psi_n\in \fv_\kappa(\Lambda)$.
	Now, the sequence $(\grj_{ \grW_n})$ is bounded, and hence it has a weak-$*$ limit point $\tau\in \M_d$. Note that $\tau$ is the weak-$*$ limit of
	\[
	\psi_n+\sum_{z\in \grW_n}\grj_{\{z\}}
	\]
	whence $\tau$ lies in the weak-$*$ closure of $\fv_\kappa(\Lambda)+\sum_{z\in\grL} \theta_{\{z\}}\mc{M}_d$.
	On the other hand, for each $z\in \Lambda$, there is $N\geq 1$ such that $z\in \Omega_n$ for every $n\geq N$.  We thus find
	\[
	\tau(z)=\lim_{n\to\infty} \theta_{\Omega_n}(z)=1
	\]
 	and using Equation (\ref{Eq:derivative}) we obtain
	\[
	\frac{\pd^\alpha \tau}{\pd x^\alpha}(z)=\lim_{n\to\infty} \frac{\pd^\alpha \theta_{ \Omega_n}}{\pd x^\alpha}(z)=0
	\]
	for $z\in \Lambda$ and every $\alpha\in \bN^d$ such that $|\alpha|\geq 1$ and $|\alpha|\leq \kappa$, whence $(1-\tau)\in \fv_\kappa(\Lambda)$.  We conclude that $1=(1-\tau)+\tau$ lies in the weak-$*$ closure of
	$\fv_\kappa(\Lambda)+\sum_{z\in\grL} \theta_{\{z\}}\mc{M}_d$, so that this ideal is indeed weak-$*$ dense.
	\end{proof}

We now arrive at the main result of this section, showing that higher order vanishing ideals are plentiful.

\begin{theorem}
	Let $\grL\subset \bB_d$ be an interpolating sequence and let $\kappa$ be a non-negative integer. For each $z\in \Lambda$, let $\fa_z\subset \CC[x_1,\dots,x_d]$ be an ideal containing $\fm_z^{\kappa+1}$. Then, there exists a weak-$*$ closed ideal $\frk{b}\subset \M_d$ containing $\fv_\kappa(\Lambda)$ with the property that $\ZBd(\frk{b})=\grL$ and
	$ \PI(\frk{b},z)=\frk{a}_z  $
	for each $z\in\grL$.
	\label{T:CHVI} 
\end{theorem}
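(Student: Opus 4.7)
The plan is to produce $\frk{b}$ as the weak-$*$ closure of an explicit ideal $\frk{b}_0\subset\M_d$ assembled from the local polynomial data $\{\fa_z\}$ and the localizing multipliers $\theta_{\{z\}}$ supplied by Lemma \ref{L:GetIdems}. Viewing $\bC[x_1,\ldots,x_d]$ as a subring of $\M_d$, I would let $\frk{b}_0$ be the ideal of $\M_d$ generated by $\fv_\kappa(\Lambda)$ together with the multipliers $\{\theta_{\{z\}}p : z\in\Lambda,\ p\in \fa_z\}$, and set $\frk{b}=\wsclos{\frk{b}_0}$. Since $\fv_\kappa(\Lambda)\subset \frk{b}$, Theorem \ref{T:ISDim0} already gives the inclusion $\ZBd(\frk{b})\subset \Lambda$ for free.

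The easy half of the germ identity is $\fa_z\subset \PI(\frk{b},z)$. Indeed, because $\theta_{\{z\}}(z)=1$, the germ $[\theta_{\{z\}}]_z$ is a unit in $\O(z)$, and hence $[p]_z=[\theta_{\{z\}}]_z^{-1}[\theta_{\{z\}}p]_z\in \ip{[f]_z:f\in\frk{b}}$ for every $p\in \fa_z$.

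The substantive step is the reverse inclusion $\PI(\frk{b},z)\subset \fa_z$, which I would obtain by computing the germ at $z$ of each generator of $\frk{b}_0$. A Taylor expansion at $z$ gives $[g]_z\in \widetilde{\fm_z}^{\kappa+1}$ for every $g\in\fv_\kappa(\Lambda)$; for $z'\neq z$ in $\Lambda$, Lemma \ref{L:GetIdems} shows $\theta_{\{z'\}}\in\fv_\kappa(\{z\})$, so $[\theta_{\{z'\}}p]_z\in\widetilde{\fm_z}^{\kappa+1}$ for any polynomial $p$; and for $z'=z$ the germ $[\theta_{\{z\}}p]_z=[\theta_{\{z\}}]_z[p]_z$ arises with $p\in\fa_z$. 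Using $\fm_z^{\kappa+1}\subset \fa_z$ and Lemma \ref{L:PIofPI}, each of these germs lies in the $\O(z)$-ideal $\ip{[q]_z:q\in\fa_z}$, and so does every $\M_d$-linear combination of generators, yielding $\PI(\frk{b}_0,z)\subset \PI(\fa_z,z)=\fa_z$. To upgrade this to the weak-$*$ closure, I would invoke Lemma \ref{L:PIClosures} with $\mu=\kappa+1$; its hypothesis $\widetilde{\fm_z}^{\kappa+1}\subset \ip{[f]_z:f\in\frk{b}_0}$ is free from the observation that $\theta_{\{z\}}(x-z)^\alpha\in\frk{b}_0$ for every $|\alpha|=\kappa+1$, combined once more with the unit property of $[\theta_{\{z\}}]_z$.

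Finally, the inclusion $\Lambda\subset \ZBd(\frk{b})$ drops out of the previous step: the stated conclusion $\PI(\frk{b},z)=\fa_z$ forces each $\fa_z$ to be a proper ideal, whence $\fa_z\subset \fm_z$ and $\ip{[f]_z:f\in\frk{b}}\subset \widetilde{\fm_z}$, so every $f\in\frk{b}$ vanishes at $z$. I expect the weak-$*$ closure step to be the only delicate point of the argument; Lemma \ref{L:PIClosures} is tailor-made for the situation, and verifying its hypothesis is precisely where the higher order vanishing assumption $\fm_z^{\kappa+1}\subset\fa_z$ is used critically to propagate germ information from $\frk{b}_0$ to $\frk{b}$.
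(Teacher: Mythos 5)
Your proposal is correct and follows essentially the same route as the paper: form $\frk{b}$ from the localizers $\theta_{\{z\}}$ supplied by Lemma~\ref{L:GetIdems}, compute germs of the generators (using that $[\theta_{\{z\}}]_z$ is a unit while $[\theta_{\{z'\}}]_z\in\widetilde{\fm_z}^{\kappa+1}$ for $z'\neq z$), and pass through Lemmas~\ref{L:PIClosures} and~\ref{L:PIofPI} to get $\PI(\frk{b},z)=\fa_z$. The only minor organizational difference is that the paper obtains $\Lambda\subset\ZBd(\frk{b})$ directly from the sandwich $\fv_\kappa(\Lambda)\subset\frk{b}\subset\fv_0(\Lambda)$ together with Theorem~\ref{T:ISDim0}, whereas you back it out of the germ identity; both routes work, though both tacitly require each $\fa_z$ to be a \emph{proper} ideal (as does the paper's own proof), since otherwise $\ZBd(\frk{b})=\Lambda$ would fail.
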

\begin{proof}
	By Lemma \ref{L:GetIdems}, for each $z\in\Lambda$ there is a multiplier $\theta_z\in \fv_\kappa(\Lambda\setminus \{z\})$ such that $\theta_z(z)=1$. Since $\fm_z^{\kappa+1}\subset \fa_z$, we see that $\mc{Z}_{\CC^d}(\frk{a}_z)=\{z\}$ and infer $\theta_z\fa_z\subset \fv_0(\Lambda)$ for every $z\in \Lambda$. In particular, we see that the ideal
	\[
	\frk{b}_0=\sum_{z\in \Lambda} \theta_z \fa_z 
	\]
	is contained in $\fv_0(\Lambda)$. Let $\frk{b}$ be the weak-$*$ closure of $\frk{b}_0+\frk{v}_\kappa(\grL)$ in $\M_d$. In view of the inclusions
	\[ \frk{v}_\kappa(\grL)\subset\fb\subset \fv_0(\Lambda) \] and of Theorem \ref{T:ISDim0}, we see that $\ZBd(\frk{b})=\grL$. It only remains to show that $ \PI(\frk{b},z)=\frk{a}_z  $
	for each $z\in\grL$. 
	
	For this purpose, fix $z\in \Lambda$. We claim that 
	\[
	\ip{[\psi]_z:\psi\in\frk{b}_0}=\ip{[p]_z:p\in\frk{a}_z}.
	\]
	Indeed, for every $w\in \Lambda, w\neq z$ and every $\alpha\in \bN^d$ with $|\alpha|\leq \kappa$, we note that 
	\[
	\frac{\pd^\alpha \theta_w}{\pd x^\alpha}(z)=0
	\]
	so that 
	\[
	[\theta_w]_z \in \ip{ [q]_z:q\in \fm_z^{\kappa+1}}.
	\]
	On the other hand, we know by assumption that $ \fm_z^{\kappa+1}\subset\fa_z$ so
	\[
	[\theta_w]_z \in \ip{ [p]_z:p\in \fa_z}
	\]
	for every $w\in \Lambda, w\neq z$, and therefore
	\begin{equation}\label{Eq:inclusion}
	\langle[\psi]_z:\psi\in\fb_0\rangle\subset \langle [\psi]_z:\psi\in \fa_z+\theta_z\fa_z\rangle.
	\end{equation}
	Since $\theta_z(z)=1$, we conclude that $[\theta_z]_z$ is invertible in $\O(z)$. Consequently, we find
	\begin{align*}
	\langle [p]_z:p\in \fa_z\rangle&=\langle [\theta_z p]_z:p\in \fa_z\rangle\\
	&\subset \langle[\psi]_z:\psi\in\fb_0\rangle\\
	&\subset \langle [p]_z:p\in \fa_z\rangle
	\end{align*}
	where the last inclusion follows from Equation (\ref{Eq:inclusion}). The claim is established.
	Next, we always have
	\[
	\ip{[\psi]_z:\psi\in\frk{v}_k(\grL)}\subset \ip{[q]_z:q\in\frk{m}_z^{k+1}}
	\]
	so in particular
	\[
	\ip{[\psi]_z:\psi\in\frk{v}_k(\grL)}\subset \ip{[p]_z:p\in\fa_z}
	\]
	and thus
	\[ \ip{[\psi]_z:\psi\in\frk{b}_0+\frk{v}_k(\grL)}=\ip{[\psi]_z:\psi\in\frk{b}_0 }=\ip{[p]_z:p\in\frk{a}_z}. \]
	By Lemma \ref{L:PIClosures}, it follows that
	\[ \ip{[\psi]_z:\psi\in\frk{b}}=\ip{[p]_z:p\in\frk{a}_z} \]
	and so $\frk{p}(\frk{b},z)=\frk{p}(\frk{a}_z,z)$. Finally, we may invoke Lemma \ref{L:PIofPI} to get
	$
	\fp(\fb,z)=\fa_z.
	$
\end{proof}

This theorem says in particular that the higher order vanishing behaviour of multipliers in several variables is more complicated that what is visible on the unit disc.  We illustrate this in the following example, leveraging the fact that we can prescribe the polynomial ideals determined by such an ideal at every point.

\begin{example}\label{E:vanishingideal}
Let $\Lambda=\{z_n:n\in \bN\}\subset \bB_2$ be an interpolating sequence. For each $n\in \bN$, write
$
z_n=(z_{n,1},z_{n,2})
$
 and consider the ideal
\[
	\fb_n=\fm_{z_n}^2+\ip{x_1-z_{n,1}}.
\]
It is readily verified that $\fb_n$ is not a power of the maximal ideal $\fm_{z_n}$. By Theorem \ref{T:CHVI}, there are weak-$*$ closed ideals $\fa,\fb\subset \M_2$ both containing $\fv_2(\Lambda)$ and satisfying $\Z_{\bB_d}(\fa)=\Z_{\bB_d}(\fb)=\Lambda$ such that
\[
\fp(\fa,z_n)=\fm_{z_n}^2, \quad \fp(\fb,z_n)=\fb_n
\]
for every $n\geq 1$. This type of phenomenon does not occur in one variable; the reader may wish to compare the preceding equalities with Equation (\ref{Eq:d=1vanishing}) in Example \ref{E:d=1vanishing}.
\qed
\end{example}

\section{Jordan-type decompositions}\label{S:AJD}

In this section, we investigate AC commuting row contractions annihilated by some ideal of multipliers. Our main goal is to show that if the annihilating ideal is a higher order vanishing ideal for some interpolating sequence, then the corresponding commuting row contraction is similar to a block diagonal tuple, where each block is a nilpotent tuple translated by a scalar multiple of the identity. We view this as an infinite-dimensional, multivariate version of the classical Jordan decomposition of a matrix. Several preliminary lemmas are required before we can prove the existence of such a decomposition. We start with a technical fact.

\begin{lemma}
Let $T=(T_1,\dots,T_d)$ be an AC commuting row contraction on some Hilbert space $\frk{H}$. Let $\S\subset \M_d$ be a subset with the property that $\S+\Ann(T)$ is weak-$*$ dense in $\mc{M}_d$.  Then, we have
	\[
	\frk{H}=\bigvee_{\phi\in \S} \ran \phi(T).
	\]
	\label{L:TotalRange}
\end{lemma}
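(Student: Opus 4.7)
The plan is to argue by duality: let $\frk{M}=\bigvee_{\phi\in \S}\ran \phi(T)$ and let $P\in B(\fH)$ denote the orthogonal projection onto $\frk{M}^\perp$. It suffices to show that $P=0$.

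First, I would observe that for every $\phi\in \S$ we have $\ran \phi(T)\subset \frk{M}$, so that $P\phi(T)=0$. Likewise, for every $\psi\in \Ann(T)$ we have $\psi(T)=\widehat{\alpha_T}(\psi)=0$, so that $P\psi(T)=0$. Consequently, the set
\[
\frk{I}=\{\phi\in \M_d : P\phi(T)=0\}
\]
contains $\S+\Ann(T)$.

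Next, I would verify that $\frk{I}$ is weak-$*$ closed in $\M_d$. Because $T$ is AC, the map $\widehat{\alpha_T}:\M_d\to B(\fH)$ is weak-$*$ continuous. Moreover, left multiplication by the bounded operator $P$ is weak-$*$ (ultraweakly) continuous on $B(\fH)$, since for every trace class operator $K\in B(\fH)$ we have $\operatorname{tr}(PAK)=\operatorname{tr}(A(KP))$ and $KP$ is again trace class. Composing these two continuous maps yields that $\phi\mapsto P\phi(T)$ is weak-$*$ continuous, so that $\frk{I}$ is weak-$*$ closed as the preimage of $\{0\}$.

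Combining the previous two steps, $\frk{I}$ is a weak-$*$ closed subset of $\M_d$ containing $\S+\Ann(T)$, which by hypothesis is weak-$*$ dense in $\M_d$. Hence $\frk{I}=\M_d$. Specializing to the constant multiplier $\phi=1$ and using that $\widehat{\alpha_T}$ is unital, we obtain $P=P\cdot 1(T)=0$. Thus $\frk{M}^\perp=\{0\}$ and $\frk{M}=\frk{H}$, as required. There is no real obstacle here beyond identifying the correct weak-$*$ continuity input; the only point to be careful about is invoking the absolute continuity hypothesis on $T$ to promote the original algebra homomorphism $\alpha_T$ on $\A_d$ to a weak-$*$ continuous homomorphism on $\M_d$, which is precisely what makes the density of $\S+\Ann(T)$ usable.
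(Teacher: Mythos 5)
Your proof is correct and takes essentially the same approach as the paper's: both hinge on the weak-$*$ continuity of $\widehat{\alpha_T}$ (coming from absolute continuity) and on the weak-$*$ density of $\S+\Ann(T)$ to reach the constant $1$. The paper works with an explicit net $(\phi_j+\psi_j)\to 1$ and tests against a vector $\xi\in\frk{M}^\perp$ to get $\|\xi\|^2=0$, whereas you package the same computation by observing that $\frk{I}=\{\phi:P\phi(T)=0\}$ is a weak-$*$ closed subspace containing the dense set, hence all of $\M_d$; the two arguments are interchangeable.
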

\begin{proof}
Throughout the proof we put
\[
\fR=\bigvee_{\phi\in \S} \ran \phi(T).
\]
	We note that
	\[ \fR^\perp = \bigcap_{\phi\in \S} \ker \phi(T)^*. \]
	By assumption, the constant multiplier $1$ is in the weak-$*$ closure of $\S+\Ann(T)$.
	Thus there is a net $(\psi_j)_{j\in J}$ in $\Ann(T)$ and a net $(\phi_j)_{j\in J}$ in $\S$ such that $(\phi_j+\psi_j)_{j\in J}$ converges in the weak-$*$ topology of $\M_d$ to $1$.
	On the other hand, since $\psi_j(T)=0$ for every $j\in J$ and since $T$ is AC, we have that the net $(\phi_j(T))_{j\in J}$ converges to $I$ in the weak-$*$ topology of $B(\fH)$. For $\xi \in  \bigcap_{\phi\in \S} \ker \phi(T)^*$, we find
	\[ \ip{\xi,\xi}=\lim_j \ip{\xi,\phi_j(T)\xi} = \lim_j \ip{\phi_j(T)^*\xi,\xi} = 0 \]
	which implies $\fR^\perp=\{0\}$ as desired.
\end{proof}

We now clarify the relationship between the Taylor spectrum and the zero sets of annihilating ideals.

\begin{theorem}
	Let $T=(T_1,\dots,T_d)$ be an AC commuting row contraction. Then, the following statements hold.
	\begin{enumerate}
	
	\item[\rm{(i)}] We have that
	\[ \sigma(T)\cap\BB_d \subset \ZBd(\Ann (T)). \]
	
	\item[\rm{(ii)}] 	Let $\grL\subset \bB_d$ be an interpolating sequence and let $\kappa$ be a non-negative integer. Assume that $\fv_\kappa(\Lambda)\subset \Ann(T)$.
	Then, we have that
	\[
	\fv_\kappa(\sigma(T)\cap \bB_d)\subset \Ann(T)
	\]
	and 
	\[ \sigma(T)\cap\BB_d=\ZBd(\Ann (T)). \]
	\end{enumerate}
	\label{T:spectrum}
\end{theorem}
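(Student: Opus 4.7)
For (i), the plan is to argue by contradiction via Gleason's trick. Given $z \in \sigma(T) \cap \bB_d$ and a hypothetical $\psi \in \Ann(T)$ with $\psi(z) \neq 0$, I would invoke Theorem \ref{T:GleasonsTrick} with $N=1$ to write $\psi = \psi(z) + \sum_{j=1}^d (x_j - z_j)\eta_j$ for some $\eta_j \in \M_d$; applying the functional calculus rearranges this to $-\psi(z) I = \sum_j (T_j - z_j I)\eta_j(T)$, with the coefficients $\eta_j(T)$ lying in $\{T\}''$. A standard Koszul contracting-homotopy argument, leveraging that the $\eta_j(T)$ commute with each $T_k - z_k I$, then forces $z \notin \sigma(T)$, the sought contradiction.

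For (ii), let $\Lambda' := \sigma(T)\cap\bB_d$. By part (i) combined with Theorem \ref{T:ISDim0}, $\Lambda' \subset \ZBd(\Ann(T)) \subset \ZBd(\fv_\kappa(\Lambda)) = \Lambda$. The plan is to combine Lemma \ref{L:GetIdems}(iii) with Lemma \ref{L:TotalRange}, applied to $\S = \sum_{w\in\Lambda}\theta_{\{w\}}\M_d$, to obtain $\fH = \bigvee_{w \in \Lambda} \ran \theta_{\{w\}}(T)$, and then to prove the following key claim: $\theta_{\{w\}}(T) = 0$ whenever $w \in \Lambda \setminus \sigma(T)$. Granting the key claim, $\fH = \bigvee_{w \in \Lambda'} \ran \theta_{\{w\}}(T)$. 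Then, for any $\psi \in \fv_\kappa(\Lambda')$ and any $w \in \Lambda'$, the product $\psi\theta_{\{w\}}$ vanishes to order $\kappa$ at every point of $\Lambda$ (at $w$ via $\psi \in \fv_\kappa(\Lambda')$, and at any $w' \in \Lambda\setminus\{w\}$ via $\theta_{\{w\}} \in \fv_\kappa(\Lambda\setminus\{w\})$), so $\psi\theta_{\{w\}} \in \fv_\kappa(\Lambda)\subset\Ann(T)$ and thus $\psi(T)\theta_{\{w\}}(T)=0$, forcing $\psi(T)=0$. The identity $\sigma(T)\cap\bB_d = \ZBd(\Ann(T))$ then follows: only the inclusion $\supset$ needs argument, and if $z \in \ZBd(\Ann(T))$ were not in $\sigma(T)$, the key claim would give $\theta_{\{z\}}\in\Ann(T)$, contradicting $\theta_{\{z\}}(z) = 1$.

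The key claim is where I expect the main effort. Writing $\theta := \theta_{\{w\}}$, I would first verify that $\theta(T)$ is an idempotent via a Leibniz calculation: $\theta - \theta^2$ vanishes to order $\kappa$ at every point of $\Lambda$---at $w$ because $\theta(w) = 1$ and $\partial^\alpha\theta(w) = 0$ for $1 \leq |\alpha| \leq \kappa$ (from $1-\theta \in \fv_\kappa(\{w\})$), and at every other $w' \in \Lambda$ because $\theta \in \fv_\kappa(\Lambda\setminus\{w\})$---so $\theta-\theta^2 \in \fv_\kappa(\Lambda)\subset\Ann(T)$. An analogous Leibniz argument shows that for every polynomial $p \in \fm_w^{\kappa+1}$, the product $p\theta^{\kappa+1}$ also lies in $\fv_\kappa(\Lambda)$; invoking idempotence (so that $\theta(T)^{\kappa+1} = \theta(T)$) then yields $p(T)\theta(T) = 0$, i.e., $p(T)$ annihilates $\ran\theta(T)$. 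Specializing to $p = (x_j - w_j)^{\kappa+1}$ makes each $T_j - w_j I$ nilpotent on $\ran\theta(T)$, so $\sigma(T|_{\ran\theta(T)}) \subset \{w\}$. If $\ran\theta(T)$ were nonzero, its Taylor spectrum would be nonempty and hence equal to $\{w\}$, and the direct-sum identity $\sigma(T) = \sigma(T|_{\ran\theta(T)}) \cup \sigma(T|_{\ker\theta(T)})$ associated with the idempotent $\theta(T)$ would force $w \in \sigma(T)$, contradicting $w \in \Lambda\setminus\sigma(T)$. Hence $\theta_{\{w\}}(T) = 0$, completing the argument.
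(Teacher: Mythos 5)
Your proposal is correct and follows essentially the same strategy as the paper. Part (i) is the same Gleason-trick argument (the paper cites M\"uller's Proposition IV.25.3 for the conclusion you reach via a Koszul contracting homotopy); part (ii) matches the paper's proof step by step: Lemma \ref{L:GetIdems} plus Lemma \ref{L:TotalRange} to get $\fH=\bigvee_{w\in\Lambda}\ran\theta_{\{w\}}(T)$, the same ``key claim'' that $\theta_{\{w\}}(T)=0$ for $w\in\Lambda\setminus\sigma(T)$ (the paper runs it via the spectral mapping theorem and Lemma III.13.4 in Vasilescu, you via nonemptiness of the Taylor spectrum plus the direct-sum spectral identity --- equivalent routes), and the same deduction for $\fv_\kappa(\sigma(T)\cap\bB_d)\subset\Ann(T)$. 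The only cosmetic deviations are your use of $\theta^{\kappa+1}$ and idempotence where the paper multiplies $\theta_{\{z\}}$ directly against $(x_j-z_j)^{\kappa+1}$, and your endgame for $\ZBd(\Ann(T))\subset\sigma(T)\cap\bB_d$, which re-uses the key claim in place of the paper's appeal to Corollary \ref{C:ISplus1} --- a slight streamlining that is legitimate since $\ZBd(\Ann(T))\subset\Lambda$ has already been established.
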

\begin{proof}
	(i) Let $z\in \BB_d\setminus \ZBd(\Ann (T))$ and choose $\phi\in \Ann(T)$ such that $\phi(z)=1$.
	By Theorem \ref{T:GleasonsTrick} there are $\psi_1,\dots,\psi_d\in\mc{M}_d$ such that 
	\[
	\phi=1+\sum_{j=1}^d(x_j-z_j)\psi_j.
	\]
	Applying the functional calculus to the previous equality yields
	\[ 0=\phi(T)=I+\sum_{j=1}^d(T_j-z_jI)\psi_j(T) \]
	which forces $z\notin \sigma(T)$ by \cite[Proposition IV.25.3]{muller2007}. 
	
	(ii) By Lemma \ref{L:GetIdems}, for each subset $\Omega\subset \Lambda$ there is a multiplier $\theta_\Omega\in \fv_\kappa(\Lambda\setminus \Omega)$ such that $1-\theta_\Omega\in \fv_\kappa(\Omega)$ 	and
	\[
	\fv_\kappa(\Lambda)+\sum_{z\in\grL} \theta_{\{z\}}\mc{M}_d
	\]
	is weak-$*$ dense in $\mc{M}_d$. It thus follows from Lemma \ref{L:TotalRange} that 
	\begin{equation}\label{Eq:range}
	\frk{H}=\bigvee\{\ran \phi(T): \phi\in \theta_{\{z\}} \M_d \text{ for some }z\in \Lambda\}=\bigvee_{z\in\grL} \ran\grj_{\{z\}}(T).
	\end{equation}
	Assume $z\in\grL$ is such that $ \theta_{\{z\}}(T)\neq 0$ and set  $R=T|_{\ran\theta_{\{z\}}(T)}$. Note that 
	\[
	\theta_{\{z\}} (1-\theta_{\{z\}})\in\frk{v}_k(\grL)\subset \Ann(T)
	\]
	and thus $\theta_{\{z\}}(T)$ is an idempotent. Likewise,
	\[
	\theta_{\{z\}}(x_j-z_j)^{\kappa+1}\in\frk{v}_k(\grL)\subset \Ann(T)
	\]
	for $1\leq j\leq d$.  It follows that 
	\[
	(T_j-z_j I)^{\kappa+1} \theta_{\{z\}}(T)=0, \quad 1\leq j\leq d
	\]
	so that
	\[
	((R_1-z_1I)^{\kappa+1},(R_2-z_2I)^{\kappa+1},\ldots, (R_d-z_dI)^{\kappa+1})
	\]
	is simply the zero $d$-tuple, and hence its Taylor spectrum is the origin in $\bC^d$. By the spectral mapping theorem \cite[Corollary IV.30.11]{muller2007}, we must have $\sigma(R)=\{z\}$. However, it follows from \cite[Lemma III.13.4]{VasilescuBook}  that $\sigma(R)\subset\sigma(T)$ so $z\in \sigma(T)$. This shows that $\theta_{\{z\}}\in \Ann (T)$ for every $z\in\grL\setminus\sigma(T)$. In particular, we note that 
	\[
	\theta_{\{z\}}\fv_\kappa(\sigma(T)\cap \Lambda)\subset \Ann (T)
	\]
	for all $z\in\grL$. For $\phi\in \fv_\kappa(\sigma(T)\cap \Lambda)$, we then have 
	\[
	\ran \theta_{\{z\}}(T)\subset \ker \phi(T), \quad z\in \Lambda
	\]
	and therefore $\phi(T)=0$ in light of Equation (\ref{Eq:range}). We conclude that 
	\[
	\fv_\kappa(\sigma(T)\cap \Lambda)\subset \Ann(T).
	\] 
	Since $\fv_\kappa(\Lambda)\subset \Ann(T)$, it follows from (i) and Theorem \ref{T:ISDim0} that
	\[
	\sigma(T)\cap \bB_d\subset \ZBd(\Ann (T))\subset \Z_{\bB_d}(\fv_\kappa(\Lambda))=\Lambda
	\]
	whence
	\[
	\sigma(T)\cap \bB_d=\sigma(T)\cap \Lambda.
	\]
	Therefore
	\[
	\fv_\kappa(\sigma(T)\cap \bB_d)\subset \Ann(T).
	\]
	Finally, let  $z\in \bB_d \setminus \sigma(T)$. As noted above, we have $\sigma(T)\cap \bB_d\subset \Lambda$ so that $\sigma(T)\cap \bB_d$ is an interpolating sequence, and thus so is $(\sigma(T)\cap \bB_d)\cup\{z\}$ by virtue of Corollary \ref{C:ISplus1}. Invoking Lemma \ref{L:GetIdems}, we find $\theta\in\frk{v}_k(\sigma(T)\cap\BB_d)$ such that $\theta(z)=1$. In particular, $\theta \in \Ann(T)$ so $z\in \bB_d\setminus \Z_{\bB_d}(\Ann(T))$. We conclude that
	\[
	 \Z_{\bB_d}(\Ann(T))\subset \sigma(T)\cap \bB_d
	\]
	so in fact equality holds.	
\end{proof}

A more thorough exploration of the relationship between the Taylor spectrum and annihilating ideals will be undertaken in an upcoming paper. For now, we turn to elucidating the structure of AC commuting row contractions whose Taylor spectrum is a singleton. As motivation, we first consider the univariate situation. Let $T\in B(\fH)$ be an AC contraction with non-trivial annihilating ideal and with $\sigma(T)=\{\lambda\}$ for some $\lambda\in \bB_1$. It then follows from \cite[Theorem 4.11]{bercovici1988} that  
$\Ann(T)$ is generated by some power of the Blaschke factor with root $\lambda$, and in particular $T-\lambda I$ is a nilpotent operator. As the next result shows, similar statements hold true for AC commuting row contractions under a topological assumption on the zero set of the annihilating ideal. 

We say that a commuting $d$-tuple $T=(T_1,\ldots,T_d)$ is \emph{nilpotent} if for each $1\leq j\leq d$ there is a positive integer $n_j$ such that $T_j^{n_j}=0$.

\begin{theorem}
	Let $T=(T_1,\dots,T_d)$ be an AC commuting row contraction and let $z\in \bB_d$ be an isolated point of $\Z_{\bB_d}(\Ann (T))$. Assume that $\sigma(T)=\{z\}$.	Then, $\PI(\Ann (T),z)$ is a weak-$*$ dense subset of $\Ann(T)$ and $T-zI$ is nilpotent.
	\label{T:GetNilp}
\end{theorem}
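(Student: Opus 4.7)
The strategy is to use the constraint $\sigma(T)=\{z\}$ to descend the Taylor functional calculus to the germ ring $\mc{O}(z)$, and then to identify it with the AC multiplier calculus via Theorem~\ref{T:EqualFC}. To begin, I would invoke Lemma~\ref{L:GetPIs} with $F=\Ann(T)$ and $U=\BB_d$ (using the hypothesis that $z$ is isolated in $\ZBd(\Ann(T))$) to extract an integer $\mu\geq 1$ with $\fm_z^\mu\subset \PI(\Ann(T),z)$. Because $\sigma(T)=\{z\}$, for any open neighbourhood $U$ of $z$ and any $f\in\mc{O}(U)$, the operator $\tau_{T,U}(f)$ depends only on the germ $[f]_z$, by Theorem~\ref{T:Taylorprop}(i); hence the various Taylor calculi assemble into a single ring homomorphism $\widetilde{\tau}:\mc{O}(z)\to B(\fH)$ sending $[f]_z$ to $\tau_{T,\BB_d}(f)$ whenever $f\in\M_d$. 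Combined with Theorem~\ref{T:EqualFC}(ii), this gives $\widetilde{\tau}([f]_z)=\widehat{\alpha_T}(f)=0$ for every $f\in\Ann(T)$, so $\ker\widetilde{\tau}$ contains the ideal $\ip{[f]_z:f\in\Ann(T)}$. By definition of $\PI(\Ann(T),z)$, this forces $p(T)=\widetilde{\tau}([p]_z)=0$ for every polynomial $p\in\PI(\Ann(T),z)$, so $\PI(\Ann(T),z)\subset\Ann(T)$. Applying this to the generators of $\fm_z^\mu$ yields $(T_j-z_jI)^\mu=0$ for each $j$, proving nilpotency of $T-zI$.

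For the weak-$*$ density claim, I would fix $\phi\in\Ann(T)$ and apply Theorem~\ref{T:GleasonsTrick} with $N=\mu$ to write
\[
\phi=p_\phi+\sum_{|\alpha|=\mu}(x-z)^\alpha\psi_\alpha,\qquad p_\phi=\sum_{|\alpha|<\mu}\frac{1}{\alpha!}\frac{\pd^\alpha\phi}{\pd x^\alpha}(z)(x-z)^\alpha,
\]
with $\psi_\alpha\in\M_d$. Applying $\widehat{\alpha_T}$ and using that $(T-zI)^\alpha=0$ whenever $|\alpha|=\mu$ (by the nilpotency just established, since $\fm_z^\mu\subset\Ann(T)$), the tail vanishes and $p_\phi(T)=0$; hence the polynomial $p_\phi$ belongs to $\Ann(T)\cap\CC[x_1,\dots,x_d]\subset\PI(\Ann(T),z)$. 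Since each $(x-z)^\alpha$ with $|\alpha|=\mu$ also lies in $\PI(\Ann(T),z)$, this gives $\phi\in\PI(\Ann(T),z)\cdot\M_d$, and so $\Ann(T)=\PI(\Ann(T),z)\cdot\M_d$. To promote this into weak-$*$ density of $\PI(\Ann(T),z)$ itself, I would use that polynomials are weak-$*$ dense in $\M_d$ and that, for any fixed $p\in\M_d\subset B(H^2_d)$, the map $\psi\mapsto p\psi$ is weak-$*$ continuous (being the adjoint of right multiplication by $p$ on the trace class). Thus every $p\psi$ is the weak-$*$ limit of a net $pq_j\in\PI(\Ann(T),z)\cdot\CC[x_1,\dots,x_d]\subset \PI(\Ann(T),z)$, completing the density argument.

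The most delicate point will be articulating cleanly that the Taylor calculus genuinely descends to $\mc{O}(z)$ and meshes with the AC multiplier calculus through Theorem~\ref{T:EqualFC}(ii); once this germ-level functional calculus is in hand, the rest reduces to bookkeeping with Gleason's trick, Lemma~\ref{L:GetPIs}, and the dual algebra structure of $\M_d$.
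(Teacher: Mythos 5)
Your proposal is correct and follows the same broad strategy as the paper (Taylor functional calculus compatible with the AC calculus via Theorem~\ref{T:EqualFC}, Lemma~\ref{L:GetPIs} for the power $\fm_z^\mu$, Gleason's trick for density). There is, however, a small organizational difference worth noting. For the inclusion $\PI(\Ann(T),z)\subset\Ann(T)$, the paper invokes Noetherianity of $\O(z)$ to extract finite sets of generators $\psi_1,\dots,\psi_m\in\Ann(T)$, writes an explicit identity $p=\sum_j\psi_j g_j$ on a small ball, and applies the functional calculus termwise; you instead assemble all the Taylor calculi into one germ-level homomorphism $\widetilde\tau:\O(z)\to B(\fH)$ and observe that its kernel is an ideal containing $\ip{[f]_z:f\in\Ann(T)}$ --- a cleaner conceptual packaging of the same computation, with no need to pass through Noetherianity. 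For the density claim, the paper first decomposes the germ $[\psi]_z=[p]_z+[g]_z$ with $p\in\fp(\Ann(T),z)$ chosen via the germ identity and $[g]_z\in\widetilde{\fm_z}^\kappa$, and only then applies Gleason's trick to the multiplier $\psi-p$; you apply Gleason's trick directly to $\phi$ at truncation order $\mu$, then deduce $p_\phi\in\Ann(T)$ (hence $p_\phi\in\PI(\Ann(T),z)$) by hitting the Gleason decomposition with $\widehat{\alpha_T}$ and using $\fm_z^\mu\subset\Ann(T)$. Your route shortcuts the germ decomposition entirely once the first half of the theorem is in hand, which is a genuine simplification; the paper's route is more symmetric with the rest of the germ machinery it has built. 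Both are valid, and the step you flag --- that the $\M_d$-ideal generated by $\PI(\Ann(T),z)$ lies in its weak-$*$ closure via weak-$*$ density of polynomials and weak-$*$ continuity of left multiplication --- is exactly what the paper is implicitly using when it asserts that $\psi-p$ lies in the weak-$*$ closure of $\fm_z^\kappa$.
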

\begin{proof}
		Using the fact that $\mc{O}(z)$ is Noetherian, we can find $\psi_1,\dots,\psi_m\in\Ann(T)$ such that 
		\[
		\ip{[\psi]_z:\psi\in\Ann(T)}=\ip{[\psi_1]_z,\cdots,[\psi_m]_z}.
		\]
		On the other hand, it follows from Lemma \ref{L:GetPIs} that
		\begin{equation}\label{Eq:eqgerms}
		\ip{[\psi]_z:\psi\in\Ann(T)}=\ip{[p]_z:p\in \fp(\Ann(T),z)}.
		\end{equation}
		Let $p\in \fp(\Ann(T),z)$. There are functions $g_1,\cdots,g_m$ analytic on a neighborhood of $z$ such that
		\[ [p]_z=\sum_{j=1}^m [\psi_j]_z[g_j]_z.  \]
		In particular, there is a small open ball $B$ centred at $z$ on which the functions $g_1,\cdots,g_m$ are defined and holomorphic, and are such that
		$p=\sum_{j=1}^m\psi_j g_j$ everywhere on $B$. Applying the functional calculus to this equality and invoking Theorem \ref{T:EqualFC}, we find
		\[ p(T)=\sum_{j=1}^m \psi_j(T)g_j(T) = 0 \]
		since $\psi_1,\ldots,\psi_m\in \Ann(T)$. Thus, $p\in\Ann(T)$. We conclude that 
		\[
		\frk{p}(\Ann(T),z)\subset \Ann(T).
		\]
		Using Lemma \ref{L:GetPIs}, we find a positive integer $\kappa$ such that 
		\begin{equation}\label{Eq:inclann}
		\frk{m}_z^\kappa\subset  \fp(\Ann(T),z)\subset \Ann(T).
		\end{equation}
		In particular, $(x_j-z_j)^\kappa\in \Ann(T)$ for every $1\leq j\leq d$, whence the $d$-tuple $T-zI$ is nilpotent.

		It remains only to show that $\PI(\Ann (T),z)$ is weak-$*$ dense in $\Ann(T)$. Fix $\psi\in\Ann(T)$. Using Equation (\ref{Eq:eqgerms}), there are polynomials $q_1,\ldots,q_m\in \fp(\Ann(T),z)$ and functions $f_1,\ldots,f_m$ holomorphic on a neighbourhood of $z$ such that
\[
[\psi]_z=\sum_{j=1}^m [f_j]_z [q_j]_z.
\]
	For each $1\leq j\leq m,$ upon writing $f_j$ as a power series convergent around $z$, we see that there is another function $g_j$ holomorphic near $z$ such that 
	$
	[g_j]_z\in \widetilde{\fm_z}^{\kappa}
	$ 
	and a polynomial $r_j$ such that 
	\[
	[f_j]_z=[r_j]_z+[g_j]_z.
	\]
Set $p=\sum_{j=1}^m r_j q_j\in \fp(\Ann(T),z)$. Thus, there is $[g]_z\in \widetilde{\fm_z}^{\kappa}$ such that
\[
[\psi]_z=[p]_z+[g]_z.
\]
In particular, we infer that
	\[
	 \frac{\pd^\alpha }{\pd x^\alpha} (\psi-p)(z)=0
	\]
		for every $\alpha\in \bN^d$ such that $|\alpha|\leq \kappa-1$. By virtue of Theorem \ref{T:GleasonsTrick}, for each $\alpha\in \bN^d$ with $|\alpha|=\kappa $ there is a  multiplier $\phi_\alpha\in \M_d$ such that 
		\[
		\psi-p=\sum_{|\alpha|=\kappa} (x-z)^\alpha \phi_\alpha.
		\]
		In particular, this means that $\psi-p$ belongs to the weak-$*$ closure of $\fm_z^{\kappa}$ in $\M_d$. Invoking (\ref{Eq:inclann}), we see that 
		\[
		\psi=(\psi-p)+p
		\]
		belongs to the weak-$*$ closure of $\fp(\Ann(T),z)$. We conclude that  $\PI(\Ann (T),z)$ is weak-$*$ dense in $\Ann(T)$.
		\end{proof}

		We remark here that the zero set of the annihilating ideal of a single AC contraction is a Blaschke sequence, all the points of which are isolated.
		Thus, the topological assumption on the zero set in the previous result is automatically satisfied in one variable.
		
We will need to apply Theorem \ref{T:GetNilp} when $\sigma(T)$ is discrete but contains more than a single point. For this purpose,  we introduce the following procedure which allows us to isolate points in the spectrum.

\begin{lemma}
	Let $T=(T_1,\dots,T_d)$ be an AC commuting row contraction and suppose $z\in\sigma(T)\cap\BB_d$ is an isolated point of $\ZBd(\Ann (T))$.
	Let $B$ be an open ball around $z$ with $\cc{B}\subset \BB_d$ such that 
	\[
	\cc{B}\cap\ZBd(\Ann T)=\{z\}.
	\]
	If $\chi_B$ denotes the characteristic function of $B$, then $\ran \chi_B(T)$ is a non-zero subspace which coincides with
	\[
	\bigcap\{ \ker p(T):p\in \fp(\Ann(T),z)\}.
	\]
	\label{L:RanIdemEqKerPI}
\end{lemma}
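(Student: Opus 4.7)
The plan is to exploit the Taylor functional calculus to convert $\chi_B(T)$ into a spectral idempotent separating $z$ from the rest of $\sigma(T)$. Combining the hypothesis $\cc{B}\cap \ZBd(\Ann T)=\{z\}$ with Theorem~\ref{T:spectrum}(i), one finds $\sigma(T)\cap \cc{B}=\{z\}$, so $K_2:=\sigma(T)\setminus B=\sigma(T)\setminus \cc{B}$ is a compact set disjoint from $\cc{B}$. Picking an open neighbourhood $U_2$ of $K_2$ disjoint from $\cc{B}$, the function $\chi_B$ is locally constant and hence holomorphic on $B\cup U_2\supset\sigma(T)$, and Theorem~\ref{T:Taylorprop}(iii) (with the degenerate case $P=I$ when $K_2=\emptyset$) produces a non-zero idempotent $P=\chi_B(T)$ commuting with $T$, with $\sigma(T|_{\ran P})=\{z\}$ and $\sigma(T|_{\ker P})=K_2$. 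Writing $R=T|_{\ran P}$ and $S=T|_{\ker P}$ gives an algebraic direct sum $T=R\oplus S$, and $\ran P\neq\{0\}$ is automatic.

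For the inclusion $\ran P\subset \bigcap\{\ker p(T):p\in\fp(\Ann T,z)\}$, I would fix $p\in\fp(\Ann T,z)$, expand $[p]_z=\sum_j[\phi_j]_z[g_j]_z$ with $\phi_j\in\Ann(T)$ and $g_j$ holomorphic on a small ball $V\subset\bB_d$ around $z$, and obtain $p=\sum_j\phi_j g_j$ on $V$. Since $\sigma(R)=\{z\}\subset V$, the Taylor calculus for $R$ on $V$ gives $p(R)=\tau_{R,V}(p|_V)=\sum_j\tau_{R,V}(\phi_j|_V)\,\tau_{R,V}(g_j)$, so it suffices to establish the identity $\tau_{R,V}(\phi|_V)=0$ for every $\phi\in\Ann(T)$; this then yields $p(T)P=p(R)\oplus 0=0$ and hence $\ran P\subset\ker p(T)$. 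To prove this identity I would approximate $\phi$ by the dilates $\phi^{(n)}(x)=\phi((1-1/n)x)\in \A_d\cap \O(U_n)$, with $U_n$ the ball of radius $(1-1/n)^{-1}$, exactly as in the proof of Theorem~\ref{T:EqualFC}(ii), so that $\phi^{(n)}(T)\to \widehat{\alpha_T}(\phi)=0$ in weak-$*$. Viewing $P$ as a surjection $Q:\fH\to\ran P$, the intertwining $QT_j=R_jQ$ together with Theorem~\ref{T:Taylorprop}(ii) gives $Q\phi^{(n)}(T)=\tau_{R,U_n}(\phi^{(n)})Q$; the left side converges to $0$ in weak-$*$, while Theorem~\ref{T:Taylorprop}(i) identifies $\tau_{R,U_n}(\phi^{(n)})$ with $\tau_{R,V}(\phi^{(n)}|_V)$, and uniform continuity of $\phi$ on a slightly larger ball produces uniform convergence $\phi^{(n)}|_V\to \phi|_V$, whence continuity of the Taylor calculus gives norm convergence $\tau_{R,V}(\phi^{(n)}|_V)\to\tau_{R,V}(\phi|_V)$. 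Matching the two limits and using surjectivity of $Q$ forces $\tau_{R,V}(\phi|_V)=0$.

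For the reverse inclusion, Lemma~\ref{L:GetPIs} supplies an integer $\mu$ with $\fm_z^\mu\subset \fp(\Ann T,z)$, so any $\xi$ in the intersection satisfies $(T-zI)^\alpha \xi=0$ for every multi-index $\alpha$ of length $\mu$. Then $(I-P)\xi\in\ker P$ and $(S-zI)^\alpha(I-P)\xi=(I-P)(T-zI)^\alpha\xi=0$ for all such $\alpha$. Since $z\notin K_2=\sigma(S)$, \cite[Proposition IV.25.3]{muller2007} supplies commuting operators $D_1,\dots,D_d$ on $\ker P$ satisfying $\sum_{j=1}^d D_j(S_j-z_j I)=I_{\ker P}$; raising this identity to the $\mu$th power and using the multinomial theorem together with the commutativity of the $D_j$ and $S_k$ expresses $I_{\ker P}$ as a linear combination of the operators $(S-zI)^\alpha$ with $|\alpha|=\mu$, each of which annihilates $(I-P)\xi$. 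Hence $(I-P)\xi=0$ and $\xi=P\xi\in\ran P$.

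I expect the hardest step to be the identity $\tau_{R,V}(\phi|_V)=0$ for $\phi\in\Ann(T)$. Since $P$ is idempotent but generally not an orthogonal projection, $R$ need not be a row contraction and certainly need not be AC, so Theorem~\ref{T:EqualFC}(ii) is not directly available to transfer the vanishing of $\phi(T)$ to $R$. The dilation trick bypasses this obstruction by replacing $\phi$ with elements of $\A_d$, where the AC and Taylor functional calculi coincide, and then passing to the limit simultaneously in weak-$*$ on the $T$-side and in norm on the $R$-side, the latter being available precisely because $\sigma(R)$ is the single point $z$ compactly contained in $V$.
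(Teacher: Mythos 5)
Your proposal is correct in outline and reaches the right conclusion, but both halves of the argument are more elaborate than the paper's, and one of the detours is motivated by a worry that is in fact unfounded. For the forward inclusion, you avoid using the AC functional calculus on $R=T|_{\ran P}$ because you fear $R$ is not a row contraction (``$P$ is idempotent but generally not an orthogonal projection''). But $R$ is a \emph{restriction}, not a compression: since $P=\chi_B(T)$ commutes with $T$, the closed subspace $\ran P$ is $T$-invariant, and the restriction of a row contraction to a closed invariant subspace is again a row contraction (the row operator $\fH^{(d)}\to\fH$ is a contraction, hence so is its restriction to $(\ran P)^{(d)}\to\ran P$); moreover $\phi(T|_{\ran P})=\phi(T)|_{\ran P}$ for $\phi\in\A_d$, from which it follows readily that $T|_{\ran P}$ is AC with $\Ann(T)\subset\Ann(T|_{\ran P})$. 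The paper exploits exactly this: once $\sigma(T|_{\ran P})=\{z\}$ and $z$ is isolated in $\Z_{\BB_d}(\Ann(T|_{\ran P}))$, Theorem~\ref{T:GetNilp} gives $\fp(\Ann(T),z)\subset\fp(\Ann(T|_{\ran P}),z)\subset\Ann(T|_{\ran P})$ in one stroke, so every $p\in\fp(\Ann(T),z)$ kills $\ran P$. Your dilation/weak-$*$ argument proves the same thing, but it reconstructs by hand what the $\M_d$-functional calculus of $T|_{\ran P}$ already provides for free.

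For the reverse inclusion, the citation of \cite[Proposition IV.25.3]{muller2007} is suspect. The paper invokes that proposition only in the direction ``existence of $A_j$ in the commutant with $\sum(T_j-z_j)A_j=I$ implies $z\notin\sigma(T)$'' (see the proof of Theorem~\ref{T:spectrum}(i)). You are using the converse (produce a \emph{commuting} left-inverting tuple $D_j$ from $z\notin\sigma(S)$), which is a genuinely different and considerably deeper statement about the Taylor spectrum versus the commutant joint spectrum, and it is not clear that the cited proposition provides it. The paper sidesteps this entirely: setting $R=T|_{\fK}$ where $\fK$ is the intersection of the kernels, Lemma~\ref{L:GetPIs} gives $\fm_z^\kappa\subset\fp(\Ann(T),z)$, so $(R-zI)^\kappa$ vanishes coordinate-wise and the spectral mapping theorem forces $\sigma(R)=\{z\}$; then $\chi_B(R)=I$, and Theorem~\ref{T:Taylorprop}(ii) applied to the inclusion $X:\fK\hookrightarrow\fH$ (which intertwines $R$ and $T$) yields $X=X\chi_B(R)=\chi_B(T)X$, hence $\fK\subset\ran\chi_B(T)$. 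This route uses only properties of the Taylor calculus already stated in the paper and requires no inverse-tuple result. If you want to keep your structure, you can at least replace the questionable citation by a self-contained argument: pass to the $S$-invariant subspace $\fL$ generated by $(I-P)\xi$, note $(S|_\fL-zI)^\alpha=0$ for $|\alpha|=\mu$, so $\sigma(S|_\fL)\subset\{z\}$, and then compare $\widetilde\chi(S)=I$ with $\widetilde\chi(S|_\fL)=0$ (where $\widetilde\chi$ is the characteristic function of a neighbourhood of $\sigma(S)$ avoiding $z$) via the intertwining of the inclusion $\fL\hookrightarrow\ker P$ to deduce $\fL=\{0\}$.
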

\begin{proof}
	Note that $\cc{B}$ is disjoint from $\Z_{\bB_d}(\Ann(T))\setminus \{z\}$.
	In light of part (1) of Theorem \ref{T:spectrum}, we see that $\cc{B}$ is also disjoint from $\sigma(T)\setminus \{z\}$. In particular, $\chi_B$ is holomorphic on a neighbourhood of $\sigma(T)$, and $\chi_B(T)$ is a well-defined idempotent in $\{T_1,\ldots,T_d\}''$. Set $\frk{M}=\ran \chi_B(T)$.
	By Theorem \ref{T:Taylorprop}, we find $\frk{M}\neq \{0\}$ and $\sigma(T|_{\frk{M}})=\{z\}$. Furthermore, we note that
	\[
	\Ann(T)\subset \Ann(T|_{\fM})
	\]
	whence 
	\[
	\Z_{\bB_d}(\Ann(T|_\fM))\subset\ZBd(\Ann (T))
	\]
	and $z$ is an isolated point of $\Z_{\bB_d}(\Ann(T|_\fM))$. We may thus apply Theorem \ref{T:GetNilp} to conclude that $\Ann(T|_{\frk{M}})$ is generated by the ideal $\frk{p}(\Ann(T|_{\frk{M}}),z)$, so in particular
	\[
	\fp(\Ann(T),z)\subset \frk{p}(\Ann(T|_{\frk{M}}),z)\subset  \Ann(T|_\fM).
	\]
	Let now $p\in \fp(\Ann(T),z)$. Then, $p\in \Ann(T|_\fM)$ so
	\[
	0=p(T|_\fM)=p(T)|_{\fM}
	\]
	 and hence $\frk{M}\subset\ker  p(T)$. This shows that 
	\[
	\fM\subset \bigcap\{ \ker p(T):p\in \fp(\Ann(T),z)\}.
	\]
	To show the reverse inclusion, we put
	\[
	\fK= \bigcap\{ \ker p(T):p\in \fp(\Ann(T),z)\}
	\]
	and let $R=T|_\fK$. It follows from Lemma \ref{L:GetPIs} that there is a positive integer $\kappa$ such that 
	\[
	\fm_z^\kappa\subset \fp(\Ann(T),z)\subset \Ann(R).
	\]
	In particular, we see that 
	\[
	((R_1-z_1I)^\kappa,(R_2-z_2 I)^\kappa,\ldots,(R_d-z_dI)^\kappa)
	\]
	is simply the zero $d$-tuple, and hence its Taylor spectrum is the origin in $\bC^d$. By the spectral mapping theorem \cite[Corollary IV.30.11]{muller2007}, we must have $\sigma(R)=\{z\}$.
	But then $\chi_B$ is identically $1$ on a neighbourhood of $\sigma(R)$, and it follows that $\chi_B(R)=I$.
	Let 
	$
	X:\fK\to\frk{H}
	$
	be the inclusion map, and note that $XR_j=T_jX$ for $1\leq j\leq d$.
	It follows from Theorem \ref{T:Taylorprop} that 
	\[
	X=X\chi_B(R)=\chi_B(T)X.
	\]
	Thus,
	\[
	\fK=\ran X\subset \ran \chi_B(T)=\fM
	\]
	and the proof is complete.
\end{proof}

Theorem \ref{T:GetNilp} and Lemma \ref{L:RanIdemEqKerPI} taken together hint at a possible approach to construct Jordan-type decompositions. However, to deal with infinite spectra  this procedure would need to be applied inductively infinitely many times, thus causing significant problems regarding convergence for instance. Whenever the zero set of the annihilating ideal forms an interpolating sequence, these difficulties can be circumvented as the next developments showcase.

\begin{lemma}
	Let $T=(T_1,\dots,T_d)$ be an AC commuting row contraction on some Hilbert space $\frk{H}$. Let $\Lambda\subset \bB_d$ be an interpolating sequence such that $\Lambda=\Z_{\bB_d}(\Ann(T))$. Assume that there is a non-negative integer $\kappa$ such that $\fv_\kappa(\Lambda)\subset \Ann(T)$. For each $z\in \Lambda$, let
	\[
	\fK_z=\bigcap\{ \ker p(T):p\in \fp(\Ann(T),z)\}.
	\]
	Then, the following statements hold.
	\begin{enumerate}

	\item[\rm{(i)}] Let $z\in \Lambda$ and let $\theta\in \fv_\kappa(\Lambda\setminus\{z\})$ such that $1-\theta\in \fv_\kappa(\{z\})$. Then, the subspace $\fK_z$ is non-zero and coincides with $\ran\theta(T)$.
		
	\item[\rm{(ii)}] We have that
	\[
		\frk{H}=\bigvee_{z\in \Lambda} \fK_z.
	\]
	\item[\rm{(iii)}] Let $z\in \Lambda$. Then, the ideal $\Ann(T|_{\fK_z})$ is the weak-$*$ closure of $\fp(\Ann(T),z)$ in $\M_d$.
	\end{enumerate}
	\label{L:GleasonCondLemma}
\end{lemma}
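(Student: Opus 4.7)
The proof proceeds in three stages, with part (i) carrying most of the weight; parts (ii) and (iii) will follow with comparative ease once the range identity $\ran\theta(T) = \fK_z$ is secured. A key preliminary I will invoke several times is that Lemma \ref{L:FinVanOrd} guarantees $\frk{m}_z^{\kappa+1} \subset \frk{p}(\Ann(T),z)$.

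For part (i), I first note that $\theta(1-\theta) \in \fv_\kappa(\Lambda) \subset \Ann(T)$: by the Leibniz rule, the factor $\theta$ supplies the required $\kappa$-fold vanishing at each $w \in \Lambda\setminus\{z\}$, while the factor $1-\theta$ does so at $z$. Hence $\theta(T)$ is an idempotent. To establish $\fK_z \subset \ran\theta(T)$, I verify $(1-\theta)(T)\xi=0$ for $\xi\in\fK_z$: Theorem \ref{T:GleasonsTrick} applied to $1-\theta\in\fv_\kappa(\{z\})$ with $N=\kappa+1$ yields $1-\theta=\sum_{|\alpha|=\kappa+1}(x-z)^\alpha\psi_\alpha$, and the preliminary above places every $(x-z)^\alpha$ with $|\alpha|=\kappa+1$ into $\frk{p}(\Ann(T),z)$, killing each summand on $\fK_z$. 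Non-vanishing of $\fK_z$ is supplied by Lemma \ref{L:RanIdemEqKerPI}. The reverse inclusion $\ran\theta(T)\subset\fK_z$ demands $p\theta\in\Ann(T)$ for every $p\in\frk{p}(\Ann(T),z)$, and is the main obstacle: the naïve hope that $p\theta\in\fv_\kappa(\Lambda)$ fails because $p$ need not vanish to order $\kappa$ at $z$. My remedy is a decomposition argument. Starting from a representation $[p]_z=\sum_{j=1}^m[\psi_j]_z[g_j]_z$ with $\psi_j\in\Ann(T)$ and $g_j$ holomorphic near $z$, I split each $g_j$ into its Taylor polynomial $g_j^{\text{low}}$ of degree at most $\kappa$ at $z$ and a remainder whose germ lies in $\widetilde{\fm_z}^{\kappa+1}$. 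Putting $q=\sum_j\psi_jg_j^{\text{low}}\in\Ann(T)$ forces $p-q\in\fv_\kappa(\{z\})$, after which $q\theta\in\Ann(T)$ by the ideal property and $(p-q)\theta\in\fv_\kappa(\Lambda)\subset\Ann(T)$ by Leibniz, so that $p\theta\in\Ann(T)$.

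Part (ii) is then immediate: with $\S=\sum_{z\in\Lambda}\theta_{\{z\}}\M_d$, Lemma \ref{L:GetIdems}(iii) makes $\S+\Ann(T)$ weak-$*$ dense in $\M_d$, and Lemma \ref{L:TotalRange} combined with part (i) yields $\fH=\bigvee_z\ran\theta_{\{z\}}(T)=\bigvee_z\fK_z$. For part (iii), Lemma \ref{L:RanIdemEqKerPI} together with Theorem \ref{T:Taylorprop} delivers $\sigma(T|_{\fK_z})=\{z\}$, so Theorem \ref{T:GetNilp} identifies $\Ann(T|_{\fK_z})$ with the weak-$*$ closure of $\frk{p}(\Ann(T|_{\fK_z}),z)$. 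It remains to show $\frk{p}(\Ann(T|_{\fK_z}),z)=\frk{p}(\Ann(T),z)$, which by Lemma \ref{L:GetPIs} reduces to equality of germ ideals at $z$. Only the containment $\langle\Ann(T|_{\fK_z})\rangle_z\subset\langle\Ann(T)\rangle_z$ requires argument: for $\psi\in\Ann(T|_{\fK_z})$, the range identity $\ran\theta(T)=\fK_z$ from part (i) together with $\psi(T)|_{\fK_z}=0$ forces $\psi\theta\in\Ann(T)$, and then invertibility of $[\theta]_z$ in $\O(z)$ (since $\theta(z)=1$) places $[\psi]_z$ in the germ ideal of $\Ann(T)$ at $z$.
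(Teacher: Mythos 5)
Your proof of parts (i) and (ii) follows the paper's argument in essentially every particular: the Leibniz-rule check that $\theta(1-\theta)\in\fv_\kappa(\Lambda)$, the Gleason representation $1-\theta=\sum_{|\alpha|=\kappa+1}(x-z)^\alpha\psi_\alpha$ combined with $\fm_z^{\kappa+1}\subset\fp(\Ann(T),z)$ to prove $\fK_z\subset\ran\theta(T)$, and the Taylor-polynomial splitting of the germ representation $[p]_z=\sum_j[\psi_j]_z[g_j]_z$ to prove $\ran\theta(T)\subset\fK_z$, are the same steps the authors take. (One small omission: before citing Lemma \ref{L:RanIdemEqKerPI} you should note that $z\in\sigma(T)\cap\bB_d$ by Theorem \ref{T:spectrum}, since that is a hypothesis of the lemma. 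The paper does check this explicitly.)

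Part (iii) is where your route genuinely diverges, and this is the interesting part of the comparison. The paper proves (iii) directly, without any appeal to Theorem \ref{T:GetNilp}: it splits an arbitrary $\phi\in\Ann(T|_{\fK_z})$ as $\phi=p+\psi$ with $p$ a polynomial and $\psi$ in the ideal generated by $\fm_z^{\kappa+1}$ via Theorem \ref{T:GleasonsTrick}, observes that $\psi$ already lies in the weak-$*$ closure of $\fp(\Ann(T),z)$ and that $\psi(T)\theta_z(T)=0$, and then uses $p(T)\theta_z(T)=0$ plus invertibility of $[\theta_z]_z$ to put $p\in\fp(\Ann(T),z)$. Your approach instead applies the structure theorem \ref{T:GetNilp} to the restricted tuple $T|_{\fK_z}$ (after verifying $\sigma(T|_{\fK_z})=\{z\}$ via Lemma \ref{L:RanIdemEqKerPI} and Theorem \ref{T:Taylorprop}), which identifies $\Ann(T|_{\fK_z})$ with the weak-$*$ closure of $\fp(\Ann(T|_{\fK_z}),z)$, and then reduces to the equality of germ ideals $\langle\Ann(T|_{\fK_z})\rangle_z=\langle\Ann(T)\rangle_z$, established by the same $\psi\theta\in\Ann(T)$ and $[\theta]_z$-invertibility trick. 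Both routes are sound and neither is circular, since Theorem \ref{T:GetNilp} precedes this lemma in the paper. Your version has the appeal of recycling a general structure theorem and reducing to a purely algebraic comparison of germ ideals via Lemma \ref{L:GetPIs}; the paper's version is more self-contained and avoids re-verifying the hypotheses of Theorem \ref{T:GetNilp} for the restricted tuple (in particular, you implicitly rely on $T|_{\fK_z}$ being AC and on $z$ being isolated in $\Z_{\bB_d}(\Ann(T|_{\fK_z}))$, both true but left unremarked).
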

\begin{proof}
(i) Fix $z\in \Lambda$. Since $\Lambda$ is an interpolating sequence, $z$ is an isolated point of $\Lambda$, and hence of $\Z_{\bB_d}(\Ann(T))$ by assumption. Furthermore, we see that $z\in \sigma(T)\cap \bB_d$ by Theorem \ref{T:spectrum}. The fact that $\fK_z$ is non-zero then follows immediately from Lemma \ref{L:RanIdemEqKerPI}. Next, let $\theta\in \fv_\kappa(\Lambda\setminus\{z\})$ such that $1-\theta\in \fv_\kappa(\{z\})$. Let $p\in \fp(\Ann(T),z)$, which means that
\[
[p]_z\in \ip{[\phi]_z:\phi\in \Ann(T)}.
\]
Hence, there are multipliers $\phi_1,\ldots,\phi_m\in \Ann(T)$ and functions $f_1,\ldots,f_m$ holomorphic on a neighbourhood of $z$ such that
\[
[p]_z=\sum_{j=1}^m [f_j]_z [\phi_j]_z.
\]
	For each $1\leq j\leq m$, upon writing $f_j$ as a power series convergent around $z$, we see that there is another function $g_j$ holomorphic near $z$ such that 
	$
	[g_j]_z\in \widetilde{\fm_z}^{\kappa+1}
	$ 
	and a polynomial $r_j$ such that 
	\[
	[f_j]_z=[r_j]_z+[g_j]_z.
	\]
Set $\phi=\sum_{j=1}^m r_j \phi_j\in \Ann(T)$. Thus, there is $[g]_z\in \widetilde{\fm_z}^{\kappa+1}$ such that
\[
[p]_z=[\phi]_z+[g]_z.
\]
In particular, we infer that
$
p-\phi\in \fv_{\kappa}(\{z\})
$
whence
\[
(p-\phi)\theta\in \fv_\kappa(\Lambda)\subset \Ann(T)
\]
and therefore $p\theta\in \Ann(T)$. Consequently, we find
$
p(T)\theta(T)=0
$
so that $\ran\theta(T)\subset \ker p(T)$. This shows that $\ran \theta(T)\subset \fK_z$. Conversely, we note that by Lemma \ref{L:FinVanOrd} we have $\fm_z^{\kappa+1}\subset \fp(\Ann(T),z)$ so that
\[
\fK_z\subset  \bigcap_{|\alpha|=\kappa+1} \ker(T-z I)^\alpha.
\]
Now, we have $1-\theta\in \fv_{\kappa}(\{z\})$, so that by Theorem \ref{T:GleasonsTrick} for each $\alpha\in \bN^d$ with $|\alpha|=\kappa+1$ there is $\psi_\alpha\in\M_d$ such that
\[
1-\theta=\sum_{|\alpha|=\kappa+1}(x-z)^\alpha \psi_\alpha.
\]
We conclude that
\[
 \fK_z\subset \bigcap_{|\alpha|=\kappa+1} \ker(T-z I)^\alpha\subset \ker (I-\theta(T)).
\]
Now, 
\[
\theta^2-\theta=\theta(1-\theta)\in \fv_\kappa(\Lambda)\subset \Ann(T)
\]
so that $\theta(T)$ is idempotent and $ \ker (I-\theta(T))=\ran \theta(T)$. We conclude that $\fK_z\subset \ran \theta(T)$, and statement (i) is established.

(ii) Apply Lemma \ref{L:GetIdems} to find for every $z\in \Lambda$ a multiplier $\theta_z\in \fv_\kappa(\Lambda\setminus \{z\})$ such that $1-\theta_z\in \fv_\kappa(\{z\})$ and with the property that the ideal
	\[
	\fv_\kappa(\Lambda)+\sum_{z\in\grL} \theta_{z}\mc{M}_d
	\]
	is weak-$*$ dense in $\mc{M}_d$. By Lemma \ref{L:TotalRange}, we infer that
	\begin{align*}
	\frk{H}&=\bigvee\{\ran \phi(T): \phi\in \theta_z \M_d \text{ for some }z\in \Lambda\}\\
	&=\bigvee_{z\in\grL} \ran\grj_{z}(T)=\bigvee_{z\in\grL} \fK_z
	\end{align*}
	where the last equality follows from statement (i). Thus, statement (ii) holds.

(iii) Let $z\in \Lambda$. It follows immediately from the definition of $\fK_z$ that
	\[
	\fp(\Ann(T),z)\subset \Ann(T|_{\fK_z}).
	\]
	Let now $\phi\in\Ann(T|_{\fK_z})$.
	By Theorem \ref{T:GleasonsTrick} there is a polynomial $p$ and a multiplier $\psi$ in the ideal generated by $\frk{m}_z^{\kappa+1}$ such that 
	\[ \phi=p+\psi. \]
	As noted in the proof of (i), we have  $\frk{m}_z^{\kappa+1}\subset \fp(\Ann(T),z)$, so in fact $\psi$ belongs to the weak-$*$ closure of $ \fp(\Ann(T),z)$ in $\M_d$.
	Since $T$ is AC, we can then infer that
	\[
	\fK_z\subset \ker \psi(T).
	\]
	Applying statement (i) to the function $\theta_z\in \M_d$ defined in the proof of (ii) above, we find that
	\[
	\ran \theta_z(T)=\fK_z
	\]
	whence
	\[
	\phi(T)\theta_z(T)=\phi(T|_{\fK_z})\theta_z(T)=0
	\]
	and
	\begin{align*}
	0&=\phi(T)\theta_z(T)=p(T)\theta_z(T)+\psi(T)\theta_z(T)\\
	&=p(T)\theta_z(T). 
	\end{align*}
	That is, $p\theta_z\in\Ann(T)$ and therefore 
	\[
	[p]_z[\theta_z]_z\in \ip{[\tau]_z:\tau\in\Ann(T)}.
	\]
	Because $\theta_z(z)=1$, it follows that $[\theta_z]_z$ is invertible in $\O(z)$, hence 
	\[
	[p]_z\in \ip{[\tau]_z:\tau\in\Ann(T)}
	\]
	 and therefore $p\in\fp(\Ann(T),z)$. Hence,
	 $
	 \phi=p+\psi
	 $
	 belongs to the weak-$*$ closure of $\fp(\Ann(T),z)$  in $\M_d$ and statement (iii) follows.
\end{proof}

Finally, we arrive at the main result of this section, which is also the central result of the paper. Therein, we obtain a Jordan-type decomposition for AC commuting row contractions whose annihilating ideal is a higher order vanishing ideal of some interpolating sequence.

\begin{theorem}
	Let $T=(T_1,\dots,T_d)$ be an AC commuting row contraction.
	Let $\Lambda\subset \bB_d$ be an interpolating sequence and let $\kappa$ be a non-negative integer such that $\fv_\kappa(\Lambda)\subset \Ann(T)$.
	Then, for each $z\in \Z_{\bB_d}(\Ann(T))$ there is a commuting nilpotent $d$-tuple $N^{(z)}$ such that $zI+N^{(z)}$ is an AC commuting row contraction whose annihilating ideal is generated by $\fp(\Ann (T),z)$.
	Furthermore, $T$ is similar to 
	\[
	\bigoplus_{z\in\ZBd(\Ann T)}(zI+N^{(z)}).
	\]
	\label{T:Decomposition}
\end{theorem}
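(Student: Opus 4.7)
The plan is to assemble the invariant subspaces from Lemma \ref{L:GleasonCondLemma} into a bona fide bounded direct sum, using the uniformly bounded family of idempotents from Lemma \ref{L:GetIdems}. By Theorem \ref{T:spectrum}(ii), the zero set $\ZBd(\Ann(T))=\sigma(T)\cap\BB_d$ is contained in $\Lambda$ and is itself an interpolating sequence. For each $z\in\ZBd(\Ann(T))$, set
\[ \fK_z=\bigcap\{\ker p(T):p\in\fp(\Ann(T),z)\} \qand N^{(z)}=T|_{\fK_z}-zI. \]
Since $\fm_z^{\kappa+1}\subset\fp(\Ann(T),z)$ by Lemma \ref{L:FinVanOrd} and $\fp(\Ann(T),z)\subset\Ann(T|_{\fK_z})$ by construction of $\fK_z$, the tuple $N^{(z)}$ is nilpotent. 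Because $\fK_z$ is invariant under the unital algebra generated by $T_1,\ldots,T_d$, the restriction $T|_{\fK_z}$ is again an AC commuting row contraction; by Lemma \ref{L:GleasonCondLemma}(iii), its annihilating ideal coincides with the weak-$*$ closure of $\fp(\Ann(T),z)$, i.e.\ the ideal generated by $\fp(\Ann(T),z)$ in $\M_d$.

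Next, pick multipliers $\theta_\Omega\in\M_d$ as in Lemma \ref{L:GetIdems} and put $E_\Omega=\theta_\Omega(T)$. These are idempotents of norm bounded by some $M>0$ independently of $\Omega$, and $\ran E_{\{z\}}=\fK_z$ by Lemma \ref{L:GleasonCondLemma}(i). A Leibniz computation at each point of $\Lambda$, using $\theta_{\{z\}}\in\fv_\kappa(\Lambda\setminus\{z\})$ and $1-\theta_{\{z\}}\in\fv_\kappa(\{z\})$, shows that $\theta_{\{z\}}\theta_{\{w\}}\in\fv_\kappa(\Lambda)\subset\Ann(T)$ whenever $z\neq w$, so the idempotents $E_{\{z\}}$ are pairwise orthogonal. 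An analogous calculation gives $\sum_{z\in F}\theta_{\{z\}}-\theta_F\in\fv_\kappa(\Lambda)\subset\Ann(T)$ for every finite $F\subset\Lambda$, and consequently $\sum_{z\in F}E_{\{z\}}=E_F$.

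The principal technical step is a Rademacher-type square-function estimate that I expect to be the main obstacle, as this is where the interpolating hypothesis on $\Lambda$ enters essentially, through the uniform bound on the $\theta_\Omega$. For any finite $F\subset\ZBd(\Ann(T))$ and any $\epsilon\in\{\pm 1\}^F$, split $F$ into $F^\pm=\{z\in F:\epsilon_z=\pm 1\}$ so that $\sum_{z\in F}\epsilon_z E_{\{z\}}=E_{F^+}-E_{F^-}$ has norm at most $2M$. Averaging $\|\sum_{z\in F}\epsilon_z E_{\{z\}}\eta\|^2$ over Rademacher signs (the cross terms vanish by independence) yields
\[ \sum_{z\in F}\|E_{\{z\}}\eta\|^2\leq 4M^2\|\eta\|^2, \qquad \eta\in\fH. \]
The dual estimate $\|\sum_z\xi_z\|\leq 2M\bigl(\sum_z\|\xi_z\|^2\bigr)^{1/2}$ for finitely supported $(\xi_z)\in\bigoplus_z\fK_z$ then follows from $\langle\xi_z,\eta\rangle=\langle\xi_z,E_{\{z\}}^*\eta\rangle$ by Cauchy--Schwarz, together with the analogous Rademacher bound for the adjoints $E_{\{z\}}^*$.

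Finally, define $X:\bigoplus_{z\in\ZBd(\Ann(T))}\fK_z\to\fH$ by $X((\xi_z)_z)=\sum_z\xi_z$. The two estimates above show that $X$ extends to a bounded operator that is bounded below; pairwise orthogonality of the $E_{\{z\}}$ makes $X$ injective, and Lemma \ref{L:GleasonCondLemma}(ii) guarantees that $\ran X$ is dense, so $X$ is surjective and is therefore a similarity. The relation $T_kX=X\bigl(\bigoplus_z(z_kI+N^{(z)}_k)\bigr)$ is immediate from the definition of $X$, completing the proof.
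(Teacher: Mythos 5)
Your proof is correct and reaches the same conclusion, but it replaces the paper's key black-box citation with a direct argument, which is worth noting.

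The paper's proof, after the same reduction via Theorem \ref{T:spectrum} and the same construction of the subspaces $\fK_z$ and idempotents $\theta_{\{z\}}(T)$, establishes the direct sum decomposition by citing the general fact that a uniformly bounded, commuting Boolean algebra of idempotents on a Hilbert space is simultaneously similar to a family of pairwise orthogonal self-adjoint projections (referenced to Gifford and to Paulsen's book). It then produces the invertible $Y$, forms the orthogonal decomposition $\fH=\bigoplus_z Y\fK_z$, and defines $X$ from there. You instead prove the needed estimates from scratch by Rademacher averaging: from $\bigl\|\sum_{z\in F}\epsilon_z E_{\{z\}}\bigr\|\leq 2M$ you extract $\sum_z\|E_{\{z\}}\eta\|^2\leq 4M^2\|\eta\|^2$ (and its adjoint version), which yields both boundedness and boundedness below of the summation map $X:\bigoplus_z\fK_z\to\fH$. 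This is essentially a self-contained unpacking of the cited result in the countably generated, uniformly bounded situation at hand; it buys transparency and elementarity at the cost of a bit more computation. Your route to nilpotence is also marginally more direct: you read it off from Lemma \ref{L:FinVanOrd} (so that $\fm_z^{\kappa+1}\subset\fp(\Ann(T),z)\subset\Ann(T|_{\fK_z})$), whereas the paper detours through Lemma \ref{L:RanIdemEqKerPI} and Theorem \ref{T:GetNilp} to re-derive $\sigma(T|_{\fK_z})=\{z\}$ before concluding. One small presentational remark: the phrase ``the two estimates above show that $X$ is bounded below'' elides the fact that boundedness below comes from applying the square-function estimate to $\eta=X v$ and using $E_{\{w\}}(\sum_z\xi_z)=\xi_w$ (which itself rests on the pairwise orthogonality $E_{\{z\}}E_{\{w\}}=0$ you established); stating this one line explicitly would make the argument airtight.
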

\begin{proof}
Let $\Lambda_0=\Z_{\bB_d}(\Ann(T))$. We see that 
\[
\Lambda_0\subset \Z_{\bB_d}(\fv_\kappa(\Lambda))=\Lambda
\]
where the last equality follows from Theorem \ref{T:ISDim0}. In particular, $\Lambda_0$ is also an interpolating sequence. Now, Theorem \ref{T:spectrum} implies that $\Lambda_0=\sigma(T)\cap \bB_d$ and $\fv_{\kappa}(\Lambda_0)\subset \Ann(T)$, so upon replacing $\Lambda$ by $\Lambda_0$ if necessary, it is no loss of generality to assume that 
\[
\Lambda=\Z_{\bB_d}(\Ann(T))=\sigma(T)\cap \bB_d.
\]
By Lemma \ref{L:GetIdems}, for each subset $\Omega\subset \Lambda$ there is a multiplier $\theta_\Omega\in \fv_\kappa(\Lambda\setminus \Omega)$ such that $1-\theta_\Omega\in \fv_\kappa(\Omega)$ and
	\[
	\sup_{\Omega\subset \Lambda}\|\theta_\Omega\|<\infty.
	\]
In particular, we note that
\begin{equation}\label{Eq:derivative0}
\frac{\pd^\alpha \theta_\Omega}{\pd x^\alpha}(z)=0, \quad z\in \Lambda
\end{equation}
for every $\alpha\in \bN^d$ such that $ |\alpha|\geq 1$ and $|\alpha|\leq \kappa$. Furthermore, for each $\Omega\subset \Lambda$ we have that 
\[
\theta_\Omega(1-\theta_\Omega)\in \fv_{\kappa}(\Lambda)\subset \Ann(T)
\]
which implies that $\theta_\Omega(T)$ is idempotent. Now, given two subsets $\Omega_1,\Omega_2\subset \Lambda$ we denote their symmetric difference by
\[
\Omega_1 \triangle \Omega_2=(\Omega_1\setminus \Omega_2)\cup (\Omega_2\setminus \Omega_1).
\]
A routine verification reveals that the function
\[
\grj_{\grW_1}+\grj_{\grW_2}-2\grj_{\grW_1}\grj_{\grW_2}-\grj_{\grW_1\triangle\grW_2}
\]
vanishes everywhere on $\Lambda$. Combining this observation with Equation (\ref{Eq:derivative0}), it is readily seen that
\[
\grj_{\grW_1}+\grj_{\grW_2}-2\grj_{\grW_1}\grj_{\grW_2}-\grj_{\grW_1\triangle\grW_2}\in \fv_{\kappa}(\Lambda)\subset \Ann(T)
\]
for every subsets $\Omega_1,\Omega_2\subset \Lambda$.
Thus
	\[ \grj_{\grW_1}(T)+\grj_{\grW_2}(T)-2\grj_{\grW_1}(T)\grj_{\grW_2}(T)=\grj_{\grW_1\grD\grW_2}(T). \]
There exists an invertible operator $Y$ such that $\{Y\grj_{\{z\}}(T)Y^{-1}\}_{z\in \Lambda}$ is a family of pairwise orthogonal self-adjoint projections (see \cite[Lemma 1.0.3]{gifford1997} or \cite[Exercise 9.13]{paulsen2002} for instance).

For each $z\in \Lambda$, we put
\[
\fK_z=\bigcap\{ \ker p(T):p\in \fp(\Ann(T),z)\}.
\]
Applying Lemma \ref{L:GleasonCondLemma}, we see that 
$
\frk{H}=\bigvee_{z\in\grL}\fK_z.
$
Moreover, for every $z\in \Lambda$ we have that $\fK_z=\ran \theta_{\{z\}}(T)$ is a non-zero invariant subspace for $T$ and that $\Ann(T|_{\fK_z})$ is the weak-$*$ closure of $\fp(\Ann(T),z)$.
Thus $Y\fK_z$ is orthogonal to $Y\fK_w$ whenever $z,w\in \Lambda$ are distinct, so that
\[
\fH=\bigoplus_{z\in \Lambda}Y\fK_z
\]
where $\fH$ denotes the Hilbert space on which $T$ acts.
Since every $\fK_z$ is invariant for $T$, we have
\[
YTY^{-1}=\bigoplus_{z\in \Lambda} Y_zT|_{\fK_z}Y_z^{-1}
\]
where
\[
Y_z=Y|_{\fK_z}:\fK_z\to Y\fK_z.
\]
Let $X:\frk{H}\to \bigoplus_{z\in\Lambda}\fK_z$ be given by
\[ Xh = (Y^{-1}P_{Y\frk{K}_z}Yh)_{z\in\Lambda} \]
for $h\in\frk{H}$. 
Then $X$ is a boundedly invertible linear map with the property that
\[ XTX^{-1} = \bigoplus_{z\in\Lambda} T|_{\fK_z}. \]

Fix a point $z$ of $\Lambda=\sigma(T)\cap \bB_d=\Z_{\bB_d}(\Ann(T))$, which is necessarily isolated. Invoke Lemma \ref{L:RanIdemEqKerPI}  to find an open ball $B$ containing $z$ such that $\ol{B}\subset \bB_d$ and $\sigma(T)\cap\cc{B}=\{z\}$, and if $\chi_B$ denotes the characteristic function of $B$ then 
\[
\ran\chi_B(T)=\frk{K}_z.
\]
In turn, use Theorem \ref{T:Taylorprop} to find
$
\sigma(T|_{\fK_z})=\{z\}.
$
Since
$
\Ann(T)\subset \Ann(T|_{\fK_z}),
$
we conclude that
\[
\Z_{\bB_d}(\Ann(T|_{\fK_z}))\subset \Z_{\bB_d}(\Ann(T))
\]
so that $z$ is an isolated point of $\Z_{\bB_d}(\Ann(T|_{\fK_z}))$. We may apply Theorem \ref{T:GetNilp} to see that 
\[
T|_{\frk{K}_z}=zI+N^{(z)}
\]
for some nilpotent $d$-tuple $N^{(z)}$ acting on $\fK_z$ whose annihilating ideal is generated by $\fp(\Ann (T),z)$ (by Lemma \ref{L:GleasonCondLemma}).

\end{proof}

We mention that the previous theorem generalizes \cite[Corollary 3.3]{clouatreSIM1} in two ways: it extends it to the multivariate setting, and it allows for a wider range of annihilating ideals. The reader may also wish to compare with \cite[Theorem 5.7]{clouatreSIM1}. We close this section with a reformulation of Theorem \ref{T:Decomposition} in the case where $\kappa=0$.

\begin{corollary}\label{C:decompkappa0}
Let $T=(T_1,\dots,T_d)$ be an AC commuting row contraction.
Let $\Lambda\subset \bB_d$ be an interpolating sequence such that $\fv_0(\Lambda)\subset \Ann(T)$.
Then, $T$ is similar to 
	$
	\bigoplus_{z\in\ZBd(\Ann T)}zI.
	$
If in addition we assume that $T$ has a cyclic vector, then $T$ is similar to 
$
	\bigoplus_{z\in\ZBd(\Ann T)} z.
	$
\end{corollary}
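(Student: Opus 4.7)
The plan is to apply Theorem \ref{T:Decomposition} with $\kappa = 0$ and then show that each of the resulting nilpotent $d$-tuples $N^{(z)}$ is actually zero, so the decomposition collapses to a direct sum of scalars. Theorem \ref{T:Decomposition} gives that $T$ is similar to $\bigoplus_{z \in \ZBd(\Ann T)} (zI + N^{(z)})$ with the annihilating ideal of $zI + N^{(z)}$ generated by $\fp(\Ann(T), z)$, so the whole problem reduces to identifying these polynomial ideals at each point.

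For the first conclusion, the key step will be to observe that when $\kappa = 0$, Lemma \ref{L:FinVanOrd} yields $\fm_z \subset \fp(\Ann(T), z)$, and then to check equality. This follows from maximality of $\fm_z$ together with the fact that $\fp(\Ann(T), z)$ is a proper ideal: if it contained $1$, then $zI + N^{(z)}$ would be annihilated by the constant $1$, which is impossible since the underlying space $\fK_z$ appearing in the proof of Theorem \ref{T:Decomposition} is non-zero by Lemma \ref{L:GleasonCondLemma}(i). Once $\fp(\Ann(T), z) = \fm_z$ is in hand, each $x_j - z_j$ annihilates $zI + N^{(z)}$, from which one immediately reads off $N^{(z)}_j = 0$ for all $j$. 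Thus $T$ is similar to $\bigoplus_z zI|_{\fK_z}$, which is the first assertion.

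For the cyclic refinement, I will transport a cyclic vector $\xi$ for $T$ through the implementing similarity $X$ to obtain a cyclic vector $\eta = X\xi$ for $S := \bigoplus_z zI|_{\fK_z}$. Since a polynomial $p$ applied to $S$ acts as the scalar $p(z)$ on the block $\fK_z$, we have $p(S)\eta = \bigoplus_z p(z)\eta_z$, and the projection of the polynomial orbit of $\eta$ onto any fixed block $\fK_{z_0}$ lies inside the one-dimensional subspace $\bC \eta_{z_0}$. Cyclicity forces this projection to be dense in $\fK_{z_0}$, so $\fK_{z_0} = \bC\eta_{z_0}$ for every $z_0$. Each block is therefore one-dimensional, and the decomposition takes the stated scalar form $\bigoplus_{z} z$.

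No step here strikes me as delicate given the machinery already assembled; the bulk of the work is absorbed into Theorem \ref{T:Decomposition} and Lemma \ref{L:FinVanOrd}. The only mild subtlety is ruling out $\fp(\Ann(T), z) = \bC[x_1, \ldots, x_d]$, which is handled by recalling that the subspaces $\fK_z$ produced in the proof of Theorem \ref{T:Decomposition} are non-trivial.
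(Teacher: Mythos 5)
Your proposal is correct and follows essentially the same path as the paper: apply Theorem \ref{T:Decomposition}, show $\fm_z\subset\fp(\Ann(T),z)$ so that each $N^{(z)}$ is annihilated by the coordinate functions and must vanish, then in the cyclic case force each block to be one-dimensional. The only small differences are cosmetic: the paper obtains $\fm_z\subset\PI(\Ann(T),z)$ by directly noting $\fm_z=\PI(\fv_0(\Lambda),z)$ rather than quoting Lemma \ref{L:FinVanOrd}, and you insert an extra (correct but unnecessary) step establishing the equality $\fp(\Ann(T),z)=\fm_z$, whereas the inclusion alone already kills $N^{(z)}$.
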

\begin{proof}
 By virtue of Theorem \ref{T:Decomposition}, for each $z\in \Z_{\bB_d}(\Ann(T))$ there is a commuting nilpotent $d$-tuple $N^{(z)}$ such that $z I+N^{(z)}$ is an AC commuting row contraction whose annihilating ideal is generated by $\fp(\Ann(T),z)$.
 Furthermore, $T$ is similar to 
	\[
	\bigoplus_{z\in\ZBd(\Ann T)}(zI+N^{(z)}).
	\]
It is readily verified that $\frk{m}_z=\PI(\fv_0(\grL),z)$, whence 
\[
\fm_z\subset \PI(\Ann(T),z)\subset \Ann(z I+N^{(z)})
\]
and so $N^{(z)}=0$.
 Hence $T$ is in fact similar to $\bigoplus_{z\in\ZBd(\Ann T)}zI$.
 Finally, if $T$ has a cyclic vector, then so does $\bigoplus_{z\in\ZBd(\Ann T)}zI$, which forces the identity operators appearing in this decomposition to act on one-dimensional spaces, whence $T$ is indeed similar to $	\bigoplus_{z\in\ZBd(\Ann T)}z$.
\end{proof}

\section{Application: an operator theoretic characterization of interpolating sequences}\label{S:siminterp}

As a first application of Theorem \ref{T:Decomposition}, in this section we explore a characterization of interpolating sequences phrased purely in operator theoretic terms. More precisely, we seek to obtain a multivariate version of \cite[Theorem 4.4]{clouatreSIM1}. 

We begin by recording a simple observation.

\begin{lemma}\label{L:gen}
	Let $\{\omega_n:n\in \bN\}\subset \M_d$ be a collection of multipliers, let $\fH$ be separable Hilbert space and let
	\[
		\Omega:\bB_d\to B(\fH,\bC)
	\]
	be the associated row multiplier defined as
	\[
	\Omega(z)=\begin{bmatrix} \omega_1(z) & \omega_2(z) & \ldots & \end{bmatrix}
	\]
	for every $z\in \bB_d$. Assume that $\Omega$ is bounded. Let $\fa\subset \M_d$ be a weak-$*$ closed ideal such that 
	\[
	\ol{\ran M_\Omega}=[\fa H^2_d].
	\]
	 Then, $\fa$ is the weak-$*$ closure of $\ip{\omega_n:n\in \bN}.$
\end{lemma}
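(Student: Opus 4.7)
The plan is to let $\fb$ denote the weak-$*$ closure in $\M_d$ of the ideal $\langle \omega_n : n \in \bN\rangle$; since multiplication in $\M_d$ is separately weak-$*$ continuous, $\fb$ is itself a weak-$*$ closed ideal. By Theorem \ref{T:Beurling}(ii), the equality $\fa=\fb$ will follow once we show that
\[ [\fb H^2_d] = \overline{\ran M_\Omega}, \]
since by hypothesis the right-hand side equals $[\fa H^2_d]$.

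For the inclusion $[\fb H^2_d]\subset \overline{\ran M_\Omega}$, I would first observe that for each fixed $n$ and each $h\in H^2_d$, applying $M_\Omega$ to the element of $H^2_d\otimes \fH$ with $h$ in the $n$th coordinate and zeros elsewhere produces $\omega_n h$. Hence $\langle \omega_n : n\in \bN\rangle \cdot H^2_d\subset \ran M_\Omega$. To upgrade this to the weak-$*$ closure, take any $\phi\in\fb$ and choose a net $(\phi_j)$ in $\langle \omega_n : n\in \bN\rangle$ converging weak-$*$ to $\phi$. Since the weak-$*$ topology on $\M_d$ is finer than the weak operator topology on $B(H^2_d)$, the vectors $\phi_j h$ converge weakly in $H^2_d$ to $\phi h$ for every $h\in H^2_d$; as each $\phi_j h$ lies in the norm-closed (and therefore weakly closed) subspace $\overline{\ran M_\Omega}$, so does $\phi h$. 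Thus $\fb H^2_d\subset \overline{\ran M_\Omega}$, and the claimed inclusion follows.

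For the reverse inclusion $\overline{\ran M_\Omega}\subset [\fb H^2_d]$, I would use the boundedness of $M_\Omega$ together with a finite-support approximation in $H^2_d\otimes\fH$ (identifying $\fH$ with $\ell^2$). Given $g=(g_n)_{n\in\bN}\in H^2_d\otimes \fH$, the truncations $g^{(N)}=(g_1,\dots,g_N,0,0,\dots)$ converge in norm to $g$, so $M_\Omega g^{(N)}\to M_\Omega g$ in $H^2_d$. Each truncation satisfies
\[ M_\Omega g^{(N)}=\sum_{n=1}^N \omega_n g_n \in \langle \omega_n : n\in \bN\rangle\cdot H^2_d \subset [\fb H^2_d], \]
and since $[\fb H^2_d]$ is closed, the limit $M_\Omega g$ also belongs to $[\fb H^2_d]$. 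Taking closures gives the desired inclusion. The two inclusions together with Theorem \ref{T:Beurling}(ii) then yield $\fa=\fb$.

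The arguments here are largely soft, relying on Theorem \ref{T:Beurling}(ii) as a black box; the only point requiring a little care is the passage from the finitely-generated ideal to its weak-$*$ closure, which I handle by exploiting that weak-$*$ convergence in $\M_d$ implies WOT convergence of multiplication operators. I do not anticipate a serious obstacle.
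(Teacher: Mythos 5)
Your proof is correct and follows the same approach as the paper: define $\fb$ as the weak-$*$ closure of the ideal generated by the $\omega_n$, show that $[\fb H^2_d]=\overline{\ran M_\Omega}$, and invoke Theorem~\ref{T:Beurling}(ii). The paper compresses the two inclusions you carefully verify into a one-line chain of equalities, but the underlying reasoning (finite truncations for one direction, weak-$*$ to WOT passage for the other) is exactly what you spelled out.
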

\begin{proof}
Let $\fb\subset\M_d$ denote the weak-$*$ closure of $\ip{\omega_n:n\in \bN}.$ Then
	\[
		[\frb H^2_d]=\ol{\sum_n \omega_n H^2_d}=\ol{\Omega H^2_d(\fH)}=[\fa H^2_d]
	\]
so that $\frb=\fa$ by Theorem \ref{T:Beurling}.
\end{proof}

Another elementary fact we single out relates to compressions of partial isometries.

\begin{lemma}\label{L:partialisom}
The following statements hold.
\begin{enumerate}

\item[\rm{(i)}] Let $\{V_n:n\in \bN\}$ be a family of contractions on some Hilbert space. Assume that the row operator 
	\[
		V=\begin{bmatrix} V_1 & V_2 & \ldots & \end{bmatrix}	
	\]
	is a partial isometry.
 	Let $\fM$ be a closed subspace which is coinvariant for $V_n$ for each $n\in \bN$ and such that $\fM^\perp\subset \ran V$.
  Then, $ P_{\fM}VP_{\fM}$ is a partial isometry.
  
  \item[\rm{(ii)}] Let $\fa\subset \M_d$ be a weak-$*$ closed ideal. Let $\fH$ be a Hilbert space and let $\Omega:\bB_d\to B(\fH,\bC)$ be an inner multiplier such that $[\fa H^2_d]\subset \ran M_\Omega$. Then,  $P_{\H_\fa} M_\Omega|_{\H_\fa}$ is a partial isometry.
\end{enumerate}
\end{lemma}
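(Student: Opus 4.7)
The plan is to establish (i) by a direct calculation showing that the compression $W := P_\fM V|_{\fM^{(\infty)}}$ (where $\fM^{(\infty)}=\bigoplus_n \fM$, viewed as a subspace of the domain of $V$) satisfies $WW^*W=W$, and then to derive (ii) as an immediate specialization with $V=M_\Omega$ and $\fM = \H_\fa$.

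For (i), the argument rests on two observations. First, the coinvariance hypothesis $V_n^*\fM \subset \fM$ for every $n$ implies that $V^*\xi = (V_n^*\xi)_n \in \fM^{(\infty)}$ for every $\xi\in\fM$. Consequently $W^*\xi = V^*\xi$, i.e.\ $W^*$ is simply $V^*|_\fM$ landing in $\fM^{(\infty)}$. Second, the containment $\fM^\perp \subset \ran V$ forces $W\eta$ to lie in $\ran V$ for every $\eta \in \fM^{(\infty)}$. Indeed,
\[ W\eta = P_\fM V\eta = V\eta - P_{\fM^\perp}V\eta, \]
where $V\eta \in \ran V$ and $P_{\fM^\perp}V\eta \in \fM^\perp \subset \ran V$. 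Since $V$ is a partial isometry, $VV^* = P_{\ran V}$, so combining the two observations yields
\[ WW^*W\eta = P_\fM\, VV^*\, W\eta = P_\fM\, W\eta = W\eta, \]
using that $W\eta\in \ran V \cap \fM$ in the second equality and $W\eta\in\fM$ in the third. Thus $W$ is a partial isometry.

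To deduce (ii), I apply (i) with $\fK = H^2_d$, $V = M_\Omega$ with row entries $V_n = M_{\omega_n}$ for the coordinates of $\Omega$, and $\fM = \H_\fa$. That $M_\Omega$ is a partial isometry is exactly the assumption that $\Omega$ is inner. The coinvariance of $\H_\fa$ under each $M_{\omega_n}$ is equivalent to $M_{\omega_n}$-invariance of $\H_\fa^\perp = [\fa H^2_d]$, which holds because $\fa$ is a two-sided ideal in $\M_d$: for every $\psi\in \M_d$ and $f\in \fa$ one has $\psi f \in \fa$, so $M_\psi[\fa H^2_d]\subset [\fa H^2_d]$. Finally, the condition $\fM^\perp \subset \ran V$ becomes exactly the stated hypothesis $[\fa H^2_d]\subset \ran M_\Omega$. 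The conclusion of (ii) is then what (i) gives.

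The bookkeeping is straightforward; the substantive step is the second observation in (i), namely that the compressed image $P_\fM V\eta$ cannot leak out of $\ran V$, because anything it might lose to $\fM^\perp$ was already inside $\ran V$ to begin with. This is what makes $VV^*$ act as the identity on $\ran W$ and so produces the partial isometry relation.
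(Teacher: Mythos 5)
Your argument is correct and rests on the same two observations the paper uses: coinvariance gives $W^*\xi=V^*\xi$ for $\xi\in\fM$, and $\fM^\perp\subset\ran V$ keeps $\ran W$ inside $\ran V$, so $VV^*=P_{\ran V}$ acts as the identity on $\ran W$. The only difference is bookkeeping --- the paper expands $WW^*$ coordinatewise, simplifies via coinvariance, and identifies the result as the projection onto $\ran V\cap\fM$, whereas you verify the relation $WW^*W=W$ directly while treating $V$ as a single row operator; the reduction of (ii) to (i) is the same.
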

\begin{proof}
(i) Since $\fM^\perp\subset \ran V$, we see that 
\begin{align*}
P_{\fM}P_{\ran V}&=(I-P_{\fM^\perp})P_{\ran V}=P_{\ran V}-P_{\fM^\perp}\\
&=P_{\ran V\ominus \fM^\perp}=P_{\ran V\cap \fM}.
\end{align*}
 Using the fact that $\fM$ is coinvariant for each $V_n$, we find
\begin{align*}
( P_{\fM}VP_{\fM})( P_{\fM}VP_{\fM})^*&=\sum_{n=1}^\infty P_{\fM} V_n P_{\fM} V_n^* P_{\fM}=P_{\fM}\left( \sum_{n=1}^\infty V_n V_n^*\right)P_{\fM}\\
&=P_{\fM}VV^*P_{\fM}=P_{\fM}P_{\ran V}P_{\fM}\\
&=P_{\ran V\cap \fM}.
\end{align*}
We conclude that $ P_{\fM}VP_{\fM}$ is a partial isometry.

(ii) This follows immediately from (i).
\end{proof}

We remark that statement (ii) in the previous result is analogous to a classical fact \cite[Problem III.1.11]{bercovici1988}, which says that if $\theta\in H^\infty$ is an inner function and $\omega$ is an inner divisor of $\theta$, then $\omega(S_\theta)$ is a partial isometry
 Here, $S_\theta$ denotes the one-variable model operator.

Next, we obtain a sort of converse to Theorem \ref{T:Decomposition}. Roughly speaking, it says that a sequence can be determined to be strongly separated (see Subsection \ref{SS:DA}) if there exists a certain Jordan-type decomposition.

\begin{theorem}\label{T:simss}
	Let $\Lambda\subset \bB_d$ be a countable subset and let $\fa\subset \M_d$ be a weak-$*$ closed ideal. Assume that there is an invertible operator $X$ such that
	\[
	X^{-1}Z^\fa X=\bigoplus_{\lambda\in \Lambda} \lambda.
	\]
	Then, $\Lambda$ is a strongly separated sequence. Furthermore,  if we let $\fa_{\lambda}=\fv_0(\Lambda\setminus \{\lambda\})$ for each $\lambda\in \Lambda$, then there is an inner multiplier $\Omega_\lambda$ with the property that 
	\[
	\ran M_{\Omega_\lambda}=[\fa_\lambda H^2_d]
	\]
	and
	\[
		\inf_{\lambda\in \Lambda}\|\Omega_\lambda (\lambda)\|\geq \frac{1}{\|X\| \|X^{-1}\|}.
	\]

	\end{theorem}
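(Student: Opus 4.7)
The plan is to exploit the similarity to produce, for each $\lambda\in\Lambda$, a concrete eigenvector $g_\lambda:=Xe_\lambda\in\H_\fa$ of $Z^\fa$ at $\lambda$, which will serve as a test vector in $[\fa_\lambda H^2_d]$. Since annihilating ideals are preserved under similarity, $\fa=\Ann(Z^\fa)=\Ann(\bigoplus_\Lambda\lambda)=\frk{v}_0(\Lambda)$, and in particular $\fa\subseteq\fa_\lambda$ and $[\fa H^2_d]\subseteq[\fa_\lambda H^2_d]$. The biorthogonal mate $f_\lambda:=X^{-*}e_\lambda$ is a joint eigenvector of $(Z^\fa)^*$ at $\overline{\lambda}$; because the $\overline{\lambda}$-eigenspace of the diagonal $D^*$ is one-dimensional, the corresponding eigenspace of $(Z^\fa)^*$ is also one-dimensional, and since $k_\lambda\in\H_\fa$ (as $\lambda\in\mathcal{Z}(\fa)$) is evidently a nonzero eigenvector at the same eigenvalue, $f_\lambda=c_\lambda k_\lambda$ for some $c_\lambda\neq 0$. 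The biorthogonality $\langle g_\mu,f_\nu\rangle=\delta_{\mu\nu}$ becomes $\overline{c_\nu}g_\mu(\nu)=\delta_{\mu\nu}$, so $g_\lambda$ vanishes on $\Lambda\setminus\{\lambda\}$ and $|g_\lambda(\lambda)|=1/|c_\lambda|$; combined with $\|g_\lambda\|\leq\|X\|$ and $|c_\lambda|\|k_\lambda\|=\|f_\lambda\|\leq\|X^{-1}\|$, this yields $|g_\lambda(\lambda)|\geq\|k_\lambda\|/\|X^{-1}\|$.

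The inner multiplier $\Omega_\lambda$ exists by Theorem~\ref{T:Beurling}(i). A short reproducing-kernel computation shows $M_{\Omega_\lambda}^*k_\lambda=k_\lambda\otimes\Omega_\lambda(\lambda)^*1$, so $\|M_{\Omega_\lambda}^*k_\lambda\|=\|\Omega_\lambda(\lambda)\|\,\|k_\lambda\|$; on the other hand, the partial-isometry identity $M_{\Omega_\lambda}M_{\Omega_\lambda}^*=P_{[\fa_\lambda H^2_d]}$ gives $\|M_{\Omega_\lambda}^*k_\lambda\|=\|P_{[\fa_\lambda H^2_d]}k_\lambda\|$. The bound therefore reduces to $\|P_{[\fa_\lambda H^2_d]}k_\lambda\|\geq\|k_\lambda\|/(\|X\|\|X^{-1}\|)$. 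Since $\|P_{[\fa_\lambda H^2_d]}k_\lambda\|\geq|\langle k_\lambda,v\rangle|$ for any unit $v\in[\fa_\lambda H^2_d]$, testing against $v=g_\lambda/\|g_\lambda\|$ delivers exactly $|g_\lambda(\lambda)|/\|g_\lambda\|\geq\|k_\lambda\|/(\|X\|\|X^{-1}\|)$, provided one can place $g_\lambda$ inside $[\fa_\lambda H^2_d]$.

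The placement of $g_\lambda$ is the technical heart. It suffices to produce a separating multiplier $\theta_\lambda\in\fa_\lambda$ with $\theta_\lambda(\lambda)=1$: writing $g_\lambda=(1-\theta_\lambda)g_\lambda+\theta_\lambda g_\lambda$, the second term lies in $\fa_\lambda H^2_d\subseteq[\fa_\lambda H^2_d]$, while Theorem~\ref{T:GleasonsTrick} applied to the vanishing-at-$\lambda$ multiplier $1-\theta_\lambda$ writes it as $\sum_j(x_j-\lambda_j)\psi_j$, so $(1-\theta_\lambda)g_\lambda=\sum_j\psi_j(x_j-\lambda_j)g_\lambda$ belongs to $[\fa H^2_d]\subseteq[\fa_\lambda H^2_d]$ by the eigenvector identity $(x_j-\lambda_j)g_\lambda\in[\fa H^2_d]$ together with the $M_\psi$-invariance of $[\fa H^2_d]$. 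Such a $\theta_\lambda$ is delivered by Lemma~\ref{L:GetIdems} (with $\kappa=0$ and $\Omega=\{\lambda\}$), once $\Lambda$ is known to be interpolating. To establish the latter I invoke Theorem~\ref{T:AHMR}: using that $\{f_\mu\}=\{X^{-*}e_\mu\}$ is a Riesz basis of $\H_\fa$, the expansion coefficients of any $h\in\H_\fa$ take the form $\overline{c_\mu}h(\mu)$, whence the Riesz upper bound combined with $|c_\mu|\geq 1/(\|X\|\,\|k_\mu\|)$ yields the Carleson estimate $\sum_\mu|h(\mu)|^2/\|k_\mu\|^2\leq(\|X\|\|X^{-1}\|)^2\|h\|^2$ (extended to all of $H^2_d$ via $k_\mu\in\H_\fa$); and since the normalized Riesz sequence $\{f_\mu/\|f_\mu\|\}$ coincides with $\{\widehat{k_\mu}\}$ up to unimodular scalars, phase-optimizing the Riesz lower bound gives $|\langle\widehat{k_\mu},\widehat{k_\nu}\rangle|\leq 1-1/(\|X\|\|X^{-1}\|)^2$, the required pseudo-hyperbolic separation.

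Strong separation of $\Lambda$ then follows immediately from the inner multiplier bound: picking a unit $\xi_\lambda\in\fH_\lambda$ that witnesses $|\Omega_\lambda(\lambda)\xi_\lambda|=\|\Omega_\lambda(\lambda)\|$, the scalar multiplier $\omega_\lambda:=\Omega_\lambda\xi_\lambda$ is contractive (as $M_{\Omega_\lambda}$ is a partial isometry), lies in $\fa_\lambda$ and hence vanishes on $\Lambda\setminus\{\lambda\}$, and satisfies $|\omega_\lambda(\lambda)|=\|\Omega_\lambda(\lambda)\|\geq 1/(\|X\|\|X^{-1}\|)$. The main obstacle throughout is the inclusion $g_\lambda\in[\fa_\lambda H^2_d]$: there is no apparent shortcut from the bare similarity to this inclusion, so one must first distill from the Riesz-basis structure of $\{f_\mu\}$ the two quantitative ingredients---Carleson measure and pseudo-hyperbolic separation---needed to deduce via Theorem~\ref{T:AHMR} that $\Lambda$ is interpolating, thereby unlocking the separating multiplier $\theta_\lambda$ through Lemma~\ref{L:GetIdems}.
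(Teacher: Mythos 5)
Your proof is correct, but it follows a genuinely different route than the paper's. The paper's argument is centred on the commutant lifting theorem: setting $Q_\lambda=XP_\lambda X^{-1}$, it produces a multiplier $\phi_\lambda$ with $\phi_\lambda(Z^\fa)=Q_\lambda$ and $\|\phi_\lambda\|\le\|X\|\|X^{-1}\|$, which immediately witnesses strong separation; for the inner bound it then works with the operator $\Omega_\lambda(D)$ and Lemma~\ref{L:partialisom}, using that $e_\lambda\in\ran\Omega_\lambda(D)$ to find a preimage $v$ of $e_\lambda$ with $\|v\|\le\|X\|\|X^{-1}\|$, and pairs against $e_\lambda$. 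Your approach instead converts the similarity into a reproducing-kernel/Riesz-basis statement: $f_\lambda=X^{-*}e_\lambda$ must be a scalar multiple of $k_\lambda$ by a one-dimensionality count for the $\overline{\lambda}$-eigenspace of $(Z^\fa)^*$, the biorthogonality gives you a norm-controlled eigenvector $g_\lambda=Xe_\lambda$ vanishing on $\Lambda\setminus\{\lambda\}$, and the inner bound reduces to showing $g_\lambda\in[\fa_\lambda H^2_d]$. To get that inclusion you manufacture a separating multiplier $\theta_\lambda$, and to justify its existence you first prove that $\Lambda$ is interpolating via the Riesz-basis estimates and Theorem~\ref{T:AHMR}.

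This is a valid and quantitatively tight argument (I checked the Carleson bound $\sum|h(\mu)|^2/\|k_\mu\|^2\le(\|X\|\|X^{-1}\|)^2\|h\|^2$, the phase-optimized separation bound $|\langle\widehat{k_\mu},\widehat{k_\nu}\rangle|\le 1-1/(\|X\|\|X^{-1}\|)^2$, and the Gleason-trick decomposition that pushes $g_\lambda$ into $[\fa_\lambda H^2_d]$). The trade-off is worth noting, though: your route proves the stronger conclusion that $\Lambda$ is interpolating directly, but at the cost of invoking the deep Aleman--Hartz--McCarthy--Richter characterization, which the paper studiously avoids in this proof (it instead derives interpolating from the similarity separately in Theorem~\ref{T:siminterpequivalence}, via another commutant-lifting argument applied to an arbitrary bounded sequence). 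The paper's proof is shorter, stays within the commutant-lifting toolkit already in heavy use, and yields the bound $\|\Omega_\lambda(\lambda)\|\ge 1/(\|X\|\|X^{-1}\|)$ without passing through interpolating; yours offers a cleaner conceptual picture of where the constant comes from (angle of $k_\lambda$ against $[\fa_\lambda H^2_d]$) and makes the interpolating conclusion transparent. Your minor shortcut of obtaining the strongly separating scalar $\omega_\lambda=\Omega_\lambda\xi_\lambda$ by a single optimal unit vector $\xi_\lambda$ is also slightly slicker than the paper's version (which, in Theorem~\ref{T:SSS}, builds a near-optimal finite linear combination).
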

\begin{proof}
	We identify $\bigoplus_{\lambda\in\Lambda}\CC$ with $\ell^2(\Lambda)$, and denote by $\{e_\lambda:\lambda\in \Lambda\}$ the standard orthonormal basis of $\ell^2(\Lambda)$.
	Let $D=\bigoplus_{\lambda\in\Lambda}\lambda$ which is an AC commuting row contraction on $\ell^2(\Lambda)$.
	By assumption, there exists an invertible operator $X:\ell^2(\Lambda)\to \H_{\fa}$ such that $XDX^{-1}=Z^\fa$.
	In particular, 
	\[ \fv_0(\Lambda)=\Ann(D)=\Ann(Z^\fa)=\frk{a} \]
  where the last equality follows from \cite[Lemma 2.10]{CT2018}.
	Given $\lambda\in\Lambda$, let $P_\lambda$ denote the projection onto $\CC e_\lambda$, and let $Q_\lambda = XP_\lambda X^{-1}$.
	Then $Q_\lambda$ commutes with $Z^\fa$, and by the Commutant Lifting Theorem \cite[Theorem 5.1]{BTV2001} there exists $\phi_\lambda\in\M_d$ such that $\phi_\lambda(Z^\fa)=Q_\lambda$ and
	\[ \|\phi_\lambda\|=\|Q_\lambda\| \leq \|X\|\|X^{-1}\|. \]
	Fix $\lambda\in \Lambda$. Plainly $DP_\lambda = \lambda P_\lambda$, whence $Z^\fa Q_\lambda = \lambda Q_\lambda$, and
	\[ Q_\lambda = Q_\lambda^2 = \phi_\lambda(Z^\fa)Q_\lambda = \phi_\lambda(\lambda)Q_\lambda, \]
	so that $\phi_\lambda(\lambda)=1$.
	When $\mu\in\Lambda\bksl\{\lambda\}$, we have
	\[ 0=Q_\lambda Q_\mu = \phi_\lambda(Z^\fa)Q_\mu = \phi_\lambda(\mu)Q_\mu, \]
	and thus $\phi_\lambda(\mu)=0$.
	We conclude that $\phi_\lambda \in \fa_\lambda$, and that the collection of multipliers $\{\phi_\lambda:\lambda\in\Lambda\}$ witnesses the fact that $\Lambda$ is strongly separated.

	Next, by Theorem \ref{T:Beurling}  there is a Hilbert space $\frk{X}$ such that for each $\lambda\in\Lambda$ there is an inner multiplier $\Omega_\lambda:\bB_d\to B(\frk{X},\bC)$ with $\ran \Omega_\lambda=[\fa_{\lambda} H^2_d]$.
	Because
	\[ \frk{a}=\fv_0(\Lambda)\subset \fa_\lambda, \]
	we have that $[\fa H^2_d]\subset \ran\Omega_\lambda$.
	In light of Lemma \ref{L:partialisom}, we infer that the row operator
	\[ \Omega_\lambda(Z^\fa)=P_{\H_\fa}M_{\Omega_\lambda}|_{\H_\fa}:\H_\fa\otimes\frk{X}\to\H_\fa \]
	is a partial isometry for every $\lambda\in\Lambda$. Consider now the row operator 
	\[
	\Omega_\lambda(D): \ell^2(\Lambda)\otimes\frk{X}\to \ell^2(\Lambda)
	\]
	defined as
	\[ \Omega_\lambda(D)v = (\Omega_\lambda(\mu)v_\mu)_{\mu\in\Lambda} \]
	for every $v=(v_\mu)_{\mu\in \Lambda}\in \ell^2(\Lambda)\otimes \fX$. We observe that $X\Omega_\lambda(D)X^{-1}=\Omega_\lambda(Z^\fa)$.

Fix $\lambda\in \Lambda$. Let $h\in\ran \Omega_\lambda(D)$. It is then readily verified that $Xh$ lies in the range $\Omega_\lambda(Z^\fa)$. Hence, we may choose $f\in\H_\fa\otimes\frk{X}$ such that $\Omega_\lambda(Z^\fa)f=Xh$ and $\|f\|=\|Xh\|$, which implies that
	\[
		\Omega_\lambda (D)X^{-1}f=X^{-1}\Omega_\lambda(Z^\fa)f=h.
	\]
	Note also that 
	\[
		\|X^{-1}f\|\leq \|X^{-1}\| \|f\|\leq \|X^{-1}\| \|X\| \|h\|.
	\]
 Let $\grw_1,\grw_2,\ldots$ be contractive multipliers such that
	\[ \grW_\lambda(z) = \begin{bmatrix} \grw_1(z) & \omega_2(z) & \ldots \end{bmatrix}, \quad z\in \bB_d. \]
	By Lemma \ref{L:gen}, $\fa_{\lambda}$ is the weak-$*$ closed ideal generated by $\{\omega_m:m\in\bN\}$. Since $\phi_\lambda\in \fa_{\lambda}$ and since $D$ is absolutely continuous, we conclude that $\phi_\lambda(D)$ lies in the weak-$*$ closure of the ideal in $B(\fH)$ generated by $\{\grw_m(D):m\in \bN\}$.  Since $e_\lambda=\phi_\lambda(D)e_\lambda$, it follows that 
	\[
	e_\lambda\in \bigvee_{m=1}^\infty \ran\grw_m(D)\subset  \ran \grW_\lambda(D).
	\]
	As see above, there is a $v=(v_\mu)_{\mu\in \Lambda}\in \ell^2(\Lambda)\otimes \X$ 
	with $\|v\|\leq \|X\| \|X^{-1}\|$ such that $\grW_\lambda(D)v=e_\lambda$. We conclude that
	\begin{align*}
	1&=\|e_\lambda\|^2=|\langle \Omega_\lambda(D) v,e_\lambda \rangle|=|\langle \Omega_\lambda(\lambda) v_\lambda,e_\lambda \rangle|\\
	&\leq \|\Omega_\lambda(\lambda)\| \|v\|\leq \|\Omega_\lambda(\lambda)\| \|X\| \|X^{-1}\|
	\end{align*}
	and thus
	\[
	\|\Omega_\lambda(\lambda)\|\geq  \frac{1}{ \|X\| \|X^{-1}\|}.
	\]
	
\end{proof}

One consequence of the previous theorem is that the sequence $\Lambda$ is both strongly separated and strongly separated by inner multipliers (see Subsection \ref{SS:DA}). This is no coincidence; these notions actually coincide. The proof of this fact requires the following technical tool.

\begin{theorem}\label{T:SSS}
	Let $\lambda\in \bB_d$ and let $\fa\subset \M_d$ be a weak-$*$ closed ideal.
	 Let $\delta>0$.
	 The following statements are equivalent.
	\begin{enumerate}
		\item[\rm{(i)}] There is $\omega\in \fa$ with $\|\omega\|\leq 1$ and  such that $|\omega(\lambda)|>\delta$.
		\item[\rm{(ii)}] There is a separable Hilbert space $\fH$ and an inner multiplier $\Omega:\bB_d\to B(\fH,\bC) $  such that $\|\Omega(\lambda)\|> \delta$ and $\ran \Omega=[\fa H^2_d]$.
	\end{enumerate}
\end{theorem}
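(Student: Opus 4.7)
\bigskip

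\noindent\textbf{Proof proposal for Theorem \ref{T:SSS}.}

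\smallskip

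The plan is to handle the two directions separately, using Theorem \ref{T:Beurling} as the common backbone and invoking a Leech-type factorization for the forward direction.

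For the implication (i) $\Rightarrow$ (ii), I would start by applying Theorem \ref{T:Beurling}(i) to the multiplier-invariant subspace $[\fa H^2_d]\subset H^2_d$ to produce a separable Hilbert space $\fH$ and an inner row multiplier $\Omega:\bB_d\to B(\fH,\bC)$ with $\ran M_\Omega=[\fa H^2_d]$. The goal then reduces to proving the numerical inequality $\|\Omega(\lambda)\|>\delta$. Given $\omega\in\fa$ as in (i), I would observe that $\omega H^2_d\subset [\fa H^2_d]=\ran M_\Omega$, which yields the operator inequality
\[ M_\omega M_\omega^*\leq \|\omega\|^2 P_{\ran M_\omega}\leq P_{[\fa H^2_d]}=M_\Omega M_\Omega^*, \]
where the final equality uses that $M_\Omega$ is a partial isometry. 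By the Leech factorization theorem for $\mc{M}_d$ (which is a standard consequence of commutant lifting for $M_x$, as in \cite{BTV2001}), there exists a contractive multiplier $\Psi:\bB_d\to B(\bC,\fH)$ such that $\omega(z)=\Omega(z)\Psi(z)$ for every $z\in\bB_d$. Evaluating at $\lambda$ yields
\[ \delta<|\omega(\lambda)|=|\Omega(\lambda)\Psi(\lambda)|\leq \|\Omega(\lambda)\|\cdot \|\Psi(\lambda)\|\leq \|\Omega(\lambda)\|, \]
since the pointwise norm of a contractive multiplier is at most one.

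For the implication (ii) $\Rightarrow$ (i), I would exploit the hypothesis $\|\Omega(\lambda)\|>\delta$ to choose a unit vector $v\in\fH$ with $|\Omega(\lambda)v|>\delta$, and define the scalar multiplier $\omega(z)=\Omega(z)v$. Then $M_\omega$ factors as the composition of the contractive map $H^2_d\to H^2_d\otimes\fH$, $f\mapsto f\otimes v$, with $M_\Omega$, so $\|\omega\|\leq \|M_\Omega\|\leq 1$. The remaining task is to verify that $\omega\in\fa$. For this I would introduce the auxiliary set
\[ \fb=\{\psi\in \M_d:\psi H^2_d\subset [\fa H^2_d]\}, \]
check that $\fb$ is a weak-$*$ closed ideal containing both $\fa$ and $\omega$ (the latter because $\omega H^2_d=\Omega(H^2_d\otimes v)\subset \ran M_\Omega=[\fa H^2_d]$), and observe that $[\fb H^2_d]\subset [\fa H^2_d]$ by the very definition of $\fb$. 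Thus $[\fa H^2_d]=[\fb H^2_d]$, and Theorem \ref{T:Beurling}(ii) forces $\fa=\fb$, so $\omega\in\fa$.

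The main obstacle in this argument is the Leech-type factorization invoked in the forward direction: it is not elementary, but it is by now a well-established result for the multiplier algebra $\M_d$, so I would simply cite it. Once that factorization is available, both implications are short, and the only other delicate point to keep in mind is distinguishing a multiplier's sup norm (which is what appears in $\|\Omega(\lambda)\|$) from its multiplier norm; the two happen to be compatible here because the former is dominated by the latter.
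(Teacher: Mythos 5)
Your forward direction, (i) $\Rightarrow$ (ii), is essentially identical to the paper's: invoke Theorem \ref{T:Beurling}(i) to produce an inner $\Omega$ with $\ran M_\Omega=[\fa H^2_d]$, establish the operator inequality $M_\omega M_\omega^*\leq M_\Omega M_\Omega^*$ via $\ol{\ran M_\omega}\subset\ran M_\Omega$, and invoke a factorization theorem (the paper cites \cite[Theorem 1.10]{MT2012} rather than ``Leech'' generically, but it is the same result) to write $\omega=\Omega\Theta$ with $\Theta$ contractive, then evaluate at $\lambda$. No disagreement there.

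Your argument for (ii) $\Rightarrow$ (i) is correct but departs from the paper's. The paper decomposes $\Omega$ into its scalar components $\omega_1,\omega_2,\dots$, shows via Lemma \ref{L:gen} that each $\omega_n\in\fa$, and then picks a \emph{finite} truncation $\omega=\sum_{n=1}^N c_n\omega_n$ with suitably normalized coefficients so that $\|\omega\|\leq 1$ and $|\omega(\lambda)|>\delta$; membership of $\omega$ in $\fa$ is then immediate since it is a finite linear combination of elements of $\fa$. You instead take the maximizing unit vector $v$ from the Riesz representation of the functional $\Omega(\lambda)$ and define $\omega(z)=\Omega(z)v$ directly; this is cleaner analytically but means $\omega$ is an \emph{infinite} linear combination of the $\omega_n$, so you cannot conclude $\omega\in\fa$ from Lemma \ref{L:gen} alone, which is why you introduce the auxiliary set $\fb=\{\psi\in\M_d:\psi H^2_d\subset[\fa H^2_d]\}$. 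That step works, but you do owe a few lines verifying that $\fb$ is weak-$*$ closed: the key observation is that $\psi H^2_d\subset[\fa H^2_d]$ iff $\psi=\psi\cdot 1\in[\fa H^2_d]$ (one direction is trivial; the other follows from density of polynomials and $\M_d$-invariance of $[\fa H^2_d]$), so $\fb$ is cut out by the WOT-continuous, hence weak-$*$ continuous on bounded sets, functionals $\psi\mapsto\langle M_\psi 1,g\rangle$ for $g\in[\fa H^2_d]^\perp$, and Krein--Smulian then gives full weak-$*$ closedness. The paper's finite-truncation trick sidesteps this verification at the small cost of a slightly fussier choice of coefficients, and incidentally this auxiliary-ideal device is exactly what the paper does use later, in the proof of Corollary \ref{C:SSS}, via \cite[Theorem 2.4]{DRS2015}. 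Both routes are sound; yours buys slickness in the choice of $\omega$ at the price of an extra closedness argument, while the paper's buys elementary membership in $\fa$ at the price of a normalization computation.
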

\begin{proof}
	Assume first that there is $\omega\in \fa$ with $\|\omega\|\leq 1$ such that $|\omega(\lambda)|>\delta$. By Theorem \ref{T:Beurling}, there is a separable Hilbert space $\fH$ and an inner multiplier  $\Omega:\bB_d\to B(\fH,\bC)$ such that $\ran M_\Omega=[\fa H^2_d]$. Note then that
	\[
		\ol{\ran M_\omega}\subset [\fa H^2_d]=\ran \Omega.
	\]
	Using that $\ran M_\Omega=(\ker M_\Omega^*)^\perp$ and that $M_\omega M_\omega^*\leq I$, we see that
	\[
		M_\omega M_\omega^* = P_{(\ker M_\Omega^*)^\perp} M_\omega M_\omega^* P_{(\ker M_\Omega^*)^\perp} \leq P_{\ran M_\Omega}=M_\Omega M_\Omega^*. \]
	By \cite[Theorem 1.10]{MT2012}, we conclude that there is a contractive multiplier $\Theta: \bB_d\to B(\bC,\fH)$ such that $\omega=\Omega\Theta$.
 In particular, we see that
	\[
		\delta< |\omega(\lambda)|\leq \|\Omega(\lambda)\| \|\Theta(\lambda)\|\leq \|\Omega(\lambda)\|.
	\]

	Conversely, assume that there is a separable Hilbert space $\fH$ and an inner multiplier $\Omega:\bB_d\to B(\fH,\bC)$  with $\ran \Omega=[\fa H^2_d]$ such that $\|\Omega(\lambda)\|> \delta$. There are contractive multipliers $\{\omega_n:n\in\bN\}$ such that 
	\[
		\Omega(z)=\begin{bmatrix} \omega_1(z) &\omega_2(z) & \ldots &  \end{bmatrix}, \quad z\in \bB_d.
	\]
	Then, by Lemma \ref{L:gen}, we see that $\omega_n\in \fa$ for every $n$.
	Moreover, we observe that
	\[
		\sum_{n=1}^\infty|\omega_n(\lambda)|^2=\|\Omega(\lambda)\|^2> \delta^2.
	\]
	Choose $N\geq 1$ large enough so that 
	\[
		\sum_{n=1}^N|\omega_n(\lambda)|^2>\delta^2.
	\]
	Next, choose $c_1,c_2,\ldots,c_N\in \bC$ such that $\sum_{n=1}^N |c_n|^2=1$ and
	\[
		\sum_{n=1}^N c_n \omega_n(\lambda)=\left(\sum_{n=1}^N|\omega_n(\lambda)|^2 \right)^{1/2}.
	\]
	Set $\omega=\sum_{n=1}^N c_n \omega_n\in \fa$.
	Since
	\[
		\omega=\begin{bmatrix} c_1 & c_2 & \ldots & c_N \end{bmatrix}\begin{bmatrix} \omega_1\\ \omega_2 \\ \vdots \\ \omega_N \end{bmatrix}
	\]
	we see that $\|M_\omega\|\leq 1$.
	Finally, we find
	\[
		\omega(\lambda)=\sum_{n=1}^N c_n \omega_n(\lambda)=\left(\sum_{n=1}^N|\omega_n(\lambda)|^2 \right)^{1/2}>\delta. \qedhere
	\]
\end{proof}

We can now show that the notions of strong separation and of strong separation by inner multipliers coincide.

\begin{corollary}\label{C:SSS}
	Let $\Lambda=\{\lambda_n:n\in \bN\}\subset \bB_d$ be a sequence and let $\delta>0$.
  Then, the following statements are equivalent.
	\begin{enumerate}
		\item[\rm{(i)}] For every $n\in \bN$, there is a contractive multiplier $\omega_n\in \M_d$ with $|\omega_n(\lambda_n)|>\delta$
 and such that $\omega_n(\lambda_m)=0$ for every $m\neq n$.
    \item[\rm{(ii)}] For every $n\in \bN$, there is a separable Hilbert space $\fH_n$ and an inner multiplier $\Omega_n:\bB_d\to B(\fH_n,\bC)$ with $\|\Omega_n(\lambda_n)\|>\delta$ and such that $\Omega_n(\lambda_m)=0$ for every $m\neq n$.
	\end{enumerate}
\end{corollary}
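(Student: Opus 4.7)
The plan is to reduce this to a pointwise application of Theorem~\ref{T:SSS}, with the correct weak-$*$ closed ideal chosen to encode the vanishing conditions. For each $n\in\bN$, let $\fa_n=\fv_0(\Lambda\setminus\{\lambda_n\})$, which is a weak-$*$ closed ideal of $\M_d$ whose members are precisely those multipliers that vanish on every $\lambda_m$ with $m\neq n$. The vanishing conditions in both (i) and (ii) amount exactly to membership in $\fa_n$, so the equivalence should fall out directly from Theorem~\ref{T:SSS} applied to the pair $(\lambda_n,\fa_n)$, together with Lemma~\ref{L:gen}.

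For the implication (i)$\Rightarrow$(ii), I would fix $n$ and observe that the given contractive multiplier $\omega_n$ lies in $\fa_n$ with $|\omega_n(\lambda_n)|>\delta$. Then Theorem~\ref{T:SSS} furnishes a separable Hilbert space $\fH_n$ and an inner multiplier $\Omega_n:\bB_d\to B(\fH_n,\bC)$ with $\|\Omega_n(\lambda_n)\|>\delta$ and $\ran M_{\Omega_n}=[\fa_n H^2_d]$. The one remaining point is the vanishing $\Omega_n(\lambda_m)=0$ for $m\neq n$. For this I would apply Lemma~\ref{L:gen}: since $\ran M_{\Omega_n}$ is closed and equals $[\fa_n H^2_d]$, the lemma implies that $\fa_n$ is the weak-$*$ closure of the ideal generated by the scalar components of $\Omega_n$. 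In particular every component lies in $\fa_n$, hence vanishes at each $\lambda_m$ with $m\neq n$, and consequently $\Omega_n(\lambda_m)=0$.

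For the converse (ii)$\Rightarrow$(i), I would write $\Omega_n=\begin{bmatrix}\omega_1 & \omega_2 & \cdots\end{bmatrix}$ with each $\omega_m$ a contractive scalar multiplier, and note that the hypothesis $\Omega_n(\lambda_k)=0$ for $k\neq n$ forces $\omega_m(\lambda_k)=0$ for every $m$ and every $k\neq n$. The relation
\[
\sum_{m=1}^{\infty}|\omega_m(\lambda_n)|^2=\|\Omega_n(\lambda_n)\|^2>\delta^2
\]
lets me mimic the finite-truncation argument used in the proof of Theorem~\ref{T:SSS}: choose $N$ so that $\sum_{m=1}^{N}|\omega_m(\lambda_n)|^2>\delta^2$, pick unit-norm scalars $c_1,\dots,c_N$ saturating Cauchy--Schwarz at $\lambda_n$, and set $\omega=\sum_{m=1}^{N}c_m\omega_m$. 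Then $\omega$ is contractive as the product of a unit row vector with a contractive column multiplier, $|\omega(\lambda_n)|>\delta$ by construction, and $\omega(\lambda_k)=0$ for $k\neq n$ by linearity, which is exactly (i).

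No serious obstacle is anticipated; the substantive work has already been done in Theorem~\ref{T:SSS} and Lemma~\ref{L:gen}, and the corollary only requires selecting the ideal $\fa_n$ that aligns the hypotheses of the theorem with the separation conditions on $\Lambda$.
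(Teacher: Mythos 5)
Your proposal is correct, and for the implication (i) $\Rightarrow$ (ii) it runs exactly along the paper's lines: apply Theorem~\ref{T:SSS} to the ideal $\fa_n=\fv_0(\Lambda\setminus\{\lambda_n\})$ and use Lemma~\ref{L:gen} to see that the scalar components of the resulting inner $\Omega_n$ lie in $\fa_n$, hence vanish at every $\lambda_m$ with $m\neq n$. For the converse, your route is genuinely more direct than the paper's. The paper introduces the auxiliary weak-$*$ closed ideal $\fc_n=\{\phi\in\M_d:\ran M_\phi\subset\ran M_{\Omega_n}\}$, invokes \cite[Theorem~2.4]{DRS2015} to get $[\fc_n H^2_d]=\ran M_{\Omega_n}\subset[\fa_n H^2_d]$, deduces $\fc_n\subset\fa_n$, and then calls Theorem~\ref{T:SSS} as a black box to extract the desired $\omega_n\in\fc_n\subset\fa_n$. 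You instead observe that $\Omega_n(\lambda_k)=0$ for $k\neq n$ immediately forces every scalar component of $\Omega_n$ to vanish at those points, so the separation condition comes for free, and you replay the finite-truncation and Cauchy--Schwarz-saturation computation from the proof of Theorem~\ref{T:SSS} to produce a contractive scalar $\omega$ with $|\omega(\lambda_n)|>\delta$. This avoids the detour through $\fc_n$ and the appeal to the Beurling-type uniqueness result of \cite{DRS2015}, at the modest cost of repeating the truncation argument rather than reusing Theorem~\ref{T:SSS} wholesale; both proofs are valid, and yours is slightly more self-contained in this direction.
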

\begin{proof}
	For every $n\in \bN$, let $\fa_n=\fv_0(\Lambda\setminus \{\lambda_n\})$.
	Assume that (i) holds and fix $n\in \bN$.  Then, we see that $\omega_n\in \fa_n$, so by Theorem \ref{T:SSS} there is a separable Hilbert space $\fH_n$ and an inner multiplier $\Omega_n:\bB_d\to B(\fH_n,\bC) $ such that $\ran M_{\Omega_n}=[\fa_n H^2_d]$ and $\|\Omega_n(\lambda_n)\|>\delta$.
	Now, there are contractive multipliers $\{\theta_k:k\geq 1\}$ such that
	\[
		\Omega_n(z)=\begin{bmatrix} \theta_1(z) & \theta_2(z) & \ldots &  \end{bmatrix}, \quad z\in \bB_d.
	\]
	Lemma \ref{L:gen} implies that $\theta_k\in \fa_n$ for every $k\in \bN$.
	 In particular, for every $k\in \bN$ and every $m\neq n$  we have $\theta_k(\lambda_m)=0$.
	 Thus, $\Omega_n(\lambda_m)=0$ if $m\neq n$. We conclude that (ii) holds.

	Conversely, assume that (ii) holds and fix $n\in \bN$.  There are contractive multipliers $\{\theta_k:k\geq 1\}$ such that
	\[
		\Omega_n(z)=\begin{bmatrix} \theta_1(z) & \theta_2(z) & \ldots &  \end{bmatrix}, \quad z\in \bB_d.
	\]
	By assumption, we see that $\theta_k\in \fa_n$ for every $k\in \bN$, so that $\ran \Omega_n\subset [\fa_n H^2_d]$. Consider the weak-$*$ closed ideal
	\[
		\fc_n=\{\phi\in \M_d: \ran M_\phi\subset \ran \Omega_n\}.
	\]
	By \cite[Theorem 2.4]{DRS2015} we infer that $[\fc_n H^2_d]=\ran \Omega_n\subset [\fa_n H^2_d]$ and thus $\fc_n\subset \fa_n$.
	Apply now Theorem \ref{T:SSS} to find a contractive multiplier $\omega_n\in \fc_n\subset \fa_n$ satisfying $|\omega_n(\lambda_n)|>\delta$.
	By definition of $\fa_n$, we see that $\omega_n(\lambda_m)=0$ for every $m\neq n$.
\end{proof}

Finally, we can state and prove the main result of this section. The reader may wish to compare it with \cite[Theorem 4.4]{clouatreSIM1}.

\begin{theorem}\label{T:siminterpequivalence}
Let $\Lambda=\{\lambda_n:n\in \bN\}\subset \bB_d$ be a sequence. Consider the following statements.
	\begin{enumerate}
		\item[\rm{(i)}] The sequence $\Lambda$ is interpolating.
		\item[\rm{(ii)}] The $d$-tuple $Z^\fa$ is similar $D=\bigoplus_{\lambda\in\Lambda}\lambda$, where $\fa=\frk{v}_0(\Lambda)$.
		\item[\rm{(iii)}] Every AC commuting row contraction $T=(T_1,\ldots,T_d)$ satisfying $\fv_0(\Lambda)= \Ann(T)$ is similar to $D$. 
		\item[\rm{(iv)}] The sequence $\Lambda$ is strongly separated. 
		\item[\rm{(v)}] The sequence $\Lambda$ is strongly separated by inner multipliers.
	\end{enumerate}
	Then, we have that {\rm (i)} $\Leftrightarrow$ {\rm (ii)} $\Leftrightarrow$ {\rm (iii)} $\Rightarrow$ {\rm (iv)} $\Leftrightarrow$ {\rm (v)}.
\end{theorem}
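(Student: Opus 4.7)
The plan is to establish the chain (iv) $\Leftrightarrow$ (v), (iii) $\Rightarrow$ (ii), (ii) $\Rightarrow$ (i), (i) $\Rightarrow$ (iii), and (iii) $\Rightarrow$ (iv). The equivalence (iv) $\Leftrightarrow$ (v) is precisely Corollary \ref{C:SSS}. The implication (iii) $\Rightarrow$ (ii) is obtained by applying (iii) to the specific operator $T = Z^{\fa}$, which is AC, cyclic, and has $\Ann(Z^{\fa}) = \fa = \fv_0(\Lambda)$ (as in \cite[Lemma 2.10]{CT2018}, already cited in the proof of Theorem \ref{T:simss}). Combining (iii) $\Rightarrow$ (ii) with Theorem \ref{T:simss} immediately yields (iii) $\Rightarrow$ (iv), since the hypothesis of that theorem is precisely the similarity $Z^{\fa} \sim D$.

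For (i) $\Rightarrow$ (iii), let $T$ be an AC commuting row contraction with $\Ann(T) = \fv_0(\Lambda)$. Since $\Lambda$ is interpolating and $\fv_0(\Lambda) \subset \Ann(T)$, Corollary \ref{C:decompkappa0} produces a similarity between $T$ and $\bigoplus_{z \in \Z_{\bB_d}(\Ann T)} z I$, while Theorem \ref{T:ISDim0} identifies $\Z_{\bB_d}(\Ann T) = \Lambda$. When $T$ is cyclic (as is the case for $T = Z^{\fa}$), the strengthened version of Corollary \ref{C:decompkappa0} collapses each eigenspace to be one-dimensional, producing exactly $D$.

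For (ii) $\Rightarrow$ (i), fix an invertible $X : \ell^2(\Lambda) \to \H_{\fa}$ with $X^{-1} Z^{\fa} X = D$. Given any bounded sequence $(a_\lambda)_{\lambda \in \Lambda}$, form the diagonal operator $A \in B(\ell^2(\Lambda))$ by $A e_\lambda = a_\lambda e_\lambda$. Then $A$ commutes with $D$, so $XAX^{-1}$ commutes with $Z^{\fa}$. By the commutant lifting theorem \cite[Theorem 5.1]{BTV2001}, applied exactly as in the proof of Theorem \ref{T:simss}, there is $\phi \in \M_d$ with $\phi(Z^{\fa}) = X A X^{-1}$. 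Conjugating by $X^{-1}$ and using compatibility of the functional calculus with similarity yields $\phi(D) = A$; since $D$ is diagonal with entries $\lambda$, this forces $\phi(\lambda) = a_\lambda$ for every $\lambda \in \Lambda$, and so $\Lambda$ is an interpolating sequence.

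The main obstacle I anticipate is the implication (i) $\Rightarrow$ (iii): Corollary \ref{C:decompkappa0} naturally produces $\bigoplus z I_{\fK_z}$ with possibly higher-dimensional $\fK_z$, and reconciling this with the literal form $D = \bigoplus_\lambda \lambda$ in the theorem requires either an implicit cyclicity assumption on $T$ or reading ``similar to $D$'' up to multiplicity. A secondary delicate point, in (ii) $\Rightarrow$ (i), is ensuring that the multiplier $\phi$ furnished by commutant lifting has the correct scalar values at each $\lambda$; this rests on the compatibility of the Arveson and Taylor functional calculi established in Theorem \ref{T:EqualFC}, and on the fact that $D$ genuinely decomposes as a direct sum of one-dimensional blocks over $\Lambda$.
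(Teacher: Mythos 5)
Your proof follows essentially the same route as the paper's: (iv)$\Leftrightarrow$(v) via Corollary~\ref{C:SSS}, (iii)$\Rightarrow$(ii) by specialization to $Z^{\fa}$, (ii)$\Rightarrow$(i) via commutant lifting, (i)$\Rightarrow$(iii) via Corollary~\ref{C:decompkappa0}, and (ii)$\Rightarrow$(iv) (equivalently your (iii)$\Rightarrow$(iv)) via Theorem~\ref{T:simss}.

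The concern you raise about (i)$\Rightarrow$(iii) is legitimate and in fact points to an imprecision in the theorem's statement itself, not in your argument. As written, (iii) imposes no cyclicity on $T$, and then the statement is false: for instance $T=D\oplus D$ is an AC commuting row contraction satisfying $\Ann(T)=\fv_0(\Lambda)$, yet it is not similar to $D$ since the joint eigenspaces have the wrong dimension. Corollary~\ref{C:decompkappa0}, which is all the paper invokes here, yields $\bigoplus_\lambda \lambda$ only under the additional cyclicity hypothesis; without it one only gets $\bigoplus_z zI_{\fK_z}$ with possibly higher-dimensional blocks, exactly as you observed. So (iii) should be read with ``cyclic'' added (which is all that is actually used downstream, since the only concrete application is to $Z^{\fa}$), and with that correction both your proof and the paper's go through.
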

\begin{proof}
	(i) $\Rightarrow$ (iii): This follows immediately from Corollary \ref{C:decompkappa0}.

	(iii) $\Rightarrow$ (ii) : Obvious.
		
	(ii) $\Rightarrow$ (i): Let $(a_n)_{n=1}^\infty$ be a bounded sequence and consider the operator $A=\oplus_{n=1}^\infty a_n$, which clearly commutes with $D$. Put $\fa=\fv_0(\Lambda)$.  By \cite[Lemma 2.10]{CT2018}, we see that $\Ann(Z^\fa)=\fa$.  Thus, applying (iii) to $Z^\fa$, there is an invertible operator $X$ such that $D=XZ^{\fa}X^{-1}$.
	Hence, $X^{-1}AX$ commutes with $Z^{\fa}$.
	By \cite[Theorem 5.1]{BTV2001}, we find $\phi\in \M_d$ such that $X^{-1}AX=\phi(Z^{\fa})$, and thus
	\[
		A=\phi(XZ^{\fa}X^{-1})=\phi(D).
	\]
	This is easily seen to imply that $\phi(\lambda_n)=a_n$ for every $n\in \bN$, whence $\Lambda$ is an interpolating sequence.

	(ii) $\Rightarrow$ (iv): This follows from Theorem \ref{T:simss}.

	(iv) $\Leftrightarrow$ (v): This is Corollary \ref{C:SSS}.
\end{proof}

The reader will notice that in the univariate setting of \cite[Theorem 4.4]{clouatreSIM1}, all five statements from the previous theorem are equivalent.
In the multivariate world however, it appears to be unknown whether strongly separated sequences are necessarily interpolating.
In fact, this implication is known to fail in the setting of the Dirichlet space on the disc (see \cite{MS1994},\cite{AHMR2017}).


\section{Application: quasi-similarity of certain commuting row contractions}\label{S:QS}

In this  section, we give another application of Theorem \ref{T:Decomposition}. Indeed, we wish to use the Jordan-type decomposition obtained therein to classify certain cyclic AC commuting row contractions up to ``quasi-similarity" by means of their annihilating ideals. 

Recall that given an ideal $\fa\subset \M_d$, we put
\[
\H_\fa=H^2_d\ominus [\fa H^2_d]
\]
and
\[
Z^{\fa}=P_{\H_\fa}M_x|_{\H_\fa}.
\]
Then, $Z^{\fa}$ is an AC commuting row contraction with cyclic vector $P_{\H_\fa}1$. Our first task is to record an elementary criterion for similarity to $Z^\fa$.
\begin{lemma}
	Let $N=(N_1,\cdots,N_d)$ be a commuting nilpotent $d$-tuple and let $z\in\CC^d$. Let $\fa_0\subset \bC[x_1,\ldots,x_d]$ denote the ideal of polynomials that annihilate $zI+N$, and let $\fa\subset \M_d$ denote the ideal generated by $\fa_0$. Assume that $zI+N$ is cyclic. Then, $zI+N$ is similar to $Z^{\fa}$.
	\label{L:NilSimModel}
\end{lemma}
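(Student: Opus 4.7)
The plan is to construct an explicit linear bijection $X\colon \fH \to \H_\fa$ intertwining $T = zI + N$ with $Z^\fa$. Both spaces will turn out to be finite-dimensional, so $X$ is automatically a similarity.

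First, $\fH$ is finite-dimensional: since the $N_j$'s commute and each is nilpotent, the algebra $\bC[N_1,\ldots,N_d]$ is finite-dimensional, and cyclicity of $T$ forces $\fH=\bC[T]\xi$. The natural map $\bC[x_1,\ldots,x_d]\to \fH$, $p\mapsto p(T)\xi$, is then surjective with kernel exactly $\fa_0$ (using cyclicity once more to pass from $p(T)\xi=0$ to $p(T)=0$). Nilpotency also furnishes a $\kappa\geq 0$ with $\fm_z^{\kappa+1}\subset \fa_0\subset \fa$; since the subspace $[\fm_z^{\kappa+1} H^2_d]$ has finite codimension in $H^2_d$ (its orthogonal complement is spanned by the finitely many vectors $\pd^\alpha k_z/\pd\ol{z}^\alpha$ with $|\alpha|\leq \kappa$, via Equation (\ref{Eq:derivative})), the same holds for $[\fa H^2_d]$, making $\H_\fa$ finite-dimensional.

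The main obstacle, which I would tackle next, is the identity $\fa\cap \bC[x_1,\ldots,x_d] = \fa_0$. Inclusion $\supset$ is tautological. For the reverse, given a polynomial $p\in \fa$, write $p=\sum p_j \phi_j$ with $p_j\in\fa_0$ and $\phi_j\in\M_d$; passing to germs at $z$ gives $[p]_z\in \ip{[f]_z:f\in \fa_0}$, so $p\in\PI(\fa_0,z)$, and since $\fm_z^{\kappa+1}\subset \fa_0$, Lemma \ref{L:PIofPI} yields $\PI(\fa_0,z)=\fa_0$.

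Finally, I define $X\colon\fH\to\H_\fa$ by $X(p(T)\xi) = P_{\H_\fa} p$. A short computation using $M_{x_j}$-invariance of $[\fa H^2_d]$ gives the identity $P_{\H_\fa} p = p(Z^\fa)(P_{\H_\fa} 1)$, so the image of $X$ is all of $\H_\fa$ by cyclicity of $P_{\H_\fa}1$ for $Z^\fa$. Well-definedness and injectivity of $X$ come from the chain of equivalences
\[ p(T)\xi = 0 \Leftrightarrow p\in \fa_0 \Leftrightarrow p\in\fa \Leftrightarrow p(Z^\fa)=0 \Leftrightarrow P_{\H_\fa} p = 0, \]
which combines cyclicity of $\xi$ and $P_{\H_\fa}1$, the technical step above, and $\Ann(Z^\fa)=\fa$ from \cite[Lemma 2.10]{CT2018}. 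Finite-dimensionality then forces $X$ to be a bounded bijection with bounded inverse, and the intertwining identity $XT_j = Z^\fa_j X$ is checked on the spanning family $\{p(T)\xi\}$ via $X(T_j p(T)\xi) = P_{\H_\fa}(x_j p) = Z^\fa_j(P_{\H_\fa} p)$, once more by $M_{x_j}$-invariance of $[\fa H^2_d]$.
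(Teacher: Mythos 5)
Your proof is correct and follows essentially the same route as the paper's: establish finite-dimensionality of both $\fH$ and $\H_\fa$, define the intertwiner $X\colon p(T)\xi\mapsto p(Z^\fa)P_{\H_\fa}1=P_{\H_\fa}p$, and reduce well-definedness and injectivity to the identity $\{q\in\bC[x_1,\ldots,x_d]:q(Z^\fa)=0\}=\fa_0$. The only difference is in how this last identity is obtained: the paper dispatches it with a one-line appeal to Theorem~\ref{T:GleasonsTrick}, while you route it through germs at $z$ and Lemma~\ref{L:PIofPI}; both are valid and rest on the same inclusion $\fm_z^{\kappa+1}\subset\fa_0$. One small point you should make explicit: your chain of equivalences invokes $\Ann(Z^\fa)=\fa$ via \cite[Lemma 2.10]{CT2018}, which presumes that $\fa$ is weak-$*$ closed. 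That is in fact automatic here, since by Theorem~\ref{T:GleasonsTrick} the ideal $\fa$ contains the weak-$*$ closed ideal $\fv_\kappa(\{z\})=\ip{\fm_z^{\kappa+1}}_{\M_d}$ of finite codimension and so is itself weak-$*$ closed, but it deserves a remark; alternatively, Lemma~\ref{L:PIClosures} (whose hypothesis $\widetilde{\fm_z}^{\kappa+1}\subset\ip{[f]_z:f\in\fa}$ you have already verified) lets you run the germ argument directly on $\overline{\fa}^{w*}$ and sidestep the closedness question entirely.
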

\begin{proof}
Assume that $N$ acts on the Hilbert space $\fH$. Because $zI+N$ is cyclic and $N$ is nilpotent, it follows that $\frk{H}$ and $\H_\fa$ are finite dimensional. If $\xi\in \fH$ is a cyclic vector for $zI+N$ then
 \[
 \fH=\{p(zI+N)\xi:p\in\CC[x_1,\cdots,x_d]\}
 \]
while
  \[
  \H_\fa=\{p(Z^{\fa})1:p\in\CC[x_1,\cdots,x_d]\}.
  \]
Let $q$ be a polynomial. Then, we have that $q(zI+N)\xi=0$ if and only if
\[
q(zI+N)p(zI+N)\xi=0, \quad p\in\CC[x_1,\cdots,x_d].
\]
Therefore, $q(zI+N)\xi=0$ if and only if $q\in \fa_0$. Likewise,   $q(Z^\fa)P_{\H_\fa}1=0$ if and only if $q(Z^\fa)=0$, which is in turn equivalent to $q\in \fa_0$ via an application of Theorem \ref{T:GleasonsTrick}. We conclude that
\[
\dim \fH=\dim \H_\fa=\dim (\bC[x_1,\ldots,x_d]/\fa_0).
\]
Furthermore, the linear map $X:\fH\to \H_\fa$ defined as
\[
X(p(zI+N)\xi)=p(Z^\fa)P_{\H_\fa}1, \quad p\in \bC[x_1,\ldots,x_d]
\]
is well-defined and injective, and thus necessarily invertible. It is readily verified that $X(zI+N)=Z^\fa X$.
\end{proof}

Before stating the quasi-similarity theorem we are after, we record another well-known fact.

\begin{lemma}
	For each positive integer $n$, let $S^{(n)}$ and $T^{(n)}$ be two similar $d$-tuples of operators. Then, the $d$-tuples
	$	\bigoplus_{n=1}^\infty S^{(n)}$ and $\bigoplus_{n=1}^\infty T^{(n)}$
	are quasi-similar.
	\label{L:SimSeqQSim}
\end{lemma}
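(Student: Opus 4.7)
The plan is to construct explicitly the quasi-similarities by taking a direct sum of the given similarities, after rescaling to ensure that the resulting operators are bounded.

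For each $n$, similarity of $S^{(n)}$ and $T^{(n)}$ provides a bounded invertible operator $X_n:\frk{H}_n\to\frk{K}_n$ (where $\frk{H}_n$ and $\frk{K}_n$ are the Hilbert spaces on which $S^{(n)}$ and $T^{(n)}$ respectively act) with the property that $X_n S^{(n)}_k=T^{(n)}_k X_n$ for every $1\leq k\leq d$. A priori, the norms $\|X_n\|$ and $\|X_n^{-1}\|$ need not be uniformly bounded, so the naive direct sum $\bigoplus_n X_n$ may fail to define a bounded operator. To circumvent this, I would rescale: set $c_n=1/\|X_n\|$ and $d_n=1/\|X_n^{-1}\|$, and define
\[
X=\bigoplus_{n=1}^\infty c_n X_n :\bigoplus_{n=1}^\infty\frk{H}_n\to\bigoplus_{n=1}^\infty\frk{K}_n, \qquad Y=\bigoplus_{n=1}^\infty d_n X_n^{-1}:\bigoplus_{n=1}^\infty\frk{K}_n\to\bigoplus_{n=1}^\infty\frk{H}_n.
\]
Since $\|c_n X_n\|=1$ and $\|d_n X_n^{-1}\|=1$ for every $n$, both $X$ and $Y$ are bounded linear operators of norm $1$. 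The intertwining relation for each individual $X_n$ is preserved under scalar multiplication, so coordinate-wise one obtains
\[
X\left(\bigoplus_{n=1}^\infty S^{(n)}_k\right)=\left(\bigoplus_{n=1}^\infty T^{(n)}_k\right)X, \qquad Y\left(\bigoplus_{n=1}^\infty T^{(n)}_k\right)=\left(\bigoplus_{n=1}^\infty S^{(n)}_k\right)Y
\]
for $1\leq k\leq d$.

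It remains to verify that $X$ and $Y$ are injective with dense range. Injectivity is immediate: each $c_n X_n$ (resp.\ $d_n X_n^{-1}$) is invertible as an operator between $\frk{H}_n$ and $\frk{K}_n$, so if $X\xi=0$ for $\xi=(\xi_n)_n$, each component $\xi_n$ must vanish. For density of the range of $X$, I would observe that for any vector $\eta=(\eta_n)_n$ in $\bigoplus_n \frk{K}_n$ with only finitely many non-zero components, the vector $\xi=(\xi_n)_n$ defined by $\xi_n=(c_n X_n)^{-1}\eta_n$ on the non-zero components and $\xi_n=0$ elsewhere lies in $\bigoplus_n \frk{H}_n$ and satisfies $X\xi=\eta$. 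Since vectors with finitely many non-zero components are dense in the direct sum, $\ran X$ is dense. The argument for $Y$ is identical.

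There is no substantial obstacle here; the only subtlety is the rescaling step, which is essential because the individual similarities need not come with any uniform norm control.
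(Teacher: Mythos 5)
Your proof is correct and matches the paper's argument essentially verbatim: both rescale the similarities $X_n$ by $1/\|X_n\|$ and $X_n^{-1}$ by $1/\|X_n^{-1}\|$ before taking the direct sum. You simply spell out the injectivity and dense-range verifications that the paper leaves as "readily verified."
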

\begin{proof}
	By assumption, for each positive integer $n$ there is an invertible operator $X_n$ with the property that $X_n S_n=T_n X_n$. It is then readily verified that the operators
	\[
	Y=\bigoplus_{n=1}^\infty \frac{1}{\|X_n\|}X_n \qand Z=\bigoplus_{n=1}^\infty \frac{1}{\|X^{-1}_n\|}X^{-1}_n
	\]
	are bounded, injective and they have dense ranges. Moreover, 
	\[
	Y\left(\bigoplus_{n=1}^\infty S^{(n)} \right)= \left(\bigoplus_{n=1}^\infty T^{(n)} \right)Y \qand Z\left(\bigoplus_{n=1}^\infty T^{(n)} \right)= \left(\bigoplus_{n=1}^\infty S^{(n)} \right)Z. \qedhere
	\]
\end{proof}

We can now prove the main result of this section, which is an application of Theorem \ref{T:Decomposition}. 

\begin{theorem}
	Let $S=(S_1,\cdots,S_d)$ and $T=(T_1,\cdots,T_d)$ be AC commuting row contractions which are both cyclic.
	Let $\Lambda\subset \bB_d$ be an interpolating sequence and let $\kappa$ be a non-negative integer.
	Assume that 
	\[
	\fv_\kappa(\Lambda)\subset \Ann(S)=\Ann(T).
	\]
	Then, $S$ is quasi-similar to $T$.
	\label{T:QSim}
\end{theorem}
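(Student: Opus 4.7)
The plan is to apply Theorem~\ref{T:Decomposition} to each of $S$ and $T$ separately and then match the resulting summands, exploiting the hypothesis that $\Ann(S)=\Ann(T)$. Writing $\fa=\Ann(S)=\Ann(T)$, we have $\Z_{\bB_d}(\Ann S)=\Z_{\bB_d}(\Ann T)$ and $\fp(\Ann(S),z)=\fp(\Ann(T),z)$ at every $z$ in this common zero set. The theorem then furnishes commuting nilpotent $d$-tuples $N^{(z)}_S$ and $N^{(z)}_T$, acting on subspaces $\fK_z^S$ and $\fK_z^T$ respectively, together with bounded invertibilities
\[
S\sim \bigoplus_{z}(zI+N^{(z)}_S)\qand T\sim \bigoplus_{z}(zI+N^{(z)}_T),
\]
where in each case $\Ann(zI+N^{(z)}_\bullet)$ is the weak-$*$ closure in $\M_d$ of the ideal generated by $\fp(\fa,z)$.

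The first step to settle is that each summand is cyclic. Recall from the proof of Theorem~\ref{T:Decomposition} that $\fK_z^S=\ran\theta_{\{z\}}(S)$ for a multiplier $\theta_{\{z\}}\in\fv_\kappa(\Lambda\setminus\{z\})$, and that $\theta_{\{z\}}(S)$ is an idempotent lying in the (bi)commutant of $S$. If $\xi\in\fH_S$ is a cyclic vector for $S$, then $\xi_z:=\theta_{\{z\}}(S)\xi\in\fK_z^S$, and for any $p\in\CC[x_1,\dots,x_d]$ we have $p(S)\xi_z=\theta_{\{z\}}(S)p(S)\xi\in\fK_z^S$, so $\xi_z$ is cyclic for $S|_{\fK_z^S}$. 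Thus $zI+N^{(z)}_S$ is a cyclic AC commuting row contraction, and in particular acts on a finite-dimensional space since $N^{(z)}_S$ is nilpotent. The same reasoning applies to $T$.

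Next, I will identify the polynomial annihilators of the summands. Since $zI+N^{(z)}_S$ is nilpotent up to a scalar translate, its Taylor spectrum is $\{z\}$ and, by Lemmas~\ref{L:PIClosures}, \ref{L:PIofPI} and \ref{L:GetPIs} applied to $\Ann(zI+N^{(z)}_S)$, the ideal of polynomials annihilating $zI+N^{(z)}_S$ is exactly $\fp(\fa,z)$. The same holds for $zI+N^{(z)}_T$ with identical polynomial ideal. Hence Lemma~\ref{L:NilSimModel} gives that both summands are similar to $Z^{\fb_z}$, where $\fb_z\subset\M_d$ is the ideal generated by $\fp(\fa,z)$. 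Composing similarities, $zI+N^{(z)}_S$ is similar to $zI+N^{(z)}_T$ for each $z$.

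Finally, Lemma~\ref{L:SimSeqQSim} shows that $\bigoplus_z (zI+N^{(z)}_S)$ and $\bigoplus_z (zI+N^{(z)}_T)$ are quasi-similar. Since similarity implies quasi-similarity and quasi-similarity is easily seen to be transitive (compositions of injective bounded maps with dense range retain both properties), combining with the outer similarities coming from Theorem~\ref{T:Decomposition} yields that $S$ and $T$ are quasi-similar. The only place where one must be careful is in verifying that the summand-level annihilating ideal data really is determined by $\fp(\fa,z)$ alone, independently of whether we started with $S$ or $T$; this is where the earlier germ/polynomial ideal lemmas do the real work, and is the main subtlety of the argument rather than a serious obstacle.
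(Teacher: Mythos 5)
Your proof is correct and follows essentially the same route as the paper: decompose both $S$ and $T$ via Theorem~\ref{T:Decomposition}, observe that the summand-level annihilating data depends only on $\fp(\fa,z)$ with $\fa=\Ann(S)=\Ann(T)$, use Lemma~\ref{L:NilSimModel} to get similarity of corresponding summands, and apply Lemma~\ref{L:SimSeqQSim} to obtain quasi-similarity of the direct sums. The only cosmetic differences are that you obtain cyclicity of the summands by pushing the cyclic vector through the idempotent $\theta_{\{z\}}(\cdot)$ (whereas the paper projects onto a coordinate of the direct sum, which is equivalent), and you explicitly identify the polynomial annihilator of each summand with $\fp(\fa,z)$ via Lemmas~\ref{L:PIClosures}, \ref{L:PIofPI} and \ref{L:GetPIs}, while the paper simply notes that equality of the multiplier-algebra annihilators of the summands forces equality of the polynomial annihilators -- a slightly lighter observation that suffices.
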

\begin{proof}
Put 
	\[
	\Lambda_0=\Z_{\bB_d}(\Ann(S))=\Z_{\bB_d}(\Ann(T)).
	\]
By Theorem \ref{T:Decomposition}, for each $z\in \Lambda_0$ there are commuting nilpotent $d$-tuples $A^{(z)}$ and $B^{(z)}$ with the property that $S$ and $T$ are similar to
	\[
	\bigoplus_{z\in \Lambda_0} (zI+A^{(z)}) \qand 	\bigoplus_{z\in \Lambda_0} (zI+B^{(z)}) 
	\]
	respectively.
	Moreover, for every $z\in\Lambda_0$, the tuples $zI+A^{(z)}$ and $zI+B^{(z)}$ are AC commuting row contractions with
	\[
	\Ann(zI+A^{(z)})=\ol{\fp(\Ann(S),z)}^{w*}, \quad \Ann(zI+B^{(z)})=\ol{\fp(\Ann(T),z)}^{w*}.
	\]
	We conclude that $\Ann(zI+A^{(z)})= \Ann(zI+B^{(z)})$.
	In particular, if we denote the ideals of polynomials annihilating $zI+A^{(z)}$ and $zI+B^{(z)}$ by $\fa_z$ and $\fb_z$ respectively, then $\fa_z=\fb_z$ for every $z\in \Lambda$. 
	
	Next, projecting any cyclic vector of $\bigoplus_{z\in \Lambda_0} (zI+A^{(z)})$ onto the appropriate component yields a cyclic vector for each $d$-tuple $zI+A^{(z)}, z\in \Lambda_0$.
	Likewise, the $d$-tuple $zI+B^{(z}$ is cyclic for every $z\in \Lambda_0$.
	We may thus invoke Lemma \ref{L:NilSimModel} to see that $zI+A^{(z)}$ and $zI+B^{(z)}$ are similar for every $z\in \Lambda_0$; indeed, they are both similar to $Z^{\fa_z}=Z^{\fb_z}$.
	Finally, an application of Lemma \ref{L:SimSeqQSim} shows that $\bigoplus_{z\in \Lambda_0} (zI+A^{(z)})$ is quasi-similar to $\bigoplus_{z\in \Lambda_0} (zI+B^{(z)}) $, whence $S$ is quasi-similar to $T$.
\end{proof}

It is easily verified that if two AC commuting row contractions $S$ and $T$ are quasi-similar, then $\Ann(S)=\Ann(T)$ (see for instance \cite[Lemma 2.12 ]{CT2018}).
Furthermore, we mention that in the univariate situation, the previous theorem holds without any restriction on the annihilating ideals \cite[Theorem 2.3]{bercovici1988}.
A multivariate version of this single variable theorem can be found in \cite[Corollary 3.7]{CT2018}.
It should be noted however that at present, \cite[Corollary 3.7]{CT2018} only yields a certain one-sided version of quasi-similarity.
The appeal of Theorem \ref{T:QSim} is precisely that it fixes this shortcoming, at the cost of being more restrictive in its assumptions.

As a byproduct of the ongoing discussion, we remark that higher order vanishing ideals of a given interpolating sequence $\Lambda$ are determined by their polynomial ideals, in the following precise sense.

\begin{corollary} 
	Let $\Lambda\subset \bB_d$ be an interpolating sequence.
	Let $\frk{a}$ and $\frk{b}$ be weak-$*$ closed ideals in $\mc{M}_d$ both containing $\frk{v}_\kappa(\Lambda)$ for some non-negative integer $\kappa$, and suppose that both ideals are contained in $\frk{v}_0(\Lambda)$.
	If $\PI(\frk{a},z)=\PI(\frk{b},z)$ for every $z\in\Lambda$, then $\frk{a}=\frk{b}$.
	\label{C:SamePISameI}
\end{corollary}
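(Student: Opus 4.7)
The strategy is to follow the proof of Theorem \ref{T:QSim}, replacing the hypothesis $\Ann(S)=\Ann(T)$ by the pointwise matching $\PI(\fa,z)=\PI(\fb,z)$, and then to invoke the general fact that quasi-similar AC commuting row contractions must have equal annihilating ideals.

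First, the double inclusion $\fv_\kappa(\Lambda)\subset\fa\subset\fv_0(\Lambda)$ combined with Theorem \ref{T:ISDim0} forces $\ZBd(\fa)=\Lambda=\ZBd(\fb)$. Consider the model row contractions $Z^\fa$ and $Z^\fb$. Both are cyclic AC commuting row contractions, and by \cite[Lemma 2.10]{CT2018} they satisfy $\Ann(Z^\fa)=\fa$ and $\Ann(Z^\fb)=\fb$. Apply Theorem \ref{T:Decomposition} to each of them to produce commuting nilpotent $d$-tuples $A^{(z)}$ and $B^{(z)}$, indexed by $z\in\Lambda$, such that $Z^\fa$ is similar to $\bigoplus_{z\in\Lambda}(zI+A^{(z)})$ and $Z^\fb$ is similar to $\bigoplus_{z\in\Lambda}(zI+B^{(z)})$, where $\Ann(zI+A^{(z)})$ is generated by $\PI(\fa,z)$ and $\Ann(zI+B^{(z)})$ is generated by $\PI(\fb,z)$. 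By hypothesis, these two annihilating ideals coincide for every $z\in\Lambda$.

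Cyclicity descends to each summand: if $\xi=(\xi_z)_z$ is a cyclic vector for the direct sum, then its polynomial orbit is dense, and the continuous projection of that orbit onto the $z_0$-component is $\{p(z_0I+A^{(z_0)})\xi_{z_0}:p\in\CC[x_1,\ldots,x_d]\}$, which is therefore dense in that component. Hence each $zI+A^{(z)}$ is cyclic, and likewise each $zI+B^{(z)}$. Lemma \ref{L:NilSimModel} then shows that $zI+A^{(z)}$ and $zI+B^{(z)}$ are both similar to the common model $Z^{\fc_z}$, where $\fc_z\subset\M_d$ is the weak-$*$ closed ideal generated by $\PI(\fa,z)=\PI(\fb,z)$; here one uses Lemma \ref{L:GleasonCondLemma}(i) to identify the polynomial annihilator of $zI+A^{(z)}$ with $\PI(\fa,z)$, which is the input Lemma \ref{L:NilSimModel} requires. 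In particular, $zI+A^{(z)}$ and $zI+B^{(z)}$ are similar for every $z\in\Lambda$.

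Finally, Lemma \ref{L:SimSeqQSim} yields that the two direct sums are quasi-similar, and hence so are $Z^\fa$ and $Z^\fb$. The standard fact (noted immediately after Theorem \ref{T:QSim} and contained in \cite[Lemma 2.12]{CT2018}) that quasi-similar AC commuting row contractions share the same annihilating ideal then delivers $\fa=\Ann(Z^\fa)=\Ann(Z^\fb)=\fb$. The main point requiring care is the identification of the polynomial annihilator of each nilpotent summand with $\PI(\fa,z)$, so that Lemma \ref{L:NilSimModel} can be applied; the rest of the argument is a direct concatenation of the decomposition and quasi-similarity machinery already in place.
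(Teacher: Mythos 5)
Your argument is correct and follows essentially the same route as the paper: apply Theorem \ref{T:Decomposition} to $Z^\fa$ and $Z^\fb$, match the local nilpotent blocks via Lemma \ref{L:NilSimModel}, pass to quasi-similarity with Lemma \ref{L:SimSeqQSim}, and conclude from equality of annihilating ideals. One small citation slip: to identify the polynomial annihilator of $zI+A^{(z)}$ with $\PI(\fa,z)$ you should point to the conclusion of Theorem \ref{T:Decomposition} itself (or to Lemma \ref{L:GleasonCondLemma}(iii)), which states that $\Ann(zI+A^{(z)})$ is the weak-$*$ closure of $\PI(\fa,z)$; Lemma \ref{L:GleasonCondLemma}(i) only addresses the subspace $\fK_z$, not the annihilator. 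With that reference corrected the proof matches the paper's.
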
 
\begin{proof}
	Let $\frk{p}_z=\PI(\frk{a},z)=\PI(\frk{b},z)$ for $z\in\Lambda$. Put $\Lambda_0=\Z_{\bB_d}(\Ann(T))\subset \Lambda$.
	By Theorem \ref{T:Decomposition}, $Z^{\frk{a}}$ is similar to $\bigoplus_{z\in\Lambda_0} (zI+N^{(z)})$ and  $Z^{(\frk{b})}$ is similar to $\bigoplus_{z\in\Lambda_0}(zI+R^{(z)})$
	for some nilpotent $d$-tuples $N^{(z)}$ and $R^{(z)}$.
	These $d$-tuples satisfy
	\[ \Ann(zI+N^{(z)})=\overline{\frk{p}_z}^{w*}=\Ann(zI+R^{(z)}), \]
	and both $zI+N^{(z)}$ and $zI+R^{(z)}$ are cyclic since $Z^{\fa}$ and $Z^{\fb}$ are.
	Therefore $zI+N^{(z)}$ and $zI+R^{(z)}$ are similar for each $z\in \Lambda_0$ by Lemma \ref{L:NilSimModel}.
	We conclude from Lemma \ref{L:SimSeqQSim} that $Z^{\frk{a}}$ is quasi-similar to $Z^{\frk{b}}$, whence
	\[ \frk{a}=\Ann(Z^{\frk{a}})=\Ann(Z^{\frk{b}})=\frk{b}. \qedhere \]
\end{proof}

Naturally, one may now wonder whether Theorem \ref{T:QSim} can be improved to produce similarity between the row contractions.
For vanishing ideals of order zero, this is indeed the case. 

\begin{theorem}\label{T:simorder0}
	Let $S=(S_1,\cdots,S_d)$ and $T=(T_1,\cdots,T_d)$ be AC commuting row contractions which are both cyclic. Let $\Lambda\subset \bB_d$ be an interpolating sequence and  assume that
	\[
	\fv_0(\Lambda)\subset \Ann(S)=\Ann(T).
	\]
	Then, $S$ is similar to $T$.
\end{theorem}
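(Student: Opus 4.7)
The plan is to reduce immediately to Corollary \ref{C:decompkappa0}, which is essentially the $\kappa = 0$ version of the main Jordan-type decomposition and which already contains the essential content. Since the hypothesis $\fv_0(\Lambda) \subset \Ann(S) = \Ann(T)$ is exactly what Corollary \ref{C:decompkappa0} requires, and since both $S$ and $T$ are assumed to be cyclic, the corollary applies to both row contractions simultaneously.

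Concretely, I would set $\Lambda_0 = \Z_{\bB_d}(\Ann(S)) = \Z_{\bB_d}(\Ann(T))$, using the assumption that the annihilating ideals coincide. Applying Corollary \ref{C:decompkappa0} to the cyclic AC row contraction $S$ yields an invertible operator implementing a similarity
\[
S \sim \bigoplus_{z \in \Lambda_0} z.
\]
Applying the same corollary to $T$ gives
\[
T \sim \bigoplus_{z \in \Lambda_0} z.
\]
Since similarity is a transitive equivalence relation on $d$-tuples of operators, composing the two similarities (one with the inverse of the other) produces an invertible operator intertwining $S$ and $T$.

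There is no real obstacle here: the work has already been done in establishing Corollary \ref{C:decompkappa0}, and the statement just records that when the annihilating ideal is a first-order vanishing ideal, the nilpotent pieces $N^{(z)}$ in the general decomposition must vanish, leaving a scalar diagonal model that depends only on $\Lambda_0$. Consequently the model is a complete invariant for similarity in this restricted class, and the theorem is an immediate consequence.
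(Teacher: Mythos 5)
Your proposal is correct and matches the paper's proof, which simply records that the result is an immediate consequence of Corollary \ref{C:decompkappa0}: apply it to both $S$ and $T$ (using cyclicity to land on $\bigoplus_{z\in\Lambda_0}z$) and compose the two similarities. Your spelled-out version is exactly what the paper leaves implicit.
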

\begin{proof}
This is an immediate consequence of Corollary \ref{C:decompkappa0}.
\end{proof}

For higher order vanishing ideals however, similarity cannot be achieved in general, even in the single variable setting. The following example illustrates this fact, and incidentally also shows that the closed range assumption found in \cite[Theorem 5.7]{clouatreSIM1} cannot simply be removed.

\begin{example}\label{E:SimTroubled=1}
Let $\Lambda=\{\lambda_n:n\in \bN\}$ be an infinite interpolating sequence in $\bB_1$. For each positive integer $n\geq 1$, let $0<\eps_n<1$ and consider 
\[
S_n=\begin{bmatrix} \lambda_n & 1-|\lambda_n|\\
0 & \lambda_n \end{bmatrix}, \quad T_n=\begin{bmatrix} \lambda_n & \eps_n(1-|\lambda_n|)\\
0 & \lambda_n \end{bmatrix}.
\]
Then, it is readily verified that $S_n$ and $T_n$ are AC contractions acting on $\bC^2$, with 
\[
\Ann(S_n)=\Ann(T_n)=\fv_1(\{\lambda_n\})
\]
and such that $\xi=(0,1)\in \bC^2$ is a cyclic vector.
If we let $\fH=\bigoplus_{n=1}^\infty \bC^2$, then $S=\bigoplus_{n=1}^\infty S_n$ and $T=\bigoplus_{n=1}^\infty T_n$ are AC contractions on $\fH$ with
\[
\Ann(S)=\Ann(T)=\fv_1(\Lambda).
\]
We now claim that $S$ and $T$ are cyclic. To see this, invoke Lemma \ref{L:GetIdems} to find, for each positive integer $n$, a multiplier $\theta_n\in\fv_1(\Lambda\setminus \{z_n\})$ such that $1-\theta_n\in \fv_1(\{z_n\})$. Thus, we find
\[
\theta_n(S_n)=\theta_n(T_n)=I
\]
while 
\[
\theta_m(S_n)=\theta_m(T_n)=0
\]
whenever $m\neq n$. Put 
\[
\Xi=\bigoplus_{n=1}^\infty 2^{-j}\xi\in \fH.
\]
If $p$ is a polynomial and $n\geq 1$, then we see that
\[
(p\theta_n)(S)\Xi=\frac{1}{2^n}p(S_n)\xi \qand (p\theta_n)(T)\Xi=\frac{1}{2^n}p(T_n)\xi.
\]
Using that $\xi$ is cyclic for $S_n$ and $T_n$ for every $n\geq 1$, we infer that $\Xi$ is cyclic for $S$ and $T$. Thus, $S$ and $T$ are quasi-similar by Theorem \ref{T:QSim}. 

Suppose that there is an invertible $X\in B(\frk{H})$ such that $XT=SX$. In particular, for every $n\geq 1$ we have $X\theta_n(T)=\theta_n(S)X$. But $\theta_n(S)=\theta_n(T)$ for every $n\geq 1$, and the collection $\{\theta_n(T)\}_{n=1}^\infty$ consists of pairwise orthogonal projections summing to $I$, so we see that
\[
X=\bigoplus_{n=1}^\infty X_n
\] 
where $X_n=X|_{\ran\theta_n(T)}$ for every $n\geq 1$. We conclude that
\[
X_n T_n=S_n X_n
\]
for every $n\geq 1$.  A routine calculation reveals that this forces $X_n$ to be of the form
	\[ \begin{bmatrix} a_n & b_n  \\ 0 & \eps_n a_n\end{bmatrix} \]
	for some complex numbers $a_n,b_n$. Since $X_n$ is invertible, we see that $a_n\neq 0$. Furthermore,
	\[
	\|X\| \|X^{-1}\| \geq \|X_n \| \|X_n^{-1}\|\geq \frac{|a_n|}{\eps_n |a_n|}=1/\eps_n.
	\]
Thus, if we choose the sequence $(\eps_n)$ to tend to zero, then $X$ cannot be bounded.	
\qed
\end{example}

Examples of this type can also be manufactured in several variables.
Although the argument is not much different, we provide the details so as to show how to construct AC commuting row contractions with certain prescribed annihilating ideals.

First we record a few technical facts relating to automorphisms of the ball that may be of independent interest.

\begin{lemma}\label{L:auto}
Let $T=(T_1,\ldots,T_d)$ be an AC commuting row contraction with cyclic vector $\xi$ and such that $\Ann(T)=\fv_1(\{0\})$. Let $z\in \bB_d$ and let $\Gamma:\bB_d\to\bB_d$ be an automorphism such that $\Gamma(0)=z$. Then, $\Gamma(T)$ is an AC commuting row contraction with cyclic vector $\xi$ and such that $\Ann(\Gamma(T))=\fv_1(\{z\})$.
\end{lemma}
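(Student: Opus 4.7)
The plan is to prove each of the three assertions (row contraction, AC, and annihilating ideal) by systematically transferring information from $T$ via the weak-$*$ continuous algebra automorphism $\phi\mapsto \phi\circ \Gamma$ of $\M_d$. Write $\Gamma=(\gamma_1,\dots,\gamma_d)$ and $\Gamma^{-1}=(\eta_1,\dots,\eta_d)$, so $\gamma_k,\eta_k\in\A_d$ by the discussion in Subsection \ref{SS:DA}. The key structural fact used throughout is that, via the unitary $U_\Gamma\in B(H^2_d)$, conjugation sends $M_\phi$ to $M_{\phi\circ\Gamma}$, which ensures in particular that $(\gamma_1,\dots,\gamma_d)$ is a row contraction in $\A_d^{(d)}$ and that the algebra endomorphism $\phi\mapsto\phi\circ\Gamma$ is weak-$*$ continuous on $\M_d$.

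First I would check that $\Gamma(T)$ is an AC commuting row contraction. The commutativity is immediate since $\gamma_1,\dots,\gamma_d$ commute in $\M_d$. By complete contractivity of $\alpha_T$ on $\A_d$ applied to the row contraction $(\gamma_1,\dots,\gamma_d)$, the tuple $\Gamma(T)=(\gamma_1(T),\dots,\gamma_d(T))$ is a row contraction. Define $\beta\colon \M_d\to B(\frk{H})$ by $\beta(\phi)=\widehat{\alpha_T}(\phi\circ\Gamma)$. Since $\phi\mapsto\phi\circ\Gamma$ is a weak-$*$ continuous unital endomorphism of $\M_d$ and $\widehat{\alpha_T}$ is a weak-$*$ continuous unital homomorphism, so is $\beta$. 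Further, $\beta(x_k)=\widehat{\alpha_T}(\gamma_k)=\gamma_k(T)$, so $\beta$ extends the polynomial functional calculus of $\Gamma(T)$. This witnesses that $\Gamma(T)$ is AC with $\widehat{\alpha_{\Gamma(T)}}=\beta$.

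Next I would compute the annihilating ideal. By construction,
\[
\Ann(\Gamma(T))=\{\phi\in\M_d: \phi\circ\Gamma\in\Ann(T)\}=\{\phi\in\M_d: \phi\circ\Gamma\in\frk{v}_1(\{0\})\}.
\]
A function $\psi\in\M_d$ lies in $\frk{v}_1(\{0\})$ precisely when $\psi(0)=0$ and $(\partial\psi/\partial x_k)(0)=0$ for all $k$. Applied to $\psi=\phi\circ\Gamma$, the first condition reads $\phi(z)=0$ since $\Gamma(0)=z$, while by the chain rule the second reads
\[
0=\frac{\pd(\phi\circ\Gamma)}{\pd x_k}(0)=\sum_{j=1}^d \frac{\pd\phi}{\pd x_j}(z)\frac{\pd\gamma_j}{\pd x_k}(0),\qquad k=1,\dots,d.
\]
Because $\Gamma$ is a biholomorphism, its complex Jacobian at $0$ is invertible, so this system forces $(\pd\phi/\pd x_j)(z)=0$ for every $j$. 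Thus $\Ann(\Gamma(T))=\frk{v}_1(\{z\})$.

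Finally I would verify that $\xi$ remains cyclic. Applying the functional calculus $\alpha_T$ to the identity $x_k=\eta_k\circ\Gamma$ gives $T_k=\eta_k(\Gamma(T))$ for each $k$. Since $\eta_k\in\A_d$, we can approximate $\eta_k$ in $\A_d$-norm by polynomials $p_n^{(k)}\in\bC[x_1,\dots,x_d]$, and then $p_n^{(k)}(\Gamma(T))\to T_k$ in operator norm by continuity of the $\A_d$-functional calculus of $\Gamma(T)$. Hence each $T_k$ lies in the norm closure of the unital algebra $\A_{\Gamma(T)}$ generated by $\Gamma(T)$, so $\A_T$ is contained in that closure, and therefore
\[
[\A_{\Gamma(T)}\xi]\supset [\A_T\xi]=\frk{H}.
\]
The main subtlety is really the chain-rule/Jacobian step of the second paragraph; all the operator-theoretic transfers reduce quite cleanly to the weak-$*$ continuity of $\phi\mapsto\phi\circ\Gamma$ and to the density of polynomials in $\A_d$.
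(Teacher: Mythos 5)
Your proof is correct and follows essentially the same route as the paper: row contractivity from complete contractivity of the $\A_d$ functional calculus applied to $(\gamma_1,\dots,\gamma_d)$; annihilating ideal via the chain rule and invertibility of the complex Jacobian of $\Gamma$ at $0$; cyclicity via $T_k=\eta_k(\Gamma(T))$ and norm approximation of $\eta_k\in\A_d$ by polynomials. The only stylistic difference is the AC step: you directly exhibit the weak-$*$ continuous extension $\beta=\widehat{\alpha_T}\circ(\,\cdot\,\circ\Gamma)$ using that $\phi\mapsto\phi\circ\Gamma$ is implemented by conjugation with the unitary $U_\Gamma$ and hence weak-$*$ continuous, while the paper instead invokes the characterization of absolute continuity in terms of bounded sequences in $\A_d$ converging pointwise to $0$; both are valid and equivalent in substance.
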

\begin{proof}
As noted in Subsection \ref{SS:DA}, the components of $\Gamma$ lie in $\A_d$ and they form a commuting row contraction on $H^2_d$. Hence, because the $\A_d$ functional calculus is completely contractive, we see that $\Gamma(T)$ is a commuting row contraction. We note that if $(\phi_n)$ is a bounded sequence in $\A_d$ converging pointwise to $0$ on $\bB_d$, then the sequence $(\phi_n\circ \Gamma)$ has the same properties. This shows that $\Gamma(T)$ is AC if and only if $T$ is. Next, we have
\begin{align*}
	\Ann(\Gamma(T))& =\{\psi\in \M_d: \psi \circ \Gamma\in \Ann(T)\}\\
	&=\{\psi\in \M_d: \psi \circ \Gamma\in \fv_1(\{0\})\}.
	\end{align*}
	But $\Gamma(0)=z$ and $\Gamma'(0)$ is invertible since $\Gamma$ is an automorphism, so that $\psi\circ\Gamma\in \fv_1(\{0\})$ if and only if $\psi\in \fv_1(\{z\})$. We conclude that
	\[
	\Ann(\Gamma(T))=\fv_1(\{z\}).
	\]
	Finally, using that $\Gamma$ is invertible, we see that the norm closed unital algebra generated by $T_1,\ldots,T_d$ coincides with that generated by the components of $\Gamma(T)$. Therefore, $\xi$ is cyclic for $T$ if and only if it is cyclic for $\Gamma(T)$.
\end{proof}

We can now give a multivariate example showing that the conclusion of the Theorem \ref{T:QSim} cannot be improved to similarity  in general.

\begin{example}\label{E:SimTrouble}
	Define two bounded linear operators on $\bC^3$ by
	\[ N_1=\begin{bmatrix} 0 & 0 & 0 \\ 1 & 0 & 0 \\ 0 & 0 & 0 \end{bmatrix}, \quad N_2=\begin{bmatrix} 0 & 0 & 0 \\ 0 & 0 & 0 \\ 1 & 0 & 0 \end{bmatrix}. \]
	We note that 
	\[ N_1^2=N_1N_2=N_2N_1=N_2^2=0. \]
	It is readily verified that the commuting pair $N=(N_1,N_2)$ is an AC row contraction with $\Ann(N)$ generated by $\{x_1^2,x_1x_2,x_2^2\}$
	since $I,N_1,N_2$ are linearly independent. Therefore, we have $\Ann(N)=\fv_1(\{0\})$.
	For each $t>0$ we set
	\[ M_1(t)=N_1, \quad M_2(t)=N_1+t N_2 \]
	and observe that
	\[ M_1(t)M_1(t)^*+M_2(t)M_2(t)^*=\begin{bmatrix} 0 & 0 & 0 \\ 0 & 2 & t \\ 0 & t & t^2 \end{bmatrix}.\]
	If we put
	\[
	f(t)=\left\| \begin{bmatrix}  2 & t \\ t & t^2\end{bmatrix}\right\|^{1/2}=\sqrt{1+t^2/2+\sqrt{1+t^4/4}}
	\]
	then we see that
	\[ M_1(t)M_1(t)^*+M_2(t)M_2(t)^* \leq f(t)^2 I \]
	and consequently, setting
	\[
	R_1(t)=\frac{1}{f(t)}M_1(t), \quad R_2(t)=\frac{1}{f(t)}M_2(t)
	\]
	yields a commuting  row contraction $R(t)=(R_1(t),R_2(t))$.
	The pair $R(t)$ is nilpotent and hence AC.
	In fact, one readily checks that 
	\[
	R_1(t)^2=R_1(t)R_2(t)=R_2(t)^2=0
	\]
	and that $I, R_1(t),R_2(t)$ are linearly independent, so $\Ann(R(t))=\fv_1(\{0\})$ as above.
	We also note that both $N$ and $R(t)$ have $\xi=(1,0,0)$ as a cyclic vector.
Next, let $\Lambda=\{z_n:n\in \bN\}\subset \bB_2$ be an infinite interpolating sequence and let $(\eps_n)$ be a sequence of positive numbers converging to $0$. For each positive integer $n$, let $\Gamma_n:\bB_2\to\bB_2$ be an automorphism such that $\Gamma_n(0)=z_n$. Let
	\[ T=\bigoplus_{n=1}^\infty \Gamma_n(N) \qand S=\bigoplus_{n=1}^\infty \Gamma_n(R(\eps_n)), \]
	both acting on 
	\[
	\frk{H}=\bigoplus_{n=1}^\infty \CC^3.
	\]
	By Lemma \ref{L:auto}, for every $n\geq 1$ we see that $\Gamma_n(N)$ and $\Gamma_n(R(t))$ are AC commuting row contractions with cyclic vector $\xi$ and such that
	\[
	\Ann(\Gamma_n(N))=\Ann(\Gamma_n(R(\eps_n))=\fv_1(\{z_n\}).
	\]
	Thus, 
	$S$ and $T$ are AC commuting row contractions such that
 	
	\[
	\Ann(T)=\Ann(S)=\frk{v}_1(\grL).
	\]
Using Lemma \ref{L:GetIdems} and arguing exactly as in Example \ref{E:SimTroubled=1}, we see that $S$ and $T$ are cyclic, and thus, $S$ and $T$ are quasi-similar by Theorem \ref{T:QSim}. 	

Suppose that there is an invertible $X\in B(\frk{H})$ such that $XT=SX$. As in Example \ref{E:SimTroubled=1}, we see that
\[
X=\bigoplus_{n=1}^\infty X_n
\]
where
\[
X_n\Gamma_n(N)=\Gamma_n(R(\eps_n))X_n
\]
and in particular
\[
X_n N=R(\eps_n)X_n
\]
for every $n\geq 1$.  A routine calculation reveals that this forces $X_n$ to be of the form
	\[ \begin{bmatrix} a_n & 0 & 0 \\ b_n & a_nf(\eps_n)^{-1} & a_nf(\eps_n)^{-1} \\ c_n & 0 & a_n\eps_n(f(\eps_n))^{-1} \end{bmatrix} \]
	for some complex numbers $a_n,b_n,c_n$. Since $X_n$ is invertible, we see that $a_n\neq 0$.
	We compute that
	\[
	\det X_n=\frac{a_n^3\eps_n}{f(\eps_n)^2}
	\]
	whence
	\begin{align*}
	\|X\| \|X^{-1}\|&\geq \|X_n\| \|X_n^{-1}\|\geq |a_n| | \det(X_n^{-1})|^{1/3}\\
	&=\eps_n^{-1/3}f(\eps_n)^{2/3}.
	\end{align*}
	Finally, we note that
	\[
	\lim_{n\to\infty} f(\eps_n)=\sqrt{2}
	\]
	so that the previous inequality contradicts $X$ being boundedly invertible.
\qed
\end{example}

\section{Similarity of Nilpotent Tuples}\label{S:nilpotent}

Example \ref{E:SimTrouble} in the previous section showed that in general the conclusion of Theorem \ref{T:QSim} cannot be improved to similarity. Examining the construction in the example, we see that the technical difficulties boil down to obtaining \emph{norm-controlled} similarities between commuting nilpotent tuples. We investigate this question in this section. To begin, we analyze a concrete model for these tuples. We first collect some known facts in the following lemma.

\begin{lemma}\label{L:modelgauge}
	Let $\fa\subset \M_d$ be a proper ideal. Then, the following statements hold.
	\begin{enumerate}
	
	\item[\rm{(i) }] Assume that $\fa$ is generated by homogeneous polynomials. Then, for every $0\leq t\leq 2\pi$ there is a unitary operator $W_t\in B(\H_\fa)$ such that $W_t 1=1$ and
	$
		W_t Z^\fa W_t^*=e^{it}Z^{\fa}.
	$
	
		\item[\rm{(ii) }] Assume that $\fa$ is generated by monomials. Then, we have $x^\alpha\in \H_\fa$ and
	\[
		\frac{|\alpha|!}{\alpha!}\|(Z^\fa)^\alpha 1\|^2=1
	\]
	for every $\alpha\in \bN^d$ such that $x^\alpha\notin \fa$.	
	\end{enumerate}

\end{lemma}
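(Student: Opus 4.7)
For part (i), my plan is to realize $W_t$ as the restriction to $\H_\fa$ of a natural global unitary on $H^2_d$. Define $U_t : H^2_d \to H^2_d$ by $(U_t f)(x) = f(e^{it}x)$; acting as $U_t x^\alpha = e^{it|\alpha|} x^\alpha$ on the orthogonal monomial basis, this operator is unitary, and a direct calculation gives $U_t M_{x_k} U_t^* = e^{it} M_{x_k}$, and more generally $U_t M_\psi U_t^* = M_{\psi \circ R_{e^{it}}}$ where $R_\lambda(x) = \lambda x$. Since conjugation by $U_t$ is a weak-$*$ continuous algebra automorphism of $\M_d$ that scales each homogeneous polynomial by a phase, it preserves the weak-$*$ closed ideal $\fa$ generated by homogeneous polynomials. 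Consequently $U_t$ maps $[\fa H^2_d]$ into itself and, being unitary, preserves $\H_\fa$ as well. Setting $W_t = U_t|_{\H_\fa}$ gives a unitary with $W_t 1 = 1$, and compressing the relation $U_t M_{x_k} U_t^* = e^{it} M_{x_k}$ to the coinvariant subspace $\H_\fa$ yields $W_t Z^\fa_k W_t^* = e^{it} Z^\fa_k$ for $1 \leq k \leq d$.

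For part (ii), the plan is to work directly with the orthogonal monomial basis of $H^2_d$. Let $S = \{\alpha \in \NN^d : x^\alpha \in \fa\}$; because $\fa$ is generated by monomials, $S$ is upward closed under coordinate-wise addition, and a routine verification identifies $[\fa H^2_d]$ as the closed linear span of $\{x^\alpha : \alpha \in S\}$. Therefore $\H_\fa = \cc{\spn}\{x^\alpha : x^\alpha \notin \fa\}$, and in particular $x^\alpha \in \H_\fa$ whenever $x^\alpha \notin \fa$. Coinvariance of $\H_\fa$ for $M_x$ gives the standard identity $(Z^\fa)^\alpha = P_{\H_\fa} M_{x^\alpha}|_{\H_\fa}$, so $(Z^\fa)^\alpha 1 = P_{\H_\fa} x^\alpha = x^\alpha$, and the desired equality follows from the standard norm formula $\|x^\alpha\|^2_{H^2_d} = \alpha!/|\alpha|!$ in the Drury--Arveson space.

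The main obstacle I anticipate is the verification in part (i) that the weak-$*$ closed ideal generated by a family of homogeneous polynomials is invariant under the scaling automorphism $\psi \mapsto \psi \circ R_{e^{it}}$. This reduces first to checking the statement at the level of polynomial ideals, where each generator is simply multiplied by a phase, and then to passing to the weak-$*$ closure, which is legitimate because the automorphism is weak-$*$ continuous. Everything else amounts to elementary computations on the monomial basis of $H^2_d$ together with a compression to a coinvariant subspace.
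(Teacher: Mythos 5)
Your proposal is correct and follows essentially the same approach as the paper: part (i) uses the gauge unitary $U_t$ on $H^2_d$, checks that conjugation by $U_t$ preserves the homogeneously generated ideal $\fa$ and hence the coinvariant subspace $\H_\fa$, then restricts; part (ii) uses orthogonality of the monomial basis to see that $x^\alpha\perp[\fa H^2_d]$ whenever $x^\alpha\notin\fa$, followed by the norm formula for monomials. The one minor point the paper makes explicit that you elide is that $1\in\H_\fa$ (because $\fa$ is proper and homogeneously generated), which is needed for $W_t 1 = 1$ to even make sense.
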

\begin{proof}
	(i) Fix $0\leq t\leq 2\pi$. As explained in \cite[Section 3.5]{shalit2014}, there is a unitary operator $U_t\in B(H^2_d)$ such that
	\[
		(U_t f)(z)=f(e^{it}z), \quad z\in \bB_d
	\]
	for every $f\in H^2_d$. Moreover, we have that
	\[
		U_t M_{x_k}U_t^*=e^{it}M_{x_k}, \quad 1\leq k\leq d.
	\]
	Since $\fa$ is generated by homogeneous polynomials, we see that $U_t \fa U_t^*= \fa$. In particular, we obtain that $U_t \H_\fa=\H_\fa$. Hence, the operator 
	\[
	W_t=U_t|_{\H_\fa}:\H_{\fa}\to \H_{\fa}
	\]
	is unitary as well. Now, we note that $1\in \H_{\fa}$ since $\fa$ is proper and generated by homogeneous polynomials, and therefore
	\[
		W_t1=U_t1=1.
	\]
	We compute
	\begin{align*}
		W_t Z^\fa W_t^*&=U_t P_{\H_\fa}M_{x}P_{\H_\fa}U_t^*|_{\H_\fa}\\
		&=P_{\H_\fa}U_t M_{x}U_t^*|_{\H_\fa}\\
		&=e^{it}P_{\H_\fa} M_{x}|_{\H_\fa}=e^{it}Z^\fa.
	\end{align*}

	(ii) There is a subset $\F\subset \bN^d$ such that $\fa$ is generated by $\{x^\beta:\beta\in \F\}$.
	Let $\beta\in \F$.
	Since the monomials form an orthogonal basis for $H^2_d$, it is readily seen that $\langle x^\alpha, x^\beta f\rangle=0$ for all $f\in H^2_d$ unless there is $\gamma\in \bN^d$ such that $\alpha=\beta+\gamma$, which in turn implies that $x^\alpha\in \fa$.
	We conclude that $x^\alpha\in \H_\fa$ whenever $x^\alpha\notin \fa$.
	Thus, if $x^\alpha\notin \fa$ we find $(Z^\fa)^\alpha 1=x^\alpha$ and thus
	\[
		\frac{|\alpha|!}{\alpha!}\|(Z^\fa)^\alpha 1\|^2=1. \qedhere
	\]
\end{proof}

We note that property (ii) of the previous result fails without the condition that $\fa$ be generated by monomials. Indeed, let $\fa\subset \M_d$ be the weak-$*$ closed ideal generated by $x_1+x_2$. Then, we see that $x_1\notin \fa$, yet 
\[
	\langle x_1,x_1+x_2\rangle_{H^2_d}=1
\]
so that $x_1\notin \H_\fa$ and 
$
\|P_{\H_\fa} x_1\|<1.
$

Next, we show that Lemma \ref{L:modelgauge} imposes necessary conditions on an arbitrary commuting nilpotent tuple to be similar to the model.

\begin{theorem}\label{T:simnecessary}
	Let $N=(N_1,\ldots,N_d)$ be a nilpotent commuting $d$-tuple on some Hilbert space $\fH$. Let $\fa\subset \M_d$ be an ideal generated by monomials. Assume that  there is an invertible operator $X:\fH\to \H_{\fa}$ such that $XNX^{-1}=Z^\fa$.
	Then, the following statements hold.
	\begin{enumerate}
		\item[\rm{(i)}] There is a unit vector $\xi\in \fH$ which is cyclic for $N$.
		\item[\rm{(ii)}] For each $t\in [0,2\pi]$, there is an invertible operator $Y_t\in B(\fH)$ with 
		\[
		\|Y_t\|\leq \|X\| \|X^{-1}\|
		\]
		such that $Y_t^{-1}\xi=\xi$ and 
		$
			Y_t N Y_t^{-1}=e^{it}N.
		$
		\item[\rm{(iii)}] We have that
		\[
			\frac{|\alpha|!}{\alpha!} \|N^\alpha \xi\|^2\geq \frac{1}{\|X^{-1}\|\|X\|}
		\]
	for every $\alpha\in \bN^d$ such that $x^\alpha\notin \fa$.
\end{enumerate} 
\end{theorem}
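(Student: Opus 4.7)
The three statements will all follow from a single construction. Since $\fa$ is a proper ideal generated by monomials (in particular by homogeneous polynomials), we have $1 \in \H_\fa$, and as noted right after the definition of $Z^\fa$ in the excerpt, the constant function $1$ is a cyclic vector for $Z^\fa$. The natural move is to pull this vector back through $X^{-1}$ and normalize, setting $\xi = X^{-1}1/\|X^{-1}1\|$. Because $X$ is invertible and $X N_j = Z^\fa_j X$ for each $j$, we have $N^\alpha = X^{-1}(Z^\fa)^\alpha X$ for every $\alpha\in\bN^d$, so cyclicity of $1$ for $Z^\fa$ transfers directly to cyclicity of $\xi$ for $N$. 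This settles (i).

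For (ii), I would invoke Lemma \ref{L:modelgauge}(i) to obtain, for each $t \in [0,2\pi]$, a unitary $W_t \in B(\H_\fa)$ satisfying $W_t 1 = 1$ and $W_t Z^\fa W_t^* = e^{it}Z^\fa$. Setting $Y_t = X^{-1}W_t X$ produces an invertible operator on $\fH$ with $\|Y_t\| \leq \|X^{-1}\|\|X\|$, and conjugating the gauge identity through $X$ immediately yields $Y_t N Y_t^{-1} = e^{it}N$. For the invariance $Y_t^{-1}\xi = \xi$, observe that $W_t$ unitary with $W_t 1 = 1$ forces $W_t^* 1 = 1$, so that $Y_t^{-1}\xi = X^{-1}W_t^* X \cdot X^{-1}1/\|X^{-1}1\| = X^{-1}W_t^* 1/\|X^{-1}1\| = \xi$, as required.

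For (iii), Lemma \ref{L:modelgauge}(ii) asserts that $\|(Z^\fa)^\alpha 1\|^2 = \alpha!/|\alpha|!$ whenever $x^\alpha \notin \fa$. The identity $N^\alpha\xi = X^{-1}(Z^\fa)^\alpha 1/\|X^{-1}1\|$, combined with the elementary inequalities $\|X^{-1}(Z^\fa)^\alpha 1\| \geq \|(Z^\fa)^\alpha 1\|/\|X\|$ and $\|X^{-1}1\| \leq \|X^{-1}\|$, yields the stated lower bound on $(|\alpha|!/\alpha!)\|N^\alpha\xi\|^2$. I do not anticipate any serious technical obstacle; the conceptual content of the theorem lies in the observation that the invariance condition $Y_t^{-1}\xi = \xi$ for all $t$ forces $X\xi$ into the fixed subspace of the gauge family $\{W_t^*\}$. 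By a Liouville-type argument on $\H_\fa$ (holomorphic functions on $\bB_d$ invariant under all rotations $z \mapsto e^{it}z$ are constant), this fixed subspace is $\CC\cdot 1$, so the normalization of $\xi$ chosen in the first paragraph is essentially the unique unit cyclic vector compatible with (ii), which is what makes the three parts of the theorem fit together consistently.
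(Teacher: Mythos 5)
Your proposal follows essentially the same route as the paper: take $\xi=X^{-1}1/\|X^{-1}1\|$, transport cyclicity of $1$ for $Z^{\frk{a}}$ to cyclicity of $\xi$ for $N$, conjugate the gauge unitaries $W_t$ from Lemma \ref{L:modelgauge}(i) by $X$ to get $Y_t$, and use Lemma \ref{L:modelgauge}(ii) together with the norm estimates for $X$ and $X^{-1}$ to get part (iii). This is exactly the paper's argument, so in terms of structure there is nothing to compare.

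One small point worth being careful about, which is actually inherited from the paper's own computation: the estimates you cite in (iii) — namely $\|X^{-1}(Z^{\fa})^\alpha 1\|\geq \|(Z^{\fa})^\alpha 1\|/\|X\|$ and $\|X^{-1}1\|\leq \|X^{-1}\|$ applied to
\[
\frac{|\alpha|!}{\alpha!}\|N^\alpha\xi\|^2 = \frac{|\alpha|!}{\alpha!}\,\frac{\|X^{-1}(Z^{\fa})^\alpha 1\|^2}{\|X^{-1}1\|^2}
\]
— give the lower bound $(\|X\|\,\|X^{-1}\|)^{-2}$, not the stated $(\|X\|\,\|X^{-1}\|)^{-1}$. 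Since $\|X\|\,\|X^{-1}\|\geq 1$, this is strictly weaker than the bound claimed in the theorem (and in the paper's displayed chain, where the denominator $\|X^{-1}1\|$ also appears unsquared). The weaker squared bound is still perfectly adequate for the only application of this theorem, namely supplying hypothesis (c) of Theorem \ref{T:sim}, so the conceptual content is unaffected; but you should not assert that those two inequalities yield the constant $(\|X\|\,\|X^{-1}\|)^{-1}$ as stated. The closing remark about the Liouville-type fixed-subspace argument is a correct observation but is not used and is not part of the paper's proof.
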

\begin{proof}
	Let
	\[
		\xi=\frac{1}{\|X^{-1}1\|}X^{-1}1.
	\]
	Since $1$ is cyclic for $Z^{\fa}$, we see that the unit vector $\xi$ is cyclic for $N$ so that (i) holds. Next, invoking  Lemma \ref{L:modelgauge} we find that for every $0\leq t\leq 2\pi$ there is a unitary operator $W_t\in B(\H_\fa)$ such that $W_t 1=1$ and
	$
		W_t Z W_t^*=e^{it}Z.
	$
	Put $Y_t=X^{-1}W_tX$ for every $0\leq t\leq 2\pi$. Then, we have 
	\[
		Y_t^{-1} \xi=\frac{1}{\|X^{-1}1\|}X^{-1}W_t^*1=\xi
	\]
	and 
	\[
	\|Y_t\|\leq \|X\| \|X^{-1}\|
	\]
	for every $0\leq t\leq 2\pi$. Furthermore, we observe that
	\begin{align*}
		Y_t N Y_t^{-1}&=X^{-1}W_tX N X^{-1}W^*_tX\\
		&=X^{-1}W_tZ^\fa W^*_tX\\
		&=e^{it}X^{-1}Z^\fa X=e^{it}N.
	\end{align*}
 Hence, (ii) is satisfied. Finally, if $\alpha\in \bN^d$ has the property that $x^\alpha\notin \fa$, then by Lemma \ref{L:modelgauge} we find
	\[
		\frac{|\alpha|!}{\alpha!}\|Z^\alpha 1\|^2=1
	\]
	whence
	\begin{align*}
	\frac{|\alpha|!}{\alpha!} \|N^\alpha \xi\|^2&=\frac{|\alpha|!}{\alpha!} \|X^{-1}Z^\alpha X \xi\|^2\\
		&=\frac{|\alpha|!}{\alpha!} \frac{1}{\|X^{-1}1\|}\|X^{-1}Z^\alpha 1\|^2\\
		&\geq \frac{|\alpha|!}{\alpha!} \frac{1}{\|X^{-1}\|\|X\|}\|Z^\alpha 1\|^2= \frac{1}{\|X^{-1}\|\|X\|}
	\end{align*}
	so (iii) is established.
\end{proof}

Our next objective is to show that conditions (i),(ii) and (iii) from the previous theorem are in fact sufficient for a nilpotent commuting row contraction to be similar to the model via a similarity with controlled norm. Proving this result requires several technical lemmas.
First, we show how a norm condition can be used to control the angle between certain vectors.

\begin{lemma}\label{L:orthog}
	Let $T=(T_1,\ldots,T_d)$ be a commuting row contraction on some Hilbert space $\fH$.
	Let $\xi\in \fH$ be a unit vector.
	Let $\alpha,\beta\in\bN^d$ with $|\alpha|=|\beta|$ and let $\eps>0$.
	Assume that 
	\[
	\frac{|\alpha|!}{\alpha!}\|T^\alpha \xi\|^2\geq 1-\eps, \quad {\frac{|\beta|!}{\beta!}}\|T^\beta \xi\|^2\geq 1-\eps.
	\]
	Then we have
	\[
		\left(\frac{|\alpha|!}{\alpha!}{\frac{|\beta|!}{\beta!}}\right)^{1/2}|\langle T^\alpha \xi, T^\beta \xi \rangle|\leq \eps.
	\]
\end{lemma}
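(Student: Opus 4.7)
The plan is to deduce the lemma from the functional-calculus estimate
\[
\|p(T)\eta\|^2 \leq \|p\|_{H^2_d}^2,
\]
valid for any commuting row contraction $T$, any unit vector $\eta\in\fH$, and any homogeneous polynomial $p$. I would prove this auxiliary inequality by induction on the degree $n$ of $p$. The case $n=0$ is trivial. For $n\geq 1$, Euler's identity yields $p = \tfrac{1}{n}\sum_{i=1}^d x_i\,\pd_i p$, so setting $q_i := \tfrac{1}{n}\pd_i p$ (homogeneous of degree $n-1$) we have $p(T)\eta = \sum_{i=1}^d T_i q_i(T)\eta$. The row-contraction condition, which amounts to $\|\sum_i T_i\zeta_i\|^2 \leq \sum_i \|\zeta_i\|^2$ for any $\zeta_i\in\fH$, combined with the inductive hypothesis applied individually to each $q_i$ at $\eta$, yields
\[
\|p(T)\eta\|^2 \leq \sum_{i=1}^d \|q_i(T)\eta\|^2 \leq \sum_{i=1}^d \|q_i\|^2_{H^2_d}.
\]
A short monomial calculation using $\|x^\gamma\|^2_{H^2_d} = \gamma!/|\gamma|!$ and $\sum_i\gamma_i = n$ verifies the combinatorial identity $\sum_{i=1}^d \|q_i\|^2_{H^2_d} = \|p\|^2_{H^2_d}$ (no cross terms arise because the shifted monomials $x^{\gamma-e_i}$ coming from distinct $\gamma$ remain distinct), closing the induction.

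Assuming $\alpha\neq\beta$ (the case $\alpha=\beta$ is either vacuous or trivial depending on $\eps$), I then apply the inequality with $p(x) = d_\alpha x^\alpha + d_\beta x^\beta$. The orthogonality of $x^\alpha, x^\beta$ in $H^2_d$ gives $\|p\|^2_{H^2_d} = |d_\alpha|^2\alpha!/|\alpha|! + |d_\beta|^2\beta!/|\beta|!$. Setting $a := \sqrt{|\alpha|!/\alpha!}\,T^\alpha\xi$, $b := \sqrt{|\beta|!/\beta!}\,T^\beta\xi$ and choosing scalars so that $p(T)\xi = c_\alpha a + c_\beta b$, the estimate rephrases as
\[
\|c_\alpha a + c_\beta b\|^2 \leq |c_\alpha|^2 + |c_\beta|^2 \qquad \text{for all } c_\alpha,c_\beta\in\bC.
\]

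To extract the orthogonality bound, I take $c_\alpha = 1/\sqrt{2}$ and $c_\beta = e^{i\phi}/\sqrt{2}$ with $\phi$ arranged so that $e^{-i\phi}\langle a,b\rangle = |\langle a,b\rangle|$. Expanding produces $\tfrac{1}{2}\|a\|^2 + \tfrac{1}{2}\|b\|^2 + |\langle a,b\rangle| \leq 1$, and substituting the hypotheses $\|a\|^2,\|b\|^2 \geq 1-\eps$ gives $|\langle a,b\rangle|\leq \eps$; unpacking the definitions of $a$ and $b$ recovers the stated inequality. The main technical obstacle is the inductive step: one must use Euler's identity to produce a decomposition $p = \sum_i x_i q_i$ that is \emph{exactly} norm-preserving in $H^2_d$, rather than merely norm-contracting, and the combinatorial identity making this work relies on how the Drury-Arveson weights $\gamma!/|\gamma|!$ interact with partial differentiation.
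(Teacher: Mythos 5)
Your proof is correct, and it rests on the same underlying fact as the paper's: for a commuting row contraction $T$, a unit vector $\eta$, and a homogeneous polynomial $p$ of degree $\ell$, one has $\|p(T)\eta\|\leq\|p\|_{H^2_d}$, which is equivalent to the operator inequality $\sum_{|\gamma|=\ell}\frac{\ell!}{\gamma!}T^\gamma (T^*)^\gamma\leq I$. Both arguments then finish with the same computation: choose a phase so that the cross term in the expansion of a two-term linear combination becomes $|\langle a,b\rangle|$, and insert the lower bounds on $\|a\|^2$ and $\|b\|^2$. Where you diverge from the paper is in how the key inequality is obtained. The paper gets it in one line by iterating the unital completely positive contraction $\Psi_T(X)=\sum_k T_kXT_k^*$, while you induct on degree, decomposing $p=\sum_i x_i q_i$ via Euler's identity and verifying the exact combinatorial identity $\sum_i\|q_i\|^2_{H^2_d}=\|p\|^2_{H^2_d}$ for the Drury--Arveson weights $\gamma!/|\gamma|!$. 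Your route is more computational but self-contained (no appeal to CP maps), and it isolates precisely why those weights arise; the CP-map argument is slicker and generalizes more readily. One small correction to your aside: the case $\alpha=\beta$ is not ``vacuous or trivial'' -- the lemma is actually false there whenever $\eps<1$, since the hypothesis allows $\frac{|\alpha|!}{\alpha!}\|T^\alpha\xi\|^2$ to approach $1$ while the conclusion would cap it at $\eps$. The paper's proof likewise requires $\alpha\neq\beta$ (otherwise the claimed row contraction would need $2\frac{\ell!}{\alpha!}T^\alpha(T^*)^\alpha\leq I$), and the only downstream use, in Lemma~\ref{L:estimatesamelength}, is on off-diagonal Gram entries, so the omission is harmless; but you were right to flag it.
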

\begin{proof}
	Assume that $|\alpha|=|\beta|=\ell$ and choose $\zeta\in \bC$ with $|\zeta|=1$ such that
	\[
		\ol{\zeta}\langle T^\alpha\xi, T^\beta\xi \rangle=|\langle T^\alpha\xi, T^\beta\xi \rangle|.
	\]
	The map $\Psi_T:B(\fH)\to B(\fH)$ defined as
	\[
		\Psi_T(X)=\sum_{k=1}^d T_k X T_k^*, \quad X\in B(\fH)
	\]
	is completely positive and contractive, since $T$ is a row contraction. We see that
	\[
		\sum_{|\alpha|=\ell} \frac{\ell!}{\alpha!} T^\alpha (T^*)^\alpha=(\Psi_T\circ \Psi_T \circ\ldots\circ \Psi_T)(I)\leq I
	\]
	so in particular the pair
	$
	\left( \left(\frac{|\alpha|!}{\alpha!}\right)^{1/2}T^\alpha , \left(\frac{|\beta|!}{\beta!}\right)^{1/2}T^\beta\right)
	$
	is a row contraction.
	Thus, in view of the equality
	\[
	\left(\frac{|\alpha|!}{\alpha!}\right)^{1/2}T^\alpha \xi+\zeta \left(\frac{|\beta|!}{\beta!}\right)^{1/2}T^\beta \xi=\begin{bmatrix} \left(\frac{|\alpha|!}{\alpha!}\right)^{1/2}T^\alpha & \left(\frac{|\beta|!}{\beta!}\right)^{1/2}T^\beta\end{bmatrix}\begin{bmatrix} \xi \\ \zeta\xi \end{bmatrix}
	\]
	we infer that
	\[
		\left\|\left(\frac{|\alpha|!}{\alpha!}\right)^{1/2}T^\alpha \xi+\zeta \left(\frac{|\beta|!}{\beta!}\right)^{1/2}T^\beta \xi\right\|^2\leq 2\|\xi\|^2=2.
	\]
	On the other hand, we also have
	\begin{align*}
		&\left\|\left(\frac{|\alpha|!}{\alpha!}\right)^{1/2}T^\alpha \xi+ \zeta \left(\frac{|\beta|!}{\beta!}\right)^{1/2}T^\beta \xi\right\|^2\\
		&=\frac{|\alpha|!}{\alpha!}\|T^\alpha\xi\|^2+ {\frac{|\beta|!}{\beta!}}\|T^\beta\xi\|^2+2\left(\frac{|\alpha|!}{\alpha!}{\frac{|\beta|!}{\beta!}}\right)^{1/2}\re \left(\ol{\zeta}\langle T^\alpha\xi, T^\beta\xi \rangle \right)\\
		&=\frac{|\alpha|!}{\alpha!}\|T^\alpha\xi\|^2+ {\frac{|\beta|!}{\beta!}}\|T^\beta\xi\|^2+2\left(\frac{|\alpha|!}{\alpha!}{\frac{|\beta|!}{\beta!}}\right)^{1/2}|\langle T^\alpha\xi, T^\beta\xi \rangle|\\
		&\geq 2(1-\eps)+2\left(\frac{|\alpha|!}{\alpha!}{\frac{|\beta|!}{\beta!}}\right)^{1/2}|\langle T^\alpha\xi, T^\beta\xi \rangle|.
	\end{align*}
	Combining these two inequalities yields
	\[
		\left(\frac{|\alpha|!}{\alpha!}{\frac{|\beta|!}{\beta!}}\right)^{1/2}|\langle T^\alpha \xi, T^\beta \xi \rangle|\leq \eps. \qedhere
	\]
\end{proof}

The next step is a key estimate.

\begin{lemma}\label{L:estimatesamelength}
	Let $T=(T_1,\ldots,T_d)$ be a commuting row contraction on some Hilbert space $\fH$. Let $\xi\in \fH$ be a unit vector, let $\ell\in \bN$ and let $\eps>0$. Assume that 
		\[
		\frac{|\alpha|!}{\alpha!}\|T^\alpha \xi\|^2\geq 1-\eps
	\]
	for every $\alpha\in \bN^d$ such that $|\alpha|=\ell$.
	For every $\alpha\in \bN^d$ with $|\alpha|=\ell$, let $c_\alpha\in \bC$.
	Then, we have that
	\[
		(1-\eps\card S) \left( \sum_{|\alpha|=\ell}|c_\alpha|^2\right)\leq \left\| \sum_{|\alpha|=\ell} \left(\frac{\ell!}{\alpha!}\right)^{1/2}c_\alpha T^\alpha \xi \right\|^2
	\]

	where 
	\[
		S=\{\alpha\in \bN^d:c_\alpha\neq 0\}.
	\]
\end{lemma}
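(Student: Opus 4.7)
The plan is to treat the vectors $v_\alpha := \left(\tfrac{\ell!}{\alpha!}\right)^{1/2} T^\alpha \xi$ as an approximately orthonormal system, and then expand the squared norm directly. The hypothesis tells us exactly that $\|v_\alpha\|^2 \geq 1-\eps$ for every $\alpha$ with $|\alpha|=\ell$. The earlier Lemma \ref{L:orthog}, applied to any pair $\alpha\neq \beta$ with $|\alpha|=|\beta|=\ell$, yields
\[
|\langle v_\alpha, v_\beta\rangle| = \left(\tfrac{\ell!}{\alpha!}\tfrac{\ell!}{\beta!}\right)^{1/2} |\langle T^\alpha \xi, T^\beta \xi\rangle| \leq \eps.
\]
So up to an error of size $\eps$, the $v_\alpha$ behave like an orthonormal set, which is the content we need.

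Next I would just expand. Writing $u = \sum_{\alpha\in S} c_\alpha v_\alpha$, we get
\[
\|u\|^2 = \sum_{\alpha\in S} |c_\alpha|^2 \|v_\alpha\|^2 + \sum_{\substack{\alpha,\beta\in S \\ \alpha\neq \beta}} c_\alpha \overline{c_\beta} \langle v_\alpha, v_\beta\rangle.
\]
The diagonal sum is bounded below by $(1-\eps)\sum_{\alpha\in S}|c_\alpha|^2$. For the off-diagonal sum, applying $|\langle v_\alpha,v_\beta\rangle|\leq \eps$ and the AM--GM inequality $|c_\alpha||c_\beta|\leq \tfrac12(|c_\alpha|^2+|c_\beta|^2)$ gives
\[
\left|\sum_{\substack{\alpha,\beta\in S \\ \alpha\neq \beta}} c_\alpha \overline{c_\beta} \langle v_\alpha, v_\beta\rangle\right| \leq \eps \sum_{\substack{\alpha,\beta\in S \\ \alpha\neq \beta}} |c_\alpha||c_\beta| \leq \eps(\card S - 1)\sum_{\alpha\in S} |c_\alpha|^2.
\]

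Combining these two estimates:
\[
\|u\|^2 \geq (1-\eps)\sum_{\alpha\in S}|c_\alpha|^2 - \eps(\card S - 1)\sum_{\alpha\in S}|c_\alpha|^2 = (1-\eps\,\card S)\sum_{\alpha\in S}|c_\alpha|^2,
\]
which is the desired inequality (noting that $\sum_{|\alpha|=\ell}|c_\alpha|^2 = \sum_{\alpha\in S}|c_\alpha|^2$).

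There is no real obstacle here; the only mild subtlety is making sure the AM--GM bound on the cross terms produces the factor $\card S$ and not $\card S - 1$ in a way that matches the stated conclusion. Since the diagonal already contributes $(1-\eps)$ and the off-diagonal contributes an error of $\eps(\card S - 1)$, the two merge cleanly into $(1-\eps\,\card S)$, so no loss occurs. The row contraction hypothesis is used only implicitly through Lemma \ref{L:orthog} (which itself relies on the fact that $(\left(\tfrac{\ell!}{\alpha!}\right)^{1/2} T^\alpha)_{|\alpha|=\ell}$ is a row contraction).
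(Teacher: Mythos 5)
Your proof is correct and takes essentially the same route as the paper. The paper packages the computation as an operator inequality for the Grammian matrix $G=D+A$, bounding $\|A\|\leq\eps(\card S-1)$ since $A$ has zero diagonal with all entries of modulus at most $\eps$; your direct quadratic-form expansion with AM--GM is precisely that norm bound carried out by hand, and both arguments rest on the same two ingredients, namely the hypothesis on $\|T^\alpha\xi\|^2$ and Lemma~\ref{L:orthog}.
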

\begin{proof}
	Consider the Grammian matrix
	\[
		G=\left[\left(\frac{\ell!}{\alpha!}\frac{\ell!}{\beta!}\right)^{1/2}\langle T^\alpha \xi, T^\beta \xi \rangle \right]_{\alpha,\beta\in S}.
	\]
	Put 
	\[
		D=\diag\left\{\frac{\ell!}{\alpha!}\|T^\alpha\xi\|^2:\alpha\in S\right\}
	\]
	and
	\[
		A=G-D.
	\]
	By assumption, we see that 
	\[
		D\geq (1-\eps)I.
	\]
	Furthermore, we may invoke Lemma \ref{L:orthog} to see that every entry of $A$ has modulus at most $\eps$.
	Because $A$ has zero diagonal, it follows that
	\[
	\|A\|\leq \eps (\card S-1).
	\]
	Therefore, we obtain
	\[
		G\geq D-\|A\|I\geq  (1-\eps\card S)I
	\]
	which immediately implies the desired statement.
\end{proof}

The previous norm estimate only applies to vectors that can be obtained as linear combinations of images of powers of $T$ with the same length.
In order to move past this restriction, we need the following tool.

\begin{lemma}\label{L:projhomog}
	Let $T=(T_1,\ldots,T_d)$ be a commuting $d$-tuple on some Hilbert space $\fH$.
  Let $\xi\in \fH$ be a unit vector.
  Assume that there is a constant $\gamma>0$ such that for each $t\in [0,2\pi]$, there is an invertible operator $Y_t\in B(\fH)$ such that $\|Y_t\|\leq \gamma, Y_t^{-1}\xi=\xi$ and 
	$
		Y_t T Y_t^{-1}=e^{it}T.
	$
	Let $\Xi\subset \bN^d$ be a finite subset. Then,
	\[
	\left\|\sum_{\substack{\alpha\in \Xi\\|\alpha|=\ell}}c_\alpha T^\alpha\xi \right\|\leq  \gamma\left\|\sum_{\alpha\in \Xi}c_\alpha T^\alpha\xi \right\|
	\]
	for every $\ell\in \bN$ and every collection of complex numbers $\{ c_\alpha\in \bC: \alpha\in \Xi\}$.
\end{lemma}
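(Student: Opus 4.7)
The plan is to interpret the hypothesis $Y_tTY_t^{-1}=e^{it}T$ as saying that $Y_t$ acts diagonally on the span of the vectors $T^\alpha\xi$, with eigenvalue $e^{it|\alpha|}$, and then extract the ``$\ell$-th isotypic component'' of an arbitrary vector $v = \sum_{\alpha\in\Xi} c_\alpha T^\alpha\xi$ by a finite Fourier averaging argument.

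First, I would observe that iterating the identity $Y_tTY_t^{-1}=e^{it}T$ yields $Y_tT^\alpha Y_t^{-1}=e^{it|\alpha|}T^\alpha$ for every $\alpha\in\bN^d$. Combined with $Y_t^{-1}\xi=\xi$ (so also $Y_t\xi=\xi$), this gives
\[
Y_tT^\alpha\xi = Y_t T^\alpha Y_t^{-1}\xi = e^{it|\alpha|}T^\alpha\xi.
\]
Setting $L=\{|\alpha|:\alpha\in\Xi\}\subset\bN$, which is finite, and $v_\ell=\sum_{\alpha\in\Xi,\,|\alpha|=\ell}c_\alpha T^\alpha\xi$, we obtain
\[
Y_t v = \sum_{\ell\in L} e^{it\ell}\,v_\ell.
\]
The target inequality reads $\|v_\ell\|\le\gamma\|v\|$ for each $\ell\in L$.

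Next, I would fix $\ell_0\in L$ and reconstruct $v_{\ell_0}$ from a finite sample of the $Y_tv$. Choose an integer $n$ strictly greater than $\max_{\ell,\ell'\in L}|\ell-\ell'|$ and set $t_j=2\pi j/n$ and $a_j=\frac{1}{n}e^{-it_j\ell_0}$ for $j=0,1,\ldots,n-1$. Then
\[
\sum_{j=0}^{n-1}a_j\,Y_{t_j}v \;=\; \sum_{\ell\in L}\left(\frac{1}{n}\sum_{j=0}^{n-1}e^{2\pi ij(\ell-\ell_0)/n}\right)v_\ell.
\]
The inner sum equals $1$ when $\ell\equiv\ell_0\pmod n$ and $0$ otherwise; by the choice of $n$, the only $\ell\in L$ satisfying this congruence is $\ell=\ell_0$, so the right-hand side collapses to $v_{\ell_0}$. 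Finally, the triangle inequality and the hypothesis $\|Y_{t_j}\|\le\gamma$ yield
\[
\|v_{\ell_0}\| \;\le\; \sum_{j=0}^{n-1}|a_j|\,\|Y_{t_j}\|\,\|v\| \;\le\; \gamma\|v\|,
\]
which is what we wanted.

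There is essentially no main obstacle: the hypotheses package together a discrete group of automorphisms of the relevant subspace, and the result is a standard Peter--Weyl-style isotypic projection. The only minor subtlety is that we are not given any continuity of $t\mapsto Y_t$, which is why I prefer the finite Riemann-sum argument above to a $\frac{1}{2\pi}\int_0^{2\pi}e^{-it\ell_0}Y_tv\,dt$ integral; finiteness of $L$ makes the discrete version exact and sidesteps any measurability question.
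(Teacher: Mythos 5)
Your proof is correct and uses the same underlying idea as the paper: interpret $Y_tT^\alpha\xi=e^{i|\alpha|t}T^\alpha\xi$ and extract the degree-$\ell$ isotypic component of $v=\sum_{\alpha\in\Xi}c_\alpha T^\alpha\xi$ by Fourier averaging against $e^{-i\ell t}$, then bound the norm using $\|Y_t\|\leq\gamma$. The paper implements this with the continuous integral $\frac{1}{2\pi}\int_0^{2\pi}Y_t v\,e^{-i\ell t}\,dt$, while you replace it with a finite Fourier sum over $n$ equispaced sample points. The measurability concern you raise about the integral version is, in fact, harmless in the paper's setup: the paper first rewrites the integrand as the trigonometric polynomial $\bigl(\sum_{\alpha\in\Xi}c_\alpha e^{i|\alpha|t}T^\alpha\xi\bigr)e^{-i\ell t}$ \emph{before} identifying it with $Y_t v\,e^{-i\ell t}$, so the integral is manifestly well-defined and continuity of $t\mapsto Y_t$ is never invoked. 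Still, your discrete version is a clean and exact alternative that buys the same conclusion with the same constant $\gamma$ and sidesteps the need to even mention integration, so it is a perfectly acceptable — and arguably more elementary — implementation of the same idea.
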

\begin{proof}
	First, note that 
	\[
		Y_t T^\alpha Y_t^{-1}=e^{i|\alpha|t}T^\alpha
	\]
	for every $\alpha\in \bN^d$. Fix $\ell\in \bN$. We obtain
	\begin{align*}
		\sum_{\substack{\alpha\in \Xi\\|\alpha|=\ell}}c_\alpha T^\alpha\xi&=\frac{1}{2\pi}\int_{0}^{2\pi} \left( \sum_{\alpha\in \Xi}c_\alpha e^{i|\alpha|t}T^\alpha\xi \right)e^{-i\ell t}dt\\
		&=\frac{1}{2\pi}\int_{0}^{2\pi} \left( \sum_{\alpha\in \Xi}c_\alpha Y_tT^\alpha Y_t^{-1}\xi \right)e^{-i\ell t}dt\\
		&=\frac{1}{2\pi}\int_{0}^{2\pi}  Y_t\left( \sum_{\alpha\in \Xi}c_\alpha T^\alpha \xi \right)e^{-i\ell t}dt
	\end{align*}
	which implies that
	\begin{align*}
		\left\|\sum_{\substack{\alpha\in \Xi\\|\alpha|=\ell}}c_\alpha T^\alpha\xi \right\| &\leq  \sup_{0\leq t\leq 2\pi}\|Y_t\| \left\|\sum_{\alpha\in \Xi}c_\alpha T^\alpha \xi \right\|\\
		&\leq \gamma\left\|\sum_{\alpha\in \Xi}c_\alpha T^\alpha \xi \right\|.
	\end{align*}
\end{proof}

Gathering all our previous observations, we obtain our main technical tool.

\begin{lemma}\label{L:estimate}
	Let $N=(N_1,\ldots,N_d)$ be a nilpotent commuting row contraction on some Hilbert space $\fH$.
  Let 
	\[
		\Xi=\{\alpha\in \bN^d: N^\alpha\neq 0\}
	\]
	and let $L\in \bN$ satisfy 
	\[
		\Xi\subset \{\alpha\in \bN^d:|\alpha|\leq L\}.
	\]
	Assume that the following properties hold.
	\begin{enumerate}
		\item[\rm{(a)}] There is a unit vector $\xi\in \fH$ which is cyclic for $N$.
		\item[\rm{(b)}] There is a constant $\gamma>0$ such that for each $t\in [0,2\pi]$, there is an invertible operator $Y_t\in B(\fH)$ such that $\|Y_t\|\leq \gamma, Y_t^{-1}\xi=\xi$ and 
		$
			Y_t N Y_t^{-1}=e^{it}N.
		$
		\item[\rm{(c)}] There is $\eps>0$ such that 
		\[
			\frac{|\alpha|!}{\alpha!} \|N^\alpha \xi\|^2\geq 1-\eps
		\]
		for every $\alpha\in \Xi$.
	\end{enumerate} 
	For each $\alpha\in \Xi$, let $c_\alpha\in \bC$ and put
	\[
		h=\sum_{\alpha\in \Xi}c_\alpha \left(\frac{|\alpha|!}{\alpha!}\right)^{1/2} N^\alpha\xi.
	\]
	Then, we have
	 \[
		 \frac{(1-\eps\card \Xi)}{(L+1)\gamma^2} \left( \sum_{\alpha\in \Xi}|c_\alpha|^2\right)\leq  \|h\|^2\leq (L+1)\left( \sum_{\alpha\in \Xi}|c_\alpha|^2\right).
	 \]
\end{lemma}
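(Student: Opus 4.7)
The plan is to split $h$ into homogeneous components by degree and bound each bound direction separately using the lemmas already proved. Specifically, for $\ell = 0, 1, \ldots, L$, set
\[ h_\ell = \sum_{\substack{\alpha\in\Xi\\ |\alpha|=\ell}} c_\alpha \left(\tfrac{|\alpha|!}{\alpha!}\right)^{1/2} N^\alpha\xi, \]
so that $h = \sum_{\ell=0}^L h_\ell$ (only $L+1$ terms, since $\Xi\subset\{|\alpha|\leq L\}$).

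For the upper bound, I would first apply the triangle inequality and then the Cauchy--Schwarz inequality to the sum of $L+1$ terms to obtain $\|h\|^2\leq (L+1)\sum_\ell \|h_\ell\|^2$. For each fixed $\ell$, viewing $h_\ell$ as the image under the row operator $[(\ell!/\alpha!)^{1/2} N^\alpha]_{|\alpha|=\ell}$ of the column vector $(c_\alpha\xi)$, the argument from the proof of Lemma \ref{L:orthog} shows that this row operator is a contraction (it is the iterated completely positive map $\Psi_N$ applied to $I$). This yields $\|h_\ell\|^2 \leq \sum_{|\alpha|=\ell}|c_\alpha|^2$, and summing over $\ell$ gives the desired upper bound.

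For the lower bound, I would apply Lemma \ref{L:projhomog} (using hypothesis (b)) to each $\ell$, obtaining $\|h_\ell\|\leq \gamma\|h\|$. Then, hypothesis (c) allows invoking Lemma \ref{L:estimatesamelength} to each $h_\ell$: letting $S_\ell=\{\alpha\in\Xi:|\alpha|=\ell,\ c_\alpha\neq 0\}$, and using that $\card S_\ell\leq \card \Xi$, I get
\[ (1-\eps\card\Xi) \sum_{|\alpha|=\ell}|c_\alpha|^2 \leq \|h_\ell\|^2 \leq \gamma^2 \|h\|^2. \]
Summing over $\ell=0,\ldots,L$ produces $(1-\eps\card\Xi)\sum_\alpha|c_\alpha|^2\leq (L+1)\gamma^2\|h\|^2$, which rearranges to the required lower bound.

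Essentially all of the analytic content is already packaged in the preceding three lemmas, so the main task is bookkeeping: choosing the homogeneous decomposition, replacing $\card S_\ell$ by the uniform upper bound $\card \Xi$ so that the inequality survives summation over $\ell$, and picking up the expected factor $L+1$ (the number of possible degrees) from the Cauchy--Schwarz step in the upper bound and from the summation in the lower bound. There is no significant obstacle beyond verifying that these factors combine correctly.
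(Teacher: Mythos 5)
Your proposal is correct and follows essentially the same route as the paper's proof: decompose $h$ into homogeneous components $h_\ell$, use the row-contraction argument plus Cauchy--Schwarz for the upper bound, and combine Lemma \ref{L:estimatesamelength} with Lemma \ref{L:projhomog} and a sum over $\ell=0,\ldots,L$ for the lower bound. The only point worth noting is that you make explicit the replacement of $\card S_\ell$ by the uniform bound $\card\Xi$, a step the paper performs tacitly.
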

\begin{proof}
	Invoking Lemma \ref{L:estimatesamelength}, for every $\ell\in \bN$ we find that
	\[
		(1-\eps\card \Xi) \left( \sum_{\substack{\alpha\in \Xi\\|\alpha|=\ell}}|c_\alpha|^2\right)\leq \left\| \sum_{\substack{\alpha\in \Xi\\|\alpha|=\ell}} \left(\frac{\ell!}{\alpha!}\right)^{1/2}c_\alpha N^\alpha \xi \right\|^2.
	\]
	Combining this inequality with Lemma \ref{L:projhomog}, we find
	\[
		(1-\eps\card \Xi) \left( \sum_{\substack{\alpha\in \Xi\\|\alpha|=\ell}}|c_\alpha|^2\right)\leq \gamma^2 \|h\|^2
	\]
	for every $\ell\in \bN$. Summing over $0\leq\ell\leq L$ we obtain
	\[
		\frac{(1-\eps\card \Xi)}{L+1} \left( \sum_{\alpha\in \Xi}|c_\alpha|^2\right)\leq  \gamma^2 \|h\|^2.
	\]
	Conversely, using that $N$ is a row contraction and arguing as in the proof of Lemma \ref{L:orthog}, we see that
	\[
		\sum_{\|\alpha|=\ell}\frac{\ell!}{\alpha!}N^\alpha N^{*\alpha}\leq I
	\]
	for every $\ell\in \bN$.
	Hence, applying the contractive row operator 
	\[
	\Gamma=\left[\left(\frac{\ell!}{\alpha!}\right)^{1/2} N^\alpha\right]_{\substack{\alpha\in \Xi\\|\alpha|=\ell}}
	\]
	to the column vector
	$
	v=[c_\alpha \xi]_{\alpha\in \Xi, |\alpha|=\ell}
	$
	we see that
	\begin{align*}
	\left\|\sum_{\substack{\alpha\in \Xi\\|\alpha|=\ell}} c_\alpha \left(\frac{\ell!}{\alpha!}\right)^{1/2} N^\alpha\xi\right\|^2&=\|\Gamma v\|^2\leq \|v\|^2= \sum_{\substack{\alpha\in \Xi\\|\alpha|=\ell}}|c_\alpha|^2.
	\end{align*}
	We infer that
		\begin{align*}
		\|h\|^2&=\left\| \sum_{\alpha\in \Xi}c_\alpha \left(\frac{|\alpha|!}{\alpha!}\right)^{1/2} N^\alpha\xi\right\|^2\leq \left( \sum_{\ell=0}^L \left\|\sum_{\substack{\alpha\in \Xi\\|\alpha|=\ell}} c_\alpha \left(\frac{\ell!}{\alpha!}\right)^{1/2} N^\alpha\xi\right\|\right)^2\\
		&\leq (L+1) \sum_{\ell=0}^L \left\|\sum_{\substack{\alpha\in \Xi\\|\alpha|=\ell}} c_\alpha \left(\frac{\ell!}{\alpha!}\right)^{1/2} N^\alpha\xi\right\|^2 
		\leq  (L+1) \sum_{\ell=0}^L \left( \sum_{\substack{\alpha\in \Xi\\|\alpha|=\ell}}|c_\alpha|^2\right)\\
		&=(L+1) \left( \sum_{\alpha\in \Xi}|c_\alpha|^2\right).
	\end{align*}
 	Thus, we have found that if 
 \[
		 h=\sum_{\alpha\in \Xi}c_\alpha \left(\frac{|\alpha|!}{\alpha!}\right)^{1/2} N^\alpha\xi
	 \]
	 then
	\[
		\frac{(1-\eps\card \Xi)}{(L+1)\gamma^2} \left( \sum_{\alpha\in \Xi}|c_\alpha|^2\right)\leq  \|h\|^2\leq (L+1) \left( \sum_{\alpha\in \Xi}|c_\alpha|^2\right). \qedhere
 	\]
\end{proof}

We can now state the main result of this section, which shows that conditions (i), (ii) and (iii) from Theorem \ref{T:simnecessary} are in fact sufficient for the existence of a norm-controlled similarity to the model.

\begin{theorem}\label{T:sim}
	Let $N=(N_1,\ldots,N_d)$ be a nilpotent commuting row contraction on some Hilbert space $\fH$. Let $\fa=\Ann(N)$ and assume that it is generated by monomials.
  Let 
	\[
		\Xi=\{\alpha\in \bN^d: N^\alpha\neq 0\}
	\]
	and let $L\in \bN$ satisfy
	\[
		\Xi\subset \{\alpha\in \bN^d:|\alpha|\leq L\}.
	\]
	Assume that the following properties hold.
	\begin{enumerate}
		\item[\rm{(a)}] There is a unit vector $\xi\in \fH$ which is cyclic for $N$.
		\item[\rm{(b)}] There is a constant $\gamma>0$ such that for each $t\in [0,2\pi]$, there is an invertible operator $Y_t\in B(\fH)$ such that $\|Y_t\|\leq \gamma, Y_t^{-1}\xi=\xi$ and 
		$
			Y_t N Y_t^{-1}=e^{it}N.
		$
		\item[\rm{(c)}] There is $0<\eps<1/\card \Xi$ such that 
		\[
			\frac{|\alpha|!}{\alpha!} \|N^\alpha \xi\|^2\geq 1-\eps
		\]
		for every $\alpha\in \Xi$.
	\end{enumerate} 
	Then, there is an invertible operator $X:\fH\to \H_{\fa}$ with 
	\[
		\|X\|\leq\frac{  (L+1)\gamma }{(1-\eps \card \Xi)^{1/2}} , \quad \|X^{-1}\|\leq L+1
	\]
	and such that $XNX^{-1}=Z^{\fa}$.
\end{theorem}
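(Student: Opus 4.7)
\medskip

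\noindent\textbf{Proof proposal.} The plan is to write down the only reasonable candidate for the similarity, namely the linear map $X$ determined by $X(N^\alpha\xi)=x^\alpha$ for $\alpha\in\Xi$, and then harvest all the required information from Lemmas \ref{L:modelgauge} and \ref{L:estimate}. Since $\fa=\Ann(N)$ is generated by monomials, we have $\Xi=\{\alpha\in\bN^d:x^\alpha\notin\fa\}$, and by Lemma \ref{L:modelgauge}(ii) the vectors $\bigl\{\bigl(\tfrac{|\alpha|!}{\alpha!}\bigr)^{1/2}x^\alpha:\alpha\in\Xi\bigr\}$ form an orthonormal basis of $\H_\fa$; in particular $\H_\fa$ is finite dimensional of dimension $\card\Xi$. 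Because $N$ is nilpotent and $\xi$ is cyclic, the finite set $\{N^\alpha\xi:\alpha\in\Xi\}$ spans $\fH$. Defining $X$ as above on this spanning set, and observing that $(Z^\fa)^\alpha 1=x^\alpha$ for $\alpha\in\Xi$, is the natural way to arrange for $X$ to intertwine $N$ with $Z^\fa$ and to send the cyclic vector $\xi$ to the cyclic vector $1\in\H_\fa$.

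The well-definedness and the norm control come from a single application of Lemma \ref{L:estimate}. Indeed, for any choice of scalars $(c_\alpha)_{\alpha\in\Xi}$, setting
\[ h=\sum_{\alpha\in\Xi}c_\alpha\Bigl(\tfrac{|\alpha|!}{\alpha!}\Bigr)^{1/2}N^\alpha\xi \]
and using that the $\bigl(\tfrac{|\alpha|!}{\alpha!}\bigr)^{1/2}x^\alpha$ are orthonormal, we get
\[ \|Xh\|^2=\sum_{\alpha\in\Xi}|c_\alpha|^2. \]
Lemma \ref{L:estimate}, whose hypotheses (a)--(c) are exactly the ones at hand (and where the assumption $\eps\card\Xi<1$ keeps the lower constant positive), then yields
\[ \frac{1-\eps\card\Xi}{(L+1)\gamma^2}\,\|Xh\|^2\le \|h\|^2\le (L+1)\,\|Xh\|^2. \]
The left inequality shows that $X$ is well defined and injective on $\spn\{N^\alpha\xi:\alpha\in\Xi\}=\fH$ and provides
\[ \|X\|\le \frac{(L+1)^{1/2}\gamma}{(1-\eps\card\Xi)^{1/2}}\le \frac{(L+1)\gamma}{(1-\eps\card\Xi)^{1/2}}, \]
while the right inequality gives $\|X^{-1}\|\le (L+1)^{1/2}\le L+1$. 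Since $\dim\fH\le\card\Xi=\dim\H_\fa$ with $X$ injective, $X$ is also surjective.

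Finally, to verify $XN_kX^{-1}=Z^\fa_k$ for $1\le k\le d$, it suffices to check $XN_k(N^\alpha\xi)=Z^\fa_k X(N^\alpha\xi)$ for $\alpha\in\Xi$. If $\alpha+e_k\in\Xi$, both sides equal $x^{\alpha+e_k}$: on the left because $XN^{\alpha+e_k}\xi=x^{\alpha+e_k}$ by definition, and on the right because $Z^\fa_k x^\alpha=P_{\H_\fa}(x^{\alpha+e_k})=x^{\alpha+e_k}$ by Lemma \ref{L:modelgauge}(ii). If $\alpha+e_k\notin\Xi$, then $N^{\alpha+e_k}\xi=0$ so the left-hand side is $0$, and $x^{\alpha+e_k}\in\fa\subset [\fa H^2_d]$ forces $P_{\H_\fa}(x^{\alpha+e_k})=0$, so the right-hand side is also $0$.

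I expect no serious obstacle: once one realizes that the orthonormal basis identified in Lemma \ref{L:modelgauge}(ii) transports the problem on the model side into a trivial $\ell^2$ computation, the estimate furnished by Lemma \ref{L:estimate} does all of the work. The only mildly delicate point is the case analysis $\alpha+e_k\in\Xi$ versus $\alpha+e_k\notin\Xi$ in the intertwining verification, which relies crucially on $\fa$ being generated by monomials so that $\H_\fa$ is spanned by the monomials with indices in $\Xi$.
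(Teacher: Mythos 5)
Your argument is correct, and it is in fact a mild sharpening of the paper's own proof. The paper defines the same map $X$ on the spanning set $\bigl\{\bigl(\tfrac{|\alpha|!}{\alpha!}\bigr)^{1/2}N^\alpha\xi:\alpha\in\Xi\bigr\}$, but then controls the model side by applying Lemma \ref{L:estimate} a \emph{second} time, to $Z^{\fa}$ with $1$ as cyclic vector (with $\gamma=1$, $\eps=0$, the hypotheses being supplied by Lemma \ref{L:modelgauge}); combining the two two-sided estimates gives $\|X\|\le (L+1)\gamma(1-\eps\,\card\Xi)^{-1/2}$ and $\|X^{-1}\|\le L+1$. You instead observe that since $\|x^\alpha\|_{H^2_d}=(\alpha!/|\alpha|!)^{1/2}$, the monomials are orthogonal, and (by Lemma \ref{L:modelgauge}(ii)) $x^\alpha\in\H_\fa$ precisely for $\alpha\in\Xi$, the family $\bigl\{\bigl(\tfrac{|\alpha|!}{\alpha!}\bigr)^{1/2}x^\alpha:\alpha\in\Xi\bigr\}$ is an orthonormal basis of $\H_\fa$, so that $\|Xh\|^2=\sum_\alpha|c_\alpha|^2$ \emph{exactly}. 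This bypasses the second invocation of Lemma \ref{L:estimate} and delivers the improved bounds $\|X\|\le (L+1)^{1/2}\gamma(1-\eps\,\card\Xi)^{-1/2}$ and $\|X^{-1}\|\le (L+1)^{1/2}$, which of course imply those stated in the theorem. Your intertwining check and the case split on $\alpha+e_k\in\Xi$ vs.\ $\alpha+e_k\notin\Xi$ are the right way to verify $XNX^{-1}=Z^\fa$ and use the monomial hypothesis on $\fa$ exactly where it is needed. One small remark on the well-definedness step: the inequality you invoke from Lemma \ref{L:estimate} should be read with its original left-hand side $\frac{1-\eps\,\card\Xi}{(L+1)\gamma^2}\sum_\alpha|c_\alpha|^2\le\|h\|^2$ before identifying $\sum|c_\alpha|^2$ with $\|Xh\|^2$; it is this form, together with $1-\eps\,\card\Xi>0$, that shows the vectors $\bigl(\tfrac{|\alpha|!}{\alpha!}\bigr)^{1/2}N^\alpha\xi$ ($\alpha\in\Xi$) are actually linearly independent, whence $X$ is well defined on a basis of $\fH$ and automatically bijective onto $\H_\fa$ by dimension count.
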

\begin{proof}
	By Lemma \ref{L:estimate}, we know that
 \[
		 \frac{(1-\eps\card \Xi)}{(L+1)\gamma^2} \left( \sum_{\alpha\in \Xi}|c_\alpha|^2\right)\leq  \left\|\sum_{\alpha\in \Xi}c_\alpha \left(\frac{|\alpha|!}{\alpha!}\right)^{1/2} N^\alpha\xi\right\|^2\leq (L+1) \left( \sum_{\alpha\in \Xi}|c_\alpha|^2\right)
 	\]
	 for every collection of complex numbers $\{c_\alpha\in \bC: \alpha\in \Xi\}$.
	 By Lemma \ref{L:modelgauge} we may apply Lemma \ref{L:estimate} to $Z^{\fa}$ as well to obtain
 \[
	 \frac{1}{L+1} \left( \sum_{\alpha\in \Xi}|c_\alpha|^2\right)\leq  \left\|\sum_{\alpha\in \Xi}c_\alpha \left(\frac{|\alpha|!}{\alpha!}\right)^{1/2} (Z^{\fa})^\alpha 1\right\|^2\leq (L+1) \left( \sum_{\alpha\in \Xi}|c_\alpha|^2\right)
 \]
	for every collection of complex numbers $\{c_\alpha\in \bC: \alpha\in \Xi\}$. Now, since $\xi$ is cyclic for $N$, we see that every element of $\fH$ can be written as
	\[
	 \sum_{\alpha\in \Xi}c_\alpha \left(\frac{|\alpha|!}{\alpha!}\right)^{1/2} N^\alpha\xi 
	\]
	for some collection of complex numbers $\{c_\alpha\in \bC: \alpha\in \Xi\}$. A similar statement holds for the vector $1\in \H_\fa$ which is cyclic for $Z^\fa$. Furthermore, because
	\[
	\fa=\Ann(N)=\Ann(Z^\fa)
	\]
	we see that 
	\[
	 \sum_{\alpha\in \Xi}c_\alpha \left(\frac{|\alpha|!}{\alpha!}\right)^{1/2} N^\alpha\xi=0
	 \]
	 is equivalent to
	 \[
	 \sum_{\alpha\in \Xi}c_\alpha \left(\frac{|\alpha|!}{\alpha!}\right)^{1/2} (Z^\fa)^\alpha 1=0.
	\]
	Consequently, we may define an invertible linear operator $X:\fH\to \H_{\fa}$ as
 	\[
 		X\left( \sum_{\alpha\in \Xi}c_\alpha \left(\frac{|\alpha|!}{\alpha!}\right)^{1/2} N^\alpha\xi\right)=\sum_{\alpha\in \Xi}c_\alpha \left(\frac{|\alpha|!}{\alpha!}\right)^{1/2} (Z^{\fa})^\alpha 1
 	\]
	 for every collection of complex numbers $\{c_\alpha\in \bC: \alpha\in \Xi\}$. 
 	Then, we have
 	\[
 		\|X\|\leq\frac{  (L+1)\gamma }{(1-\eps \card \Xi)^{1/2}} , \quad \|X^{-1}\|\leq L+1.
 	\]
 	Finally, it is clear that $XNX^{-1}=Z^\fa$.
\end{proof}

Theorem \ref{T:sim} can be used to improve the conclusion of Theorem \ref{T:QSim} to similarity in some special cases. However, we omit the resulting statement as the required assumptions make it unwieldy, and leave the details to the interested reader. Moreover, we mention that it would be interesting to obtain a refinement of Theorem \ref{T:Decomposition} in the cyclic context, in the spirit of \cite[Theorem 5.7 and Corollary 5.8]{clouatreSIM1}.
 Theorem \ref{T:sim} could provide the basis of such a refinement, but at present the required technical assumptions once again blur the picture. This may be a reflection of the fact that the world of multivariate nilpotence is much richer than its univariate counterpart.
 Indeed, even in only two variables the annihilating ideals $\langle x_1^2,x_2^2\rangle$ and $\langle x_1^2,x_1x_2,x_2^2\rangle$ can support drastically different operator theoretic properties (see \cite[Examples 4 and 5]{CT2018}).
 This stands in contrast with the relative simplicity of the single-variable nilpotent case, where Theorem \ref{T:sim} has a much sharper (and simpler) analogue \cite[Proposition 5.6]{clouatreSIM1}.

\bibliographystyle{plain}
\bibliography{biblio_spectraldecomp.bib}

\end{document}